%% file: cvi-constrained-yamabe.tex
\documentclass{amsart}
\input{header}

\begin{document}

\title[Yamabe problems for polydifferential operators]{Yamabe problems for formally self-adjoint, conformally covariant,  polydifferential operators}
\author{Jeffrey S.\ Case}
\address{Department of Mathematics \\ Penn State University \\ University Park, PA 16802 \\ USA}
\email{jscase@psu.edu}
%\date{\today}
\keywords{conformally invariant operator, conformally variational invariant, Yamabe problem}
\subjclass[2020]{Primary 58J70; Secondary 53C18, 53C21, 58E11}
\begin{abstract}
 Formally self-adjoint, conformally covariant, polydifferential operators provide a general framework for studying variational problems, such as prescribing the scalar, $Q$-, or $\sigma_2$-curvatures, within a conformal class.
 We describe recent progress on Yamabe problems for such operators, including uniqueness results on the sphere and nonuniqueness results in general.
 We also highlight a number of open questions related to these operators, some of which constitute a possible blueprint for the general solution of the Yamabe problem for polydifferential operators.
\end{abstract}
\maketitle

\input{introduction}
\input{bg}
\input{ambient}
\input{frank-lieb}
\input{examples}
\input{nonuniqueness}
\input{future}

\section*{Acknowledgements}
This paper is based on joint works with Jo{\~a}o Henrique Andrade, Opal Cieslak, Yueh-Ju Lin, Paolo Piccione, Yi Wang, Juncheng Wei, Zetian Yan, and Wei Yuan.
I thank them for all of their contributions and our many insightful conversations.

This work was partially supported by a Simons Foundation Collaboration Grant for Mathematicians, ID 524601, and by the National Science Foundation under Award No. DMS-
2505606.

\bibliography{../Bibliography/bib}
\end{document}

%% file: header.tex
\usepackage{amsmath}
\usepackage{amssymb}
\usepackage{amsthm}
\usepackage[msc-links,abbrev]{amsrefs}
\usepackage{hyperref}
\usepackage{mathrsfs}
\usepackage{mathtools}
\usepackage{slashed}
\usepackage{tikz-cd}
\usepackage{enumitem}

\setenumerate{label=(\roman*)}

\DeclareMathOperator{\Id}{Id}

\DeclareMathOperator{\tr}{tr}
\DeclareMathOperator{\Vol}{Vol}
\DeclareMathOperator{\dvol}{dV}

\DeclareMathOperator{\Ric}{Ric}

\DeclareMathOperator{\Rm}{Rm}

\DeclareMathOperator{\Conf}{Conf}

\DeclareMathOperator{\Int}{Int}

\newcommand{\defn}[1]{{\boldmath\bfseries#1}}

\newcommand{\Diff}{\mathrm{Diff}}

\newcommand{\cd}{\widetilde{d}}
\newcommand{\cg}{\widetilde{g}}

\newcommand{\cf}{\widetilde{f}}

\newcommand{\ct}{\widetilde{t}}
\newcommand{\cu}{\widetilde{u}}
\newcommand{\cv}{\widetilde{v}}
\newcommand{\cw}{\widetilde{w}}
\newcommand{\cx}{\widetilde{x}}
\newcommand{\cy}{\widetilde{y}}

\newcommand{\cC}{\widetilde{C}}
\newcommand{\cD}{\widetilde{D}}

\newcommand{\cI}{\widetilde{I}}

\newcommand{\cL}{\widetilde{L}}
\newcommand{\cM}{\widetilde{M}}

\newcommand{\cpi}{\widetilde{\pi}}

\newcommand{\cnabla}{\widetilde{\nabla}}
\newcommand{\cdelta}{\widetilde{\delta}}
\newcommand{\cDelta}{\widetilde{\Delta}}

\newcommand{\cmE}{\widetilde{\mathcal{E}}}

\newcommand{\cmG}{\widetilde{\mathcal{G}}}

\newcommand{\hg}{\widehat{g}}

\newcommand{\hM}{\widehat{M}}

\newcommand{\lv}{\lvert}
\newcommand{\rv}{\rvert}
\newcommand{\lV}{\lVert}
\newcommand{\rV}{\rVert}

\newcommand{\mE}{\mathcal{E}}
\newcommand{\mF}{\mathcal{F}}
\newcommand{\mG}{\mathcal{G}}

\newcommand{\mI}{\mathcal{I}}

\newcommand{\kD}{\mathfrak{D}}

\newcommand{\kc}{\mathfrak{c}}

\newcommand{\bN}{\mathbb{N}}

\newcommand{\bR}{\mathbb{R}}

\newcommand{\sA}{\mathscr{A}}
\newcommand{\sB}{\mathscr{B}}
\newcommand{\sC}{\mathscr{C}}

\def\sideremark#1{\ifvmode\leavevmode\fi\vadjust{\vbox to0pt{\vss
 \hbox to 0pt{\hskip\hsize\hskip1em
 \vbox{\hsize3cm\tiny\raggedright\pretolerance10000
 \noindent #1\hfill}\hss}\vbox to8pt{\vfil}\vss}}}

\newcommand{\suchthat}{\mathrel{}:\mathrel{}}

\newtheorem{theorem}{Theorem}[section]
\newtheorem{proposition}[theorem]{Proposition}
\newtheorem{lemma}[theorem]{Lemma}

\theoremstyle{definition}
\newtheorem{definition}[theorem]{Definition}

\newtheorem{objective}[theorem]{Objective}
\newtheorem{conjecture}[theorem]{Conjecture}
\newtheorem{example}[theorem]{Example}

\theoremstyle{remark}
\newtheorem{remark}[theorem]{Remark}

\numberwithin{equation}{section}

%% file: introduction.tex
\section{Introduction}
\label{sec:intro}

Variational problems play a fundamental role in geometry and physics.
The key motivating example for this paper is the Yamabe Problem~\cite{LeeParker1987}, which asks to conformally change a metric to have constant scalar curvature.
Equivalently\footnote{
 The metric $u^{4/(n-2)}g$ has constant scalar curvature $\lambda$ if and only if $u$ is a smooth positive solution of PDE~\eqref{eqn:yamabe-pde}.
}, one seeks positive smooth solutions of the semilinear PDE\footnote{
 Our convention is that $\Delta \geq 0$ on functions.
}
\begin{equation}
 \label{eqn:yamabe-pde}
 \Delta u + \frac{n-2}{4(n-1)}Ru = \frac{n-2}{4(n-1)} \lambda \lv u \rv^{\frac{4}{n-2}}u .
\end{equation}
Solutions to the Yamabe Problem always exist on compact manifolds;
they can be constructed by minimizing the volume-normalized total scalar curvature functional~\cites{Yamabe1960,Aubin1976,Schoen1984,Trudinger1968} or by using min-max methods to find (possibly) high energy solutions~\cites{Bahri1993,BahriBrezis1996}.
Schoen~\cite{Schoen1989} exhibited pairwise nonhomothetic\footnote{
 Two metrics $g_1$ and $g_2$ are \defn{homothetic} if there is a diffeomorphism $\Phi$ and a constant $c>0$ such that $g_1 = c^2\Phi^\ast g_2$, and \defn{nonhomothetic} otherwise.
 For example, the $(n+2)$-parameter family of solutions to PDE~\eqref{eqn:yamabe-pde} on the round $n$-sphere are all homothetic~\cite{Obata1971}.}
solutions to the Yamabe Problem on $S^1 \times S^{n-1}$ by studying $S^{n-1}$-symmetric solutions of PDE~\eqref{eqn:yamabe-pde};
this nonuniqueness result was substantially generalized by Bettiol and Piccione~\cite{BettiolPiccione2018} via a covering argument.
These arguments produce solutions of PDE~\eqref{eqn:yamabe-pde} that are not energy minimizers.
Solutions to the Yamabe Problem need not exist on noncompact manifolds~\cites{Pohozaev1965,Jin1988}, though they do exist on manifolds whose behavior at infinity is suitably controlled (e.g.\ \cites{AvilesMcOwen1988,MazzeoPacard1999}).
Lifting the metrics produced by Schoen and Bettiol--Piccione to the universal cover shows that solutions to the Yamabe Problem on noncompact manifolds are also generally not unique.

The purpose of this paper is to describe a general framework for studying Yamabe problems for variational scalar invariants in conformal geometry.
These problems include both semilinear problems---such as for the $Q$-curvature (cf.\ \cite{HangYang2016l})---and fully nonlinear problems---such as for the $\sigma_k$-curvatures on locally conformally flat manifolds (cf.\ \cite{ShengTrudingerWang2012}).
To simultaneously emphasize the geometric and analytic aspects of this framework, we focus on Yamabe problems for formally self-adjoint, conformally covariant, polydifferential operators.
This paper is primarily a survey, though its streamlined discussion of examples, its focus on constrained polydifferential operators  (see Definition~\ref{defn:constrained-operator}), and the lemmas which develop their basic properties are new.
We also raise some conjectures and objectives that we expect to drive future progress towards the solution of these Yamabe problems.

A \defn{formally self-adjoint, conformally covariant, polydifferential operator} of \defn{rank} $r \in \bN$ on $n$-manifolds is a function $D$ that assigns to each Riemannian $n$-manifold $(M,g)$ a linear operator
\begin{equation*}
 D^g \colon C^\infty(M)^{\otimes(r-1)} \to C^\infty(M)
\end{equation*}
such that
\begin{enumerate}
 \item $D^g(u_1 \otimes \dotsm \otimes u_{r-1})$ is expressed by a universal\footnote{
  A linear combination is \defn{universal} if the coefficients are independent of $(M,g)$.}
 linear combination of complete contractions of tensor products of covariant derivatives of the Riemann curvature tensor, covariant derivatives of the functions $u_1,\dotsc,u_{r-1}$, and the inverse metric;
 \item the \defn{Dirichlet form} $\kD^g \colon C_0^\infty(M)^{\otimes r} \to \bR$ determined\footnote{
  Here $C_0^\infty(M)$ is the set of compactly-supported smooth functions on $M$.
 }
 by
 \begin{equation*}
  \kD^g( u_1 \otimes \dotsm \otimes u_{r} ) := \int_M u_1 D^g( u_2 \otimes \dotsm \otimes u_{r} ) \dvol_g
 \end{equation*}
 is symmetric; and
 \item there is a weight $w \in \bR$ such that if $\hg = e^{2\Upsilon}g$, then
 \begin{equation*}
  \kD^{\hg} (e^{w\Upsilon}u_1 \otimes \dotsm \otimes e^{w\Upsilon}u_{r} ) = \kD^g ( u_1 \otimes \dotsm \otimes u_{r} ) .
 \end{equation*}
\end{enumerate}
All tensor products in this paper are over $\bR$.
These assumptions imply that there is a constant $-2k \in \{ 0 , -2 , -4, \dotsc \}$, called the \defn{homogeneity} of $D$, such that $D^{c^2g} = c^{-2k}D^g$ for all Riemannian $n$-manifolds $(M,g)$ and all constants $c>0$.
Moreover, $rw = 2k - n$, and hence the weight is determined by the dimension, rank, and homogeneity of $D$.
The most familiar examples are the GJMS operators discovered by Graham, Jenne, Mason, and Sparling~\cite{GJMS1992};
they have rank $2$.
See Section~\ref{sec:bg} for further discussion, and see Table~\ref{table:known-results} and Section~\ref{sec:ambient} for examples.

Let $D$ be a formally self-adjoint, conformally covariant, polydifferential operator of rank $r$ and homogeneity $-2k \not= -n$ on $n$-manifolds.
Define $I$ by
\begin{equation}
 \label{eqn:associated-operator}
 D^g( 1^{\otimes(r-1)} ) = \left( \frac{n-2k}{r} \right)^{r-1} I^g .
\end{equation}
Then $I^{c^2g} = c^{-2k}I^g$ for all constants $c>0$ and
\begin{equation*}
 \left. \frac{d}{dt} \right|_{t=0} \frac{1}{n-2k}\int_M I^{e^{2tu}g} \dvol_{e^{2tu}g} = \int_M uI^g \dvol_g .
\end{equation*}
In particular, $I$ is a CVI\footnote{
 A \defn{CVI}, or conformally variational invariant, is a homogeneous scalar Riemannian invariant $I$ that is the $L^2$-gradient of a Riemannian functional~\cite{CaseLinYuan2016}.
}.
Every CVI arises this way~\cite{CaseLinYuan2018b}:

\begin{theorem}
 \label{thm:cly}
 Let $I$ be a CVI of homogeneity $-2k \not= -n$ on $n$-manifolds.
 There is a minimal integer $r \leq 2k$ for which there exists a formally self-adjoint, conformally covariant, polydifferential operator $D$ of rank $r$ satisfying Condition~\eqref{eqn:associated-operator}.
 Moreover, $D$ is unique among all such operators of rank $r$.
\end{theorem}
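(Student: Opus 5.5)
The strategy is to build $D$ by repeatedly polarizing the conformal variation of the functional whose gradient is $I$, and to use this same construction, with the conformal transformation law, to get uniqueness. By assumption there is a Riemannian functional $\mathcal{S}(g)=\frac{1}{n-2k}\int_M I^g\dvol_g$ whose conformal gradient is $I$. Given a CVI $I$ of homogeneity $-2k\neq -n$, set $w=2k-n$ and write metrics in the conformal class as $\hg = u^{-2r/(n-2k)\cdot (1/r)\cdots}$; more simply, for each candidate rank $r$ use the parametrization $\hg = u^{4/(n-2k)\cdot(2/r)}g$. Then $u^{2n/(n-2k)\cdot 2/r}\dvol$ scaling is chosen so that the nonlinear functional
\[
 F_r(u) := \int_M \hat I\, \dvol_{\hg}
\]
is homogeneous of degree $r$ in $u$. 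The plan is to show that $F_r$ is a polynomial in $u$ and its derivatives, i.e. a polydifferential form of degree $r$, for some $r\le 2k$. Polarizing then gives a symmetric $r$-linear form $\kD^g$, and hence an operator $D^g$. Symmetry holds by construction. Conformal covariance with weight $w$, where $rw=2k-n$, follows because $F_r$ depends only on $\hg$. Condition~\eqref{eqn:associated-operator} follows from evaluating at $u=1$ and using the first-variation identity for $I$, which fixes the constant $((n-2k)/r)^{r-1}$.

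The main step is polynomiality. I would expand $\hat I$ in terms of $\Upsilon=\log$ of the conformal factor. Since $I$ is a natural invariant of homogeneity $-2k$, $e^{2k\Upsilon}\hat I$ is a universal polynomial in $\nabla^j\Upsilon$. Each monomial has total derivative count at most $2k$, so it contains at most $2k$ factors of $\nabla\Upsilon$-type terms. Substituting $e^{\Upsilon}=u^{2/r\cdot\ldots}$ and clearing denominators, one obtains terms $u^{r-m}\prod(\nabla^{j_i}u)$ divided by powers of $u$. The task is to show these negative powers cancel once $r$ is large enough, and that $r=2k$ always works: each derivative factor costs one power of $u$, and at most $2k$ factors appear. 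Because $I$ is variational, the total integrand is, after integrating by parts, genuinely a polynomial; without this, one gets only a rational expression. I expect this to be the main obstacle. It requires the variational structure to control the $\Upsilon$-expansion, likely via the ambient/Fefferman–Graham description or a direct argument that the conformal linearization $\frac{d}{dt}e^{\ldots}\hat I$ is formally self-adjoint, iterated. The minimal such $r$ exists once some $r\le 2k$ works.

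For uniqueness, suppose $D_1$ and $D_2$ both have rank $r$ and satisfy~\eqref{eqn:associated-operator}. By covariance, $\kD_i^g(u^{\otimes r})=\kD_i^{\hg}(1^{\otimes r})$ with $\hg=u^{-2/w}g$, and by~\eqref{eqn:associated-operator} this equals $((n-2k)/r)^{r-1}\int \hat I\,\dvol_{\hg}$. So $\kD_1$ and $\kD_2$ agree on all positive diagonal arguments $u^{\otimes r}$. Polynomiality in $u$ extends this to all $u$, and since both forms are symmetric, polarization gives $\kD_1=\kD_2$, hence $D_1=D_2$.
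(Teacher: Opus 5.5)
Your argument is correct, and it takes a different route from the paper's. The step you flag as the main obstacle is already settled by the power count you give.

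First, fix the parametrization. It should be $\hg = u^{2r/(n-2k)}g$ (that is, $u^{-2/w}g$, as you use in the uniqueness part). Then $e^{\Upsilon}=u^{r/(n-2k)}$ and $I^{\hg}\,\dvol_{\hg} = u^{r}\bigl(e^{2k\Upsilon}I^{\hg}\bigr)\dvol_g$. The bracket is a universal polynomial in $\nabla^{N}\mathrm{Rm}$ and $\nabla^{j}\Upsilon$, $j\geq 1$, in which at most $2k$ derivatives fall on $\Upsilon$. Each $\nabla^j\log u$ is a sum of terms $u^{-m}\nabla^{a_1}u\otimes\dotsm\otimes\nabla^{a_m}u$ with $m\leq j$. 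So for $r=2k$ the integrand is already pointwise a polynomial in $u$ and its derivatives.

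This needs no integration by parts, no ambient metric, and no variational input. The CVI hypothesis enters only through the identity $\frac{d}{dt}\int I^{e^{2tv}g}\,\dvol_{e^{2tv}g}=(n-2k)\int vI^g\,\dvol_g$. That identity gives Condition~\eqref{eqn:associated-operator}, and more generally $D^g(u^{\otimes(r-1)}) = \bigl(\frac{n-2k}{r}\bigr)^{r-1}u^{\frac{(r-1)n+2k}{n-2k}}I^{\hg}$ for $u>0$. Your uniqueness argument is fine and in fact works at every rank.

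The paper works infinitesimally rather than globally. It defines $D^r_j$ by iterated weighted conformal linearization of $I$ and quotes Case--Lin--Yuan for two facts:
\begin{enumerate}
 \item each $D^r_j$ is formally self-adjoint;
 \item $D^{2k}_{2k}=0$, which is deduced from $D^{2k}_{2k}(1\otimes v^{\otimes(2k-1)})=0$ by a contraction argument.
\end{enumerate}
It then takes $r$ minimal with $D^r_r=0$ and sets $D=D^r_{r-1}/(r-1)!$. Uniqueness comes from an inductive identity expressing $D^r_j(v^{\otimes j})$ through $\cD(v^{\otimes j}\otimes 1^{\otimes(r-1-j)})$.

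The two constructions describe the same object. $D^r_j$ equals $\bigl(\frac{n-2k}{r}\bigr)^j$ times the $j$-th derivative at $u=1$ of $u\mapsto u^{\frac{(r-1)n+2k}{n-2k}}I^{u^{2r/(n-2k)}g}$, and pairing this map with $u$ gives exactly your integrand. Likewise, your identity $\kD^g(u^{\otimes r})=\bigl(\frac{n-2k}{r}\bigr)^{r-1}\int I^{\hg}\,\dvol_{\hg}$ is the integrated form of the paper's inductive one.

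Your route is more self-contained. Formal self-adjointness is automatic from polarization, and $r\leq 2k$ is a simple power count, so neither cited lemma is needed. The paper's route is algorithmic and gives an intrinsic test for minimal rank, namely $D^r_r=0$. Yours gives this only implicitly: the minimal rank is the least $r$ for which $u\mapsto\int I^{u^{2r/(n-2k)}g}\,\dvol_{u^{2r/(n-2k)}g}$ is the integral of a natural polynomial expression in $u$.
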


The first statement of Theorem~\ref{thm:cly} is due to Case et al.~\cite{CaseLinYuan2018b}*{Theorem~1.6}, and the second is new.
There is a corresponding statement when $n=2k$~\cite{CaseLinYuan2018b}.

A formally self-adjoint, conformally covariant, polydifferential operator $D$ has \defn{minimal rank} if it is an operator produced by Theorem~\ref{thm:cly}.
There are such operators that do not have minimal rank;
e.g.\ Example~\ref{ex:nonminimal-gjms-operator}.

The proof of Theorem~\ref{thm:cly} is algorithmic.
Since there is also an algorithm producing a spanning set for the space of CVIs~\cite{CaseLinYuan2016}*{Section~8}, one can, in principle, produce a spanning set for the space of formally self-adjoint, conformally covariant, polydifferential operators of minimal rank.
However, this construction is impractical for studying many analytic properties discussed in this paper.
Objective~\ref{o:classification} seeks an alternative description that better captures these properties.

Let $D$ and $I$ be as in Condition~\eqref{eqn:associated-operator}.
Then $u \in C^\infty(M)$ solves
\begin{equation}
 \label{eqn:pde}
 D^g(u^{\otimes(r-1)}) = \left(\frac{n-2k}{r}\right)^{r-1}\lambda \lv u \rv^{\frac{(r-2)n+4k}{n-2k}}u
\end{equation}
if and only if $\hg := \lv u \rv^{2r/(n-2k)}g$ satisfies
\begin{equation*}
 I^{\hg} = \left( \lv u \rv / u \right)^r\lambda \quad\text{on}\quad \left\{ x \in M \suchthat u(x) \not= 0 \right\} .
\end{equation*}
Formal self-adjointness and conformal covariance make PDE~\eqref{eqn:pde} variational:

\begin{lemma}
 \label{lem:euler-equation}
 Let $D$ be a formally self-adjoint, conformally covariant, polydifferential operator of rank $r$ and homogeneity $-2k \not= -n$ on $n$-manifolds.
 Given a compact Riemannian $n$-manifold $(M,g)$, define $\mF^g \colon C^\infty(M) \setminus \{ 0 \} \to \bR$ by
 \begin{equation}
  \label{eqn:defn-mF}
  \mF^g(u) := \left. \kD^g\bigl(u^{\otimes r}\bigr) \middle/ \lV u \rV_{r^\ast}^{r} \right. ,
 \end{equation}
 where $r^\ast := rn/(n-2k)$ and $\lV \cdot \rV_{r^\ast}$ denotes the $L^{r^\ast}$-norm.
 Then $u$ is a critical point of $\mF^g$ if and only if $u$ solves PDE~\eqref{eqn:pde} for some $\lambda \in \bR$.
\end{lemma}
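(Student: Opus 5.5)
The plan is to compute the first variation of $\mF^g$ at $u$ in an arbitrary direction $v \in C^\infty(M)$ and read off the Euler--Lagrange equation.

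\emph{Numerator.} By multilinearity, $t \mapsto \kD^g\bigl((u+tv)^{\otimes r}\bigr)$ is a polynomial in $t$. Its derivative at $t=0$ is the sum of $r$ terms, each obtained by replacing one factor $u$ with $v$. Because $\kD^g$ is symmetric, all $r$ terms are equal. Hence
\begin{equation*}
 \left. \frac{d}{dt} \right|_{t=0} \kD^g\bigl((u+tv)^{\otimes r}\bigr) = r\,\kD^g\bigl(v \otimes u^{\otimes(r-1)}\bigr) = r\int_M v\, D^g\bigl(u^{\otimes(r-1)}\bigr) \dvol_g .
\end{equation*}
This is the only place formal self-adjointness enters. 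Since $M$ is compact, $C^\infty(M)=C_0^\infty(M)$, so the Dirichlet form is defined on all of these functions.

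\emph{Denominator.} We have $r^\ast = rn/(n-2k)$. Note $r^\ast>1$ in the relevant range ($r\geq 2$ and $2k<n$), and we should remark that $r^\ast>1$ is all that is used. Then $\lv u+tv\rv^{r^\ast}$ is $C^1$ in $t$, and
\begin{equation*}
 \left. \frac{d}{dt} \right|_{t=0} \lV u+tv \rV_{r^\ast}^{r} = r\,\lV u\rV_{r^\ast}^{r-r^\ast}\int_M \lv u\rv^{r^\ast-2}u\,v \dvol_g .
\end{equation*}
Here $\lv u\rv^{r^\ast-2}u$ is continuous because $r^\ast>1$. Also $r^\ast-2 = \frac{(r-2)n+4k}{n-2k}$, which matches the exponent in PDE~\eqref{eqn:pde}.

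\emph{Euler--Lagrange equation.} By the quotient rule, $u\neq0$ is critical if and only if, for all $v$,
\begin{equation*}
 \int_M v\Bigl(D^g(u^{\otimes(r-1)}) - \mF^g(u)\lV u\rV_{r^\ast}^{-r^\ast}\lv u\rv^{r^\ast-2}u\Bigr)\dvol_g = 0 .
\end{equation*}
Since $v$ is arbitrary and the bracket is continuous, this holds exactly when
\begin{equation*}
 D^g(u^{\otimes(r-1)}) = \mu\lv u\rv^{r^\ast-2}u, \qquad \mu := \mF^g(u)\lV u\rV_{r^\ast}^{-r^\ast} .
\end{equation*}
Setting $\lambda := \bigl(\frac{r}{n-2k}\bigr)^{r-1}\mu$ gives PDE~\eqref{eqn:pde}.

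\emph{Converse.} Suppose $u$ solves PDE~\eqref{eqn:pde} for some $\lambda$. Multiply the equation by $u$ and integrate to get
\begin{equation*}
 \kD^g(u^{\otimes r}) = \left(\frac{n-2k}{r}\right)^{r-1}\lambda\lV u\rV_{r^\ast}^{r^\ast}.
\end{equation*}
So the constant $\lambda$ is forced to equal $\bigl(\frac{r}{n-2k}\bigr)^{r-1}\mF^g(u)\lV u\rV_{r^\ast}^{-r^\ast}$. With this value the variation computed above vanishes for every $v$, so $u$ is critical.

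\emph{Expected obstacle.} The argument is essentially routine. The only delicate point is justifying differentiation of the $L^{r^\ast}$-norm when $u$ has zeros and $r^\ast<2$. This is handled by dominated convergence, using that $s\mapsto\lv s\rv^{r^\ast}$ is $C^1$ for $r^\ast>1$. Conformal covariance is not needed for the equivalence itself; it only explains why $r^\ast$ is the natural exponent making $\mF^g$ conformally invariant.
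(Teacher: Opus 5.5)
Your proposal follows the paper's proof: symmetry of $\kD^g$ gives the numerator's first variation $r\int_M v\,D^g(u^{\otimes(r-1)})\dvol_g$, the quotient rule gives the Euler--Lagrange equation, and the converse comes from testing the PDE against $u$. There is one algebra slip, harmless to the logic: in both your first-variation identity and your converse, the coefficient of $\lv u\rv^{r^\ast-2}u$ should be $\kD^g(u^{\otimes r})\lV u\rV_{r^\ast}^{-r^\ast} = \mF^g(u)\lV u\rV_{r^\ast}^{r-r^\ast}$, not $\mF^g(u)\lV u\rV_{r^\ast}^{-r^\ast}$.
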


Since PDE~\eqref{eqn:pde} is nonlinear in general, it is typically not elliptic for all $u$.
This motivates one to associate constraints to $D$:

\begin{definition}
 \label{defn:constrained-operator}
 A \defn{constrained polydifferential operator} of rank $r$ and homogeneity $-2k$ is a pair $(D,\sC)$ of a formally self-adjoint, conformally covariant, polydifferential operator $D$ of rank $r$ and homogeneity $-2k$ on $n$-manifolds and a (possibly empty) finite set $\sC$ of symmetric conformally covariant polydifferential operators such that for each $C \in \sC$ it holds that
 \begin{equation*}
  C^{e^{2\Upsilon}g}\bigl(u^{\otimes (r(C)-1)}\bigr) = e^{b(C)\Upsilon} C^g\left( \bigl( e^{\frac{n-2k}{r}\Upsilon}u \bigr)^{\otimes(r(C)-1)} \right)
 \end{equation*}
 for all Riemannian $n$-manifolds $(M,g)$ and all $\Upsilon,u \in C^\infty(M)$, where $r(C)$ is the rank of $C$ and $b(C) \in \bR$ is determined by $C$.
\end{definition}

See Section~\ref{sec:bg} for an explanation of the terminology used in this definition.
See Table~\ref{table:known-results} and Section~\ref{sec:examples} for examples of constrained polydifferential operators.

The key point of Definition~\ref{defn:constrained-operator} is the conformal transformation property of elements of the constraint set $\sC$.
This has the effect that constrained polydifferential operators determine conformally invariant cones in $C^\infty(M)$:

\begin{lemma}
 \label{lem:geometric-cone}
 Let $(D,\sC)$ be a constrained polydifferential operator of rank $r$ and homogeneity $-2k$ on $n$-manifolds.
 Given a Riemannian $n$-manifold $(M,g)$, set
 \begin{equation}
  \label{eqn:geometric-cone}
  U_{\sC}^g := \left\{ u \in C^\infty(M) \suchthat \forall C \in \sC , C^g\bigl(u^{\otimes(r(C)-1)}\bigr) > 0 \right\} .
 \end{equation}
 The sets $U_\sC^g$ have the following properties:
 \begin{enumerate}
  \item If $u \in U_{\sC}^g$ and $c>0$, then $cu \in U_{\sC}^g$.
  \item If $\hg = e^{2\Upsilon}g$, then $U_{\sC}^{\hg} = e^{-\frac{n-2k}{r}\Upsilon}U_{\sC}^g$.
  \item If $f \colon \hM \to M$ is a local diffeomorphism, then $f^\ast(U_{\sC}^g) \subseteq U_{\sC}^{f^\ast g}$.
 \end{enumerate}
\end{lemma}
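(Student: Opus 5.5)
The plan is to verify each property directly from the definition~\eqref{eqn:geometric-cone}, using only multilinearity of each $C \in \sC$, the conformal transformation law in Definition~\ref{defn:constrained-operator}, and the naturality of $C$ as a universal polydifferential expression.

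For (i), I use that each $C^g$ is multilinear of rank $r(C)$, so it is homogeneous of degree $r(C)-1$ on diagonal inputs:
\begin{equation*}
 C^g\bigl((cu)^{\otimes(r(C)-1)}\bigr) = c^{r(C)-1}C^g\bigl(u^{\otimes(r(C)-1)}\bigr) .
\end{equation*}
If $c>0$ then $c^{r(C)-1}>0$, so the strict inequality defining $U_\sC^g$ is preserved for every $C\in\sC$.

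For (ii), let $\hg = e^{2\Upsilon}g$ and take any $v \in C^\infty(M)$. I write $v = e^{-\frac{n-2k}{r}\Upsilon}u$ with $u := e^{\frac{n-2k}{r}\Upsilon}v$. The transformation law in Definition~\ref{defn:constrained-operator} gives
\begin{equation*}
 C^{\hg}\bigl(v^{\otimes(r(C)-1)}\bigr) = e^{b(C)\Upsilon}C^g\bigl(u^{\otimes(r(C)-1)}\bigr) .
\end{equation*}
Since $e^{b(C)\Upsilon}>0$ pointwise, $v\in U_\sC^{\hg}$ if and only if $u\in U_\sC^g$. This means exactly that $U_\sC^{\hg} = e^{-\frac{n-2k}{r}\Upsilon}U_\sC^g$, and the argument yields both inclusions at once.

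For (iii), I use that $C$ is given by a universal formula built from complete contractions of covariant derivatives of the curvature and of the inputs. Such formulas are natural under local isometries, and a local diffeomorphism $f$ is a local isometry from $(\hM, f^\ast g)$ to $(M,g)$. Hence
\begin{equation*}
 C^{f^\ast g}\bigl((f^\ast u)^{\otimes(r(C)-1)}\bigr) = f^\ast\bigl(C^g(u^{\otimes(r(C)-1)})\bigr) ,
\end{equation*}
and positivity pulls back pointwise, so $f^\ast u \in U_\sC^{f^\ast g}$. Only an inclusion is claimed here, since $f$ need not be surjective onto a set where one could test the converse.

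I expect no step to be a real obstacle. The only point needing care is in (iii): I should state explicitly that naturality under local isometries follows from the universality condition on polydifferential operators, which Section~\ref{sec:bg} presumably records.
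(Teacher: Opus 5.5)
Your proposal is correct and follows the paper's proof essentially step for step: homogeneity of each multilinear $C$ for (i), the transformation law of Definition~\ref{defn:constrained-operator} together with positivity of $e^{b(C)\Upsilon}$ for (ii), and naturality of $C$ under the local isometry $f\colon(\hM,f^\ast g)\to(M,g)$ for (iii). In (ii), your substitution $u := e^{\frac{n-2k}{r}\Upsilon}v$ is slightly cleaner than the paper's presentation, which writes out only one inclusion before asserting equality, because it gives both inclusions at once.
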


In particular, the \defn{constrained cone} $U_\sC$, which is an assignment to each Riemannian $n$-manifold of a subset of its space of smooth functions, is a geometric cone of weight $-\frac{n-2k}{r}$ in the sense of Andrade et al.~\cite{AndradeCasePiccioneWei2023}.
The last two properties in Lemma~\ref{lem:geometric-cone} give rise to an action of the conformal group\footnote{
 The \defn{conformal group} $\Conf(M,\kc)$ is the group, under composition, of all diffeomorphisms $\Phi \in \Diff(M)$ such that $\Phi^\ast g \in \kc$ for some, and hence any, $g \in \kc$.
}
on $U_\sC$ that fixes the set of solutions of PDE~\eqref{eqn:pde}:

\begin{lemma}
 \label{lem:action}
 Let $(D,\sC)$ be a constrained polydifferential operator of rank $r$ and homogeneity $-2k > -n$ on $n$-manifolds.
 Fix a Riemannian $n$-manifold $(M,g)$ and set $w := -(n-2k)/r$.
 Then
 \begin{equation}
  \label{eqn:action}
  u \cdot \Phi := \lv J_\Phi \rv^{-w/n} \Phi^\ast u
 \end{equation}
 defines\footnote{
  The Jacobian determinant $\lv J_\Phi \rv$ is defined by $\dvol_{\Phi^\ast g} = \lv J_\Phi \rv \dvol_g$.
 } a right action of $\Conf(M,\kc)$ of $C^\infty(M)$ and $U_{\sC}^g$ is invariant under this action.
 Moreover, if $u \in U_{\sC}^g$ solves PDE~\eqref{eqn:pde}, then $u \cdot \Phi$ also solves PDE~\eqref{eqn:pde}.
\end{lemma}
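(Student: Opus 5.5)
The plan is to reduce everything to Lemma~\ref{lem:geometric-cone} together with the naturality of $D$ under diffeomorphisms. Fix $\Phi \in \Conf(M,\kc)$ and write $\Phi^\ast g = e^{2\Upsilon_\Phi}g$. Since $\dvol_{\Phi^\ast g} = e^{n\Upsilon_\Phi}\dvol_g$, we have $\lv J_\Phi \rv = e^{n\Upsilon_\Phi}$. Hence $u\cdot\Phi = e^{-w\Upsilon_\Phi}\Phi^\ast u = e^{\frac{n-2k}{r}\Upsilon_\Phi}\Phi^\ast u$.

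\emph{Right action.} The identity acts trivially because $J_{\Id}=1$. For composition, the chain rule for pullbacks of volume forms gives
\[
\lv J_{\Phi\circ\Psi}\rv = \Psi^\ast\lv J_\Phi\rv \cdot \lv J_\Psi\rv ,
\]
since $\dvol_{(\Phi\circ\Psi)^\ast g} = \Psi^\ast\bigl(\lv J_\Phi\rv\dvol_g\bigr) = \Psi^\ast\lv J_\Phi\rv \,\lv J_\Psi\rv\dvol_g$. Then
\[
(u\cdot\Phi)\cdot\Psi = \lv J_\Psi\rv^{-w/n}\Psi^\ast\bigl(\lv J_\Phi\rv^{-w/n}\Phi^\ast u\bigr) = \lv J_{\Phi\circ\Psi}\rv^{-w/n}(\Phi\circ\Psi)^\ast u = u\cdot(\Phi\circ\Psi).
\]
Linearity of the action in $u$ is clear.

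\emph{Invariance of $U_\sC^g$.} Let $u\in U_\sC^g$. By Lemma~\ref{lem:geometric-cone}(iii) applied to the diffeomorphism $\Phi$, we get $\Phi^\ast u \in U_\sC^{\Phi^\ast g} = U_\sC^{e^{2\Upsilon_\Phi}g}$. By Lemma~\ref{lem:geometric-cone}(ii), this set equals $e^{-\frac{n-2k}{r}\Upsilon_\Phi}U_\sC^g$. Therefore $e^{\frac{n-2k}{r}\Upsilon_\Phi}\Phi^\ast u = u\cdot\Phi \in U_\sC^g$.

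\emph{Solutions.} Here I will use the pointwise covariance of $D$ itself. Conditions (ii)--(iii) in the definition, together with symmetry, give
\[
D^{e^{2\Upsilon}g}\bigl(v^{\otimes(r-1)}\bigr) = e^{(w-2k)\Upsilon}\,D^g\bigl((e^{-w\Upsilon}v)^{\otimes(r-1)}\bigr).
\]
This follows by testing $\kD$ against $u_1$ and using $\dvol_{\hg}=e^{n\Upsilon}\dvol_g$, which yields exponent $-w-n=w-2k+\dots$; I will check the bookkeeping using $rw=2k-n$. Naturality under diffeomorphisms gives $D^{\Phi^\ast g}\bigl((\Phi^\ast u)^{\otimes(r-1)}\bigr) = \Phi^\ast D^g(u^{\otimes(r-1)})$.

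Take $v=\Phi^\ast u$ and $\Upsilon=\Upsilon_\Phi$, and combine the two identities. We obtain $D^g\bigl((u\cdot\Phi)^{\otimes(r-1)}\bigr) = e^{(2k-w)\Upsilon_\Phi}\Phi^\ast D^g(u^{\otimes(r-1)})$. If $u$ solves PDE~\eqref{eqn:pde}, this becomes
\[
D^g\bigl((u\cdot\Phi)^{\otimes(r-1)}\bigr) = \left(\tfrac{n-2k}{r}\right)^{r-1}\lambda\, e^{(2k-w)\Upsilon_\Phi}\lv\Phi^\ast u\rv^{p}\Phi^\ast u ,
\qquad p=\tfrac{(r-2)n+4k}{n-2k}.
\]
On the other hand, $\lv u\cdot\Phi\rv^{p}(u\cdot\Phi) = e^{-w(p+1)\Upsilon_\Phi}\lv\Phi^\ast u\rv^p\Phi^\ast u$. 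So it remains to verify $-w(p+1)=2k-w$, i.e.\ $-wp=2k$. Indeed $-wp = \frac{(r-2)n+4k}{r}$, and this equals $2k$ only after the correct exponent check. The identity I should use is $r^\ast-1 = p+1$, and with $w=-(n-2k)/r$ the relation $-w(p+1)=w-2k$ should hold. The main obstacle is this exponent bookkeeping, including the correct sign in the pointwise covariance formula for $D$. I will carry it out carefully from $rw=2k-n$, and the hypothesis $-2k>-n$ guarantees $w<0$ so that all powers are well defined.
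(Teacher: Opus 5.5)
Your proofs that $u\cdot\Phi$ defines a right action and that $U_{\sC}^g$ is invariant are correct and follow the paper. You also make explicit the cocycle identity $\lv J_{\Phi\circ\Psi}\rv = \Psi^\ast\lv J_\Phi\rv\,\lv J_\Psi\rv$, which the paper leaves implicit. For the last claim you take the paper's route: naturality plus conformal covariance of $D$, followed by matching Jacobian factors. However, the computation as written is wrong and you leave it unresolved, so that claim is not yet proved.

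The error is in the pointwise covariance law. Testing condition (iii) of the definition against an arbitrary $u_1$ gives
\[
 D^{e^{2\Upsilon}g}\bigl(v^{\otimes(r-1)}\bigr) = e^{-(n+w)\Upsilon}D^g\bigl((e^{-w\Upsilon}v)^{\otimes(r-1)}\bigr) ,
\]
as you note in passing. Since $rw = 2k-n$, the exponent is $-(n+w) = (r-1)w - 2k = -\frac{(r-1)n+2k}{r}$, not $w-2k$; the two agree only when $r=2$.

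Your exponent leads you to the target identity $-wp = 2k$. Your own computation $-wp = \frac{(r-2)n+4k}{r}$ shows this is false for $r \neq 2$. The replacement relation $-w(p+1) = w-2k$ is also false, since it would force $n=-2k$.

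With the correct exponent one gets $D^g\bigl((u\cdot\Phi)^{\otimes(r-1)}\bigr) = e^{(n+w)\Upsilon_\Phi}\Phi^\ast D^g\bigl(u^{\otimes(r-1)}\bigr)$. The identity to check is then $-w(p+1) = n+w$, i.e.\ $-wp = n+2w = \frac{(r-2)n+4k}{r}$, which is exactly the value you computed. Equivalently, $p+1 = \frac{(r-1)n+2k}{n-2k}$, so $\Phi^\ast D^g\bigl(u^{\otimes(r-1)}\bigr)$ and $\Phi^\ast\bigl(\lv u\rv^p u\bigr)$ both pick up the factor $\lv J_\Phi\rv^{-\frac{(r-1)n+2k}{rn}}$, which is how the paper finishes. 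The repair is one line, but it is needed: without it your displayed covariance formula and target identity are incorrect for every $r \neq 2$.
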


Note that the action~\eqref{eqn:action} depends on $(D,\sC)$ through the role of the weight $w$.

In this paper we focus on the existence of solutions of PDE~\eqref{eqn:pde} as \emph{minimizers} of a constrained variational problem:

\begin{definition}
 \label{defn:yamabe}
 Let $(D,\sC)$ be a constrained polydifferential operator of rank $r$ and homogeneity $-2k>-n$ on $n$-manifolds.
 The \defn{$(D,\sC)$-Yamabe constant} of a compact conformal $n$-manifold $(M,\kc)$ is
 \begin{equation}
  \label{eqn:yamabe-defn-by-normalization}
  Y_{(D,\sC)}(M,\kc) := \inf \left\{ \kD^g\bigl(u^{\otimes r}\bigr) \suchthat u \in U_{\sC}^g , \lV u \rV_{r^\ast} = 1 \right\} ,
 \end{equation}
 where $g \in \kc$ and $r^\ast := rn/(n-2k)$.
 
 The \defn{$(D,\sC)$-Yamabe Problem} is to find a positive $u \in U_{\sC}^g$ such that
 \begin{equation*}
  \kD^g\bigl(u^{\otimes r}\bigr) = Y_{(D,\sC)}(M,g) \quad\text{and}\quad \lV u \rV_{r^\ast} = 1 .
 \end{equation*}
\end{definition}

For an unconstrained polydifferential operator $D$, we call these the $D$-Yamabe constant and the $D$-Yamabe problem, respectively.

The properties of $D$ and $\sC$ ensure that $Y_{(D,\sC)}(M,\kc)$ is independent of the choice of metric $g \in \kc$.
By homogeneity,
\begin{equation}
 \label{eqn:yamabe-defn-by-homogeneity}
 Y_{(D,\sC)}(M,\kc) = \inf \left\{ \mF^g(u) \suchthat 0 \not= u \in U_{\sC}^g \right\} ,
\end{equation}
where $\mF$ is as in Lemma~\ref{lem:euler-equation} and $g \in \kc$.
If $u$ is a positive minimizer of $Y_{(D,\sC)}(M,\kc)$, then $u^{2r/(n-2k)}g$ has constant $I$-curvature for $I$ as in Equation~\eqref{eqn:associated-operator}.

We have not yet commented on analytic criteria that the constraints $\sC$ should satisfy.
One criterion should be that PDE~\eqref{eqn:pde} is elliptic for every $u \in U_{\sC}^g$.
Another criterion emerges from the $(D,\sC)$-Yamabe Problem on the \defn{standard conformal $n$-sphere} $(S^n,\kc_0)$;
i.e.\ on $S^n$ with its flat conformal structure:
Frank and Lieb~\cite{FrankLieb2012b} gave a rearrangement-free\footnote{
 Their strategy also works in the CR setting, where rearrangement methods are not available.
 See Footnote~\ref{cr-footnote} for further discussion of possible CR analogues of this paper.
}
proof of the classification of minimizers for the $L_{2k}$-Yamabe Problem---i.e.\ extremals for the sharp Sobolev inequality for the embedding $W^{k,2}(\bR^n) \hookrightarrow L^{2^\ast}(\bR^n)$---by using the second variation of the functional $\mF$ and a sharp spectral inequality for $L_{2k}$.
Case~\cite{Case2019fl} isolated the key property needed to extend their argument to polydifferential operators:

\begin{definition}
 \label{defn:frank-lieb}
 A constrained polydifferential operator $(D,\sC)$ of rank $r$ and homogeneity $-2k>-n$ on $n$-manifolds satisfies the \defn{Frank--Lieb Property} if
 \begin{enumerate}
  \item $1 \in U_{\sC}^{d\theta^2}$, where $(S^n,d\theta^2)$ is the round $n$-sphere; and
  \item if $u \in U_{\sC}^{d\theta^2}$, then
  \begin{equation}
   \label{eqn:fl-poincare}
   \kD^{d\theta^2}\bigl(u^{\otimes r}\bigr) \geq \frac{(r-1)(n-2k)}{2rk}\int_{S^n} \sum_{i=0}^{n} x^iu[D,x^i]_1\bigl( u^{\otimes(r-1)} \bigr) \dvol_{d\theta^2}
  \end{equation}
  with equality if and only if $u$ is constant.
 \end{enumerate}
\end{definition}

Here $[D,x]_1 \colon C^\infty(M)^{\otimes(r-1)} \to C^\infty(M)$ is the commutator
\begin{equation*}
 [D,x]_1(u_1 \otimes \dotsm \otimes u_{r-1}) := D( xu_1 \otimes u_2 \otimes \dotsm \otimes u_{r-1} ) - xD( u_1 \otimes \dotsm \otimes u_{r-1} )
\end{equation*}
and $x^0,\dotsc,x^n$ are the restrictions of the standard Cartesian coordinates to $S^n$.

The Frank--Lieb Property is a sufficient condition for the uniqueness of minimizers of the $(D,\sC)$-Yamabe Problem on the round $n$-sphere~\cite{Case2019fl}:

\begin{theorem}
 \label{thm:frank-lieb}
 Let $(D,\sC)$ be a constrained polydifferential operator of rank $r$ and homogeneity $-2k>-n$ on $n$-manifolds that satisfies the Frank--Lieb Property.
 If $u$ is a local minimizer of the functional $\mF^{d\theta^2} \colon U_{\sC}^{d\theta^2} \setminus \{ 0 \} \to \bR$ defined by Equation~\eqref{eqn:defn-mF}, then there is a constant $c \not= 0$ and a $\Phi \in \Conf(S^n)$ such that
 \begin{equation*}
  u = c \cdot \Phi .
 \end{equation*}
\end{theorem}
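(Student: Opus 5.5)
We follow the Frank--Lieb strategy: first use the conformal action to normalize the center of mass of a local minimizer, then use second-variation information along the coordinate functions to show that the normalized minimizer is constant.

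\emph{Normalization.} Let $u$ be a local minimizer of $\mF^{d\theta^2}$ on $U_\sC^{d\theta^2}\setminus\{0\}$. Since $U_\sC^{d\theta^2}$ is open in the $C^\infty$ topology, $u$ is a critical point. By Lemma~\ref{lem:euler-equation}, $u$ solves PDE~\eqref{eqn:pde} for some $\lambda$.

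By Lemma~\ref{lem:action} and the conformal invariance of $\mF$, the function $u\cdot\Phi$ is again a local minimizer in $U_\sC^{d\theta^2}$ for every $\Phi\in\Conf(S^n)$. The standard degree (Brouwer fixed point) argument of Frank--Lieb, applied to the center-of-mass map
\begin{equation*}
 \Phi \mapsto \int_{S^n} x\,\lv u\cdot\Phi\rv^{r^\ast}\dvol ,
\end{equation*}
restricted to the dilations toward each point of the ball, produces $\Phi$ with
\begin{equation*}
 \int_{S^n} x^i\lv u\cdot\Phi\rv^{r^\ast}\dvol=0 \quad\text{for all } i .
\end{equation*}
Replace $u$ by $u\cdot\Phi$. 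After this replacement it suffices to show that $u$ is constant.

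\emph{Second variation.} Test the minimality along $u_t=u+t\,x^iu$ for each $i$. These curves lie in $U_\sC$ for small $t$ because the cone is open. Local minimality gives $\frac{d^2}{dt^2}\mF(u_t)|_{t=0}\ge0$.

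Expand $\kD(u_t^{\otimes r})$ to second order using symmetry and multilinearity. The quadratic term is
\begin{equation*}
 \binom r2\,\kD\bigl((x^iu)^{\otimes2}\otimes u^{\otimes(r-2)}\bigr).
\end{equation*}
Write
\begin{equation*}
 D\bigl(x^iu\otimes u^{\otimes(r-2)}\bigr)=x^iD\bigl(u^{\otimes(r-1)}\bigr)+[D,x^i]_1\bigl(u^{\otimes(r-1)}\bigr).
\end{equation*}
Sum over $i$ and use $\sum_i (x^i)^2=1$. Together with the Euler equation, this rewrites $\sum_i\kD\bigl((x^iu)^{\otimes2}\otimes u^{\otimes(r-2)}\bigr)$ as
\begin{equation*}
 \kD\bigl(u^{\otimes r}\bigr)+\int_{S^n}\sum_i x^iu\,[D,x^i]_1\bigl(u^{\otimes(r-1)}\bigr)\dvol .
\end{equation*}

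For the norm term, the first variation vanishes after summing by the balancing condition. The second variation of $\lV u_t\rV_{r^\ast}^r$ summed over $i$ is $(r^\ast-1)r\lV u\rV_{r^\ast}^{r}$ plus first-order pieces that cancel. Combining these with $\lambda$ eliminated via $\kD(u^{\otimes r})=\lambda\lV u\rV^{r^\ast}_{r^\ast}$ and the normalization $\lV u\rV_{r^\ast}=1$, the summed inequality should reduce exactly to
\begin{equation*}
 \kD\bigl(u^{\otimes r}\bigr)\le\frac{(r-1)(n-2k)}{2rk}\int_{S^n}\sum_i x^iu\,[D,x^i]_1\bigl(u^{\otimes(r-1)}\bigr)\dvol .
\end{equation*}
This is the reverse of inequality~\eqref{eqn:fl-poincare}.

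\emph{Conclusion.} The Frank--Lieb Property, applicable since $u\in U_\sC^{d\theta^2}$, then forces equality. Hence $u$ is constant, so the original minimizer is $c\cdot\Phi^{-1}$. Here $c\neq0$ because $u\neq0$.

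The main obstacle is the bookkeeping in the second-variation step: checking that the constants combine precisely to $\frac{(r-1)(n-2k)}{2rk}$. I would first verify the computation in the rank-two GJMS case, where it reduces to Frank--Lieb's, and then track the factors $r-1$ and $r^\ast-1=\frac{(r-1)n+2k}{n-2k}$ in general.
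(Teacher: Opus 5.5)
Your proposal is correct and matches the paper's proof step for step. You balance $u$ via the conformal action, test the second variation along $(1+tx^i)u$, sum over $i$ using $\sum_i (x^i)^2=1$ and the commutator identity, and obtain the reverse of the Frank--Lieb inequality. The constants you were worried about do close up: the summed second variation gives $\sum_i \kD\bigl((x^iu)^{\otimes 2}\otimes u^{\otimes(r-2)}\bigr)\ge \frac{r^\ast-1}{r-1}\kD\bigl(u^{\otimes r}\bigr)$, and $\frac{r^\ast-1}{r-1}-1=\frac{2rk}{(r-1)(n-2k)}$. Two small corrections: the commutator identity is pure algebra and does not need the Euler equation, and the first-order terms vanish for each $i$ individually by balancing, not only after summing.
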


Theorem~\ref{thm:frank-lieb} is due to Case~\cite{Case2019fl}*{Proposition~4.1}, though our treatment adds the role of constrained polydifferential operators and allows $u$ to change sign.

Case~\cite{Case2019fl} gave a simple proof that the GJMS operators and the constrained $\sigma_2$-operator satisfy the Frank--Lieb Property.
A key point is that these are \defn{Weyl operators}---that is, they can be defined via a tangential operator in the Fefferman--Graham ambient space~\cite{FeffermanGraham2012}---and the commutator is easily computed holographically;
see Section~\ref{sec:frank-lieb}.
Table~\ref{table:known-results} specifies which constrained operators are known to satisfy the Frank--Lieb Property.
Objectives~\ref{o:classification} and~\ref{o:commutator} together aim to use the Frank--Lieb Property to compute $Y_{(D,\sC)}(S^n,\kc_0)$.

Since a geometric cone $U_{\sC}^g$ can be empty on a given Riemannian $n$-manifold\footnote{
 The cone $U_{\sC}^g$ in Example~\ref{ex:sigma2-commutator} is empty on manifolds with nonpositive Yamabe constant.
},
a global assumption is necessary to solve the $(D,\sC)$-Yamabe Problem.
Andrade et al.~\cite{AndradeCasePiccioneWei2023} proposed that any such assumption should satisfy the following properties:

\begin{definition}
 \label{defn:aubin}
 Let $(D,\sC)$ be a constrained polydifferential operator of rank $r$ and homogeneity $-2k>-n$ on $n$-manifolds.
 A \defn{geometric Aubin set} for $(D,\sC)$ is a set $\sA$ of compact conformal $n$-manifolds such that
 \begin{enumerate}
  \item if $(M,\kc) \in \sA$, then $0 < Y_{(D,\sC)}(M,\kc) \leq Y_{(D,\sC)}(S^n,\kc_0)$ with equality if and only if $(M,\kc)$ is conformally equivalent to $(S^n,\kc_0)$;
  \item the $(D,\sC)$-Yamabe Problem has a solution for all $(M,\kc) \in \sA$; and
  \item if $(M,\kc) \in \sA$ and $\pi \colon \cM \to M$ is a finite covering, then $(\cM,\pi^\ast\kc) \in \sA$.
 \end{enumerate}
\end{definition}

Here $\pi^\ast\kc := [ \pi^\ast g]$ is the conformal class on $\cM$ induced by $\pi$;
it is independent of the choice of $g \in \kc$.
The positivity of the $(D,\sC)$-Yamabe constant is equivalent, under the assumption of a Sobolev-type inequality, to the positivity of the first nonlinear eigenvalue of $(D,\sC)$;
see Lemma~\ref{lem:positivity-to-spectrum}.

Prior analytic work of many authors identifies nonempty geometric Aubin sets for the $Q$-curvatures and the $\sigma_k$-curvatures when they are variational;
see Section~\ref{sec:examples}.
This is summarized in Table~\ref{table:known-results}.

\begin{table}[h!]
 \label{table:known-results}
 \caption{Known constrained Weyl operators, the Frank--Lieb Property, and their geometric Aubin sets}
 \renewcommand{\arraystretch}{1.1}
 \begin{tabular}{|c|c|c|c|c|c|}
  \hline
  Operator & Rank & Order & Constraint & Frank--Lieb? & Aubin Set \\
  \hline\hline
  GJMS & $2$ & $2k$ & $\emptyset$ & Yes & Yes, $k \leq 2$ \\
  Ex.~\ref{ex:gjms-operator} & & & & Ex.~\ref{ex:gjms-commutator} & cf.\ Conj.~\ref{conj:gjms-aubin-set} \\
  \hline
  $\sigma_2$-operator & $4$ & $2$ & $\sigma_1$-operator & Yes & Partial \\
  Ex.~\ref{ex:sigma2-operator} & & & & Ex.~\ref{ex:sigma2-commutator} & Ex.~\ref{ex:sigma2-set} \\
  \hline
  $v_k$-operator & $2k$ & $2$ & Partial & Partial & Partial \\
  Ex.~\ref{ex:vk} & & & & Ex.~\ref{ex:vk-commutator} & Ex.~\ref{ex:vk-set} \\
  \hline
  Ovsienko--Redou & $3$ & $2k$ & $\Id$ & Yes, $k = 1,2$ & ? \\
  Ex.~\ref{ex:ovsienko-redou-operator} & & & & Ex.~\ref{ex:ovsienko-redou-commutator} & \\
  \hline
 \end{tabular}
\end{table}

There are two key points to Definition~\ref{defn:aubin}.
First, it reflects the observation that many (e.g.\ \cites{LeeParker1987,HangYang2016l,ShengTrudingerWang2012,Aubin1976}) Yamabe-type problems can be easily solved under the assumption $Y_{(D,\sC)}(M^n,\kc) < Y_{(D,\sC)}(S^n,\kc_0)$.
Second, it is closed under finite coverings and implies the existence of solutions of PDE~\eqref{eqn:pde} with uniformly bounded energy.
Andrade et al.~\cite{AndradeCasePiccioneWei2023} showed that these properties lead to geometrically distinct\footnote{
 Solutions $u_1,u_2 \in U$ of PDE~\eqref{eqn:pde} are \defn{geometrically distinct} if $u_1 \not\in u_2 \cdot \Conf(M,\kc)$.
}
solutions to PDE~\eqref{eqn:pde}:

\begin{theorem}
 \label{thm:compact-nonuniqueness}
 Let $\sA$ be a geometric Aubin set for a constrained polydifferential operator $(D,\sC)$.
 Suppose $(M,\kc) \in \sA$ is such that $\pi_1(M)$ has infinite profinite completion.
 For each $m \in \bN$, there is a finite regular covering $\pi \colon \cM \to M$ such that for any $g \in \kc$ there are at least $m$ positive, pairwise geometrically distinct solutions of PDE~\eqref{eqn:pde} in $U_{\sC}^{\pi^\ast g}$.
\end{theorem}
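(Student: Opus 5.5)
The plan is to follow the covering argument of Bettiol--Piccione, as adapted by Andrade et al.: produce solutions on a tower of finite covers by pulling back minimizers from intermediate covers, and separate them by energy. Since $\pi_1(M)$ has infinite profinite completion, it has finite-index normal subgroups of arbitrarily large index. I will use this to build a chain of finite regular coverings
\[
\cM = M_m \to M_{m-1} \to \dotsb \to M_1 \to M_0 = M ,
\]
all of them regular over $M$, with the degrees $d_j := \deg(M_j \to M)$ growing as fast as I like. Concretely, I intersect finite-index normal subgroups of $\pi_1(M)$ to get a descending chain $N_m \subset \dotsb \subset N_1 \subset \pi_1(M)$ of normal subgroups whose indices increase quickly. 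Condition~(iii) of Definition~\ref{defn:aubin} puts every $(M_j,\pi_j^\ast\kc)$ in $\sA$. Condition~(ii) then gives a positive minimizer $u_j \in U_{\sC}$ of the $(D,\sC)$-Yamabe problem on $M_j$, and by Lemma~\ref{lem:euler-equation} it solves PDE~\eqref{eqn:pde}.

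Next I pull each $u_j$ back to $\cM$ along $p_j \colon \cM \to M_j$, normalizing by a constant so the pulled-back function still solves PDE~\eqref{eqn:pde} with a fixed $\lambda$, or equivalently comparing the scale-invariant quantity $\mF$. By Lemma~\ref{lem:geometric-cone}(iii) and (i), $\tilde u_j := c_j p_j^\ast u_j$ lies in $U_{\sC}^{\pi^\ast g}$. It is a positive solution because $D$ is natural and commutes with local isometries. Its energy is computed by integrating over the $\deg(p_j)$ sheets:
\[
\mF^{\pi^\ast g}(\tilde u_j) = \deg(p_j)^{1 - r/r^\ast}\, Y(M_j) = \deg(p_j)^{2k/n}\, Y(M_j),
\]
where $Y(M_j) := Y_{(D,\sC)}(M_j,\pi_j^\ast\kc)$ and $1 - r/r^\ast = 2k/n > 0$. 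Condition~(i) bounds the Yamabe constants as $0 < Y(M_j) \le Y(S^n)$. A lower bound comes from the Yamabe constant of the total space, since the pullback of $u_j$ to $\cM$ is a competitor there: $Y(M_j) \ge Y(\cM)\,\deg(p_j)^{-2k/n} > 0$.

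The main obstacle is separating the solutions by energy while the intermediate Yamabe constants are controlled only from above by $Y(S^n)$. I would try the following. The energies satisfy $\deg(p_j)^{2k/n} Y(M_j) \le (d_m/d_j)^{2k/n} Y(S^n)$. They also satisfy $\deg(p_j)^{2k/n} Y(M_j) \ge (d_m/d_j)^{2k/n} \cdot Y(M_0)\, d_j^{-2k/n}$, by pulling back the minimizer of $M_j$ down to a competitor relation with $M$. More precisely, I would use $Y(M_j) \ge d_j^{-2k/n}\, Y(M)$, which follows because $Y(M_j) = \mF$ of a function that averages to a competitor on $M$. If that averaging step fails for nonlinear $D$, I would instead use the energy on $\cM$ directly: $\mF(\tilde u_j)$ lies in $[\,Y(\cM),\ (d_m/d_j)^{2k/n} Y(S^n)\,]$.

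Choosing the degrees so that $d_{j+1}/d_j$ is huge relative to $Y(S^n)/Y(M)$ makes the energy intervals pairwise disjoint. That gives $m$ distinct energy values. Finally, $\mF$ is invariant under the action of Lemma~\ref{lem:action} and under rescaling by constants, so distinct energies imply the $\tilde u_j$ are pairwise geometrically distinct.

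Independence of $g \in \kc$ follows from Lemma~\ref{lem:geometric-cone}(ii): the solution sets for different $g$ correspond under multiplication by $e^{-\frac{n-2k}{r}\Upsilon}$, which preserves positivity and geometric distinctness.
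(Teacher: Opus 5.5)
Your proposal has a genuine gap in the step that separates the energies.

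\textbf{The lower bound is not justified.} Your primary inequality $Y(M_j) \geq d_j^{-2k/n}Y(M)$ is unproved. Pulling back only gives the opposite inequality $Y(M_j) \leq d_j^{2k/n}Y(M)$. For a nonlinear polydifferential operator with constraints, there is no general way to ``average'' a function on $M_j$ down to a competitor on $M$. For $L_2$ one can use $v=(\sum_{y\in p^{-1}(x)}u(y)^2)^{1/2}$, but that relies on the quadratic gradient structure and has no analogue for a general $\kD$ or for the cone $U_{\sC}$.

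\textbf{The fallback does not separate anything.} The intervals $[\,Y(\cM),(d_m/d_j)^{2k/n}Y(S^n)\,]$ all share the left endpoint $Y(\cM)$. They are therefore never pairwise disjoint. So fixing all degrees in advance in terms of $Y(S^n)/Y(M)$ does not produce $m$ distinct energies.

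\textbf{The missing idea.} Build the tower inductively, and let each degree depend on the Yamabe constant of the cover just constructed. That constant is positive by Conditions~(i) and~(iii), even though it is not controlled by $Y(M)$. Having built $M_j$, choose $M_{j+1}\to M_j$ of degree $e_{j+1}$ with $e_{j+1}^{2k/n}Y(M_j) > Y(S^n)$. It stays regular over $M$ if you intersect $N_j$ with a normal subgroup of sufficiently large index. Then the exact energies $E_j := (d_m/d_j)^{2k/n}Y(M_j)$ of your pulled-back minimizers on $\cM = M_m$ satisfy
\[
E_j = \Bigl(\tfrac{d_m}{d_{j+1}}\Bigr)^{2k/n} e_{j+1}^{2k/n}\,Y(M_j) > \Bigl(\tfrac{d_m}{d_{j+1}}\Bigr)^{2k/n} Y(S^n) \geq E_{j+1} .
\]
Hence $E_0 > E_1 > \dotsb > E_m$, with no comparison between $Y(M_j)$ and $Y(M)$ needed. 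Since each $Y(M_j)$ is a conformal invariant, the resulting cover is still independent of $g \in \kc$.

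This is exactly the paper's argument. There it is phrased through the uniform bound $\lV u \rV_{r^\ast} \leq \Lambda$ for solutions of the normalized PDE, which gives $\mF(u) \leq \Lambda^{2kr/(n-2k)}$; that constant plays the role of $Y(S^n)$.

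The rest of your outline is sound: the scaling $\mF(p^\ast u) = \deg(p)^{2k/n}\mF(u)$, membership in $U_{\sC}$ via Lemma~\ref{lem:geometric-cone}, invariance of $\mF$ under the conformal action, and the use of normal subgroups to keep the coverings regular.
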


Theorem~\ref{thm:compact-nonuniqueness} differs from its original formulation~\cite{AndradeCasePiccioneWei2023}*{Theorem~1.2} only in its use of constrained polydifferential operators.
In Section~\ref{sec:nonuniqueness} we prove a stronger result that requires only the positivity of the $(D,\sC)$-Yamabe constant and the existence of critical points of the $\mF$-functional of uniformly bounded energy;
that such a statement holds was briefly mentioned by Andrade et al.~\cite{AndradeCasePiccioneWei2023}.
This formulation can be applied to critical points of $\mF$ constructed by flow methods (e.g.\ \cites{GurskyMalchiodi2014,ShengTrudingerWang2007}).

The assumption that $\pi_1(M)$ has infinite profinite completion is equivalent to the existence of an infinite tower
\begin{equation*}
 \dotsm \to \cM_3 \to \cM_2 \to \cM_1 \to M
\end{equation*}
of finite connected coverings of degree at least two~\cite{BettiolPiccione2018}.
The Selberg--Malcev Lemma~\cite{Ratcliffe2006} implies that Theorem~\ref{thm:compact-nonuniqueness} applies to products $(H^k / \Gamma) \times S^{n-k}$ of compact hyperbolic manifolds and spheres when they are in a geometric Aubin set.

The solutions of PDE~\eqref{eqn:pde} constructed by Theorem~\ref{thm:compact-nonuniqueness} all lift to the conformal universal cover $(\cM,\cpi^\ast\kc)$ of $(M,\kc)$.
Since $\Conf(\cM,\cpi^\ast\kc)$ can contain elements that do not descend to the quotient,\footnote{
 For example, the conformal diffeomorphism group of hyperbolic space is noncompact, but compact hyperbolic manifolds have finite conformal group~\cite{Ratcliffe2006}.
}
it is not clear whether the lifts of geometrically distinct solutions remain geometrically distinct.
Andrade et al.~\cite{AndradeCasePiccioneWei2023} addressed this issue using the Ferrand--Obata Theorem~\cites{Ferrand1996,Obata1971,Schoen1995}:

\begin{theorem}
 \label{thm:noncompact-nonuniqueness}
 Let $\sA$ be a geometric Aubin set for a constrained polydifferential operator $(D,\sC)$.
 Suppose $(M,\kc) \in \sA$ is such that $\pi_1(M)$ has infinite profinite completion.
 If the conformal universal cover $(\cM,\cpi^\ast\kc)$ of $(M,\kc)$ is not conformally equivalent to $(\bR^n,\kc_0)$, then for any $g \in \kc$ there is a countable set $\{ \cu_j \}_{j\in \bN}$ of positive, pairwise geometrically distinct solutions of PDE~\eqref{eqn:pde} in $U_{\sC}^{\cpi^\ast g}$.
\end{theorem}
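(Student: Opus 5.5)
The plan is to build the solutions $\cu_j$ on $\cM$ by lifting the solutions from Theorem~\ref{thm:compact-nonuniqueness} along a tower of finite coverings, and to separate them by energy after lifting. The energy of a lift is infinite, so the invariant used will be the \emph{local} energy, i.e.\ the energy of $\cu$ on a fundamental domain of the deck group of the intermediate cover. The Ferrand--Obata Theorem is what controls $\Conf(\cM,\cpi^\ast\kc)$.

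First, since $\pi_1(M)$ has infinite profinite completion, I fix a tower $\dotsm \to \cM_2 \to \cM_1 \to M$ of finite regular coverings whose degrees $d_j \to \infty$. Each $(\cM_j,\pi_j^\ast\kc)$ lies in $\sA$ by Definition~\ref{defn:aubin}(iii). So each carries a minimizer $u_j \in U_\sC$ with
\begin{equation*}
 \kD(u_j^{\otimes r}) = Y_j , \qquad \lV u_j\rV_{r^\ast}=1 , \qquad 0<Y_j\le Y_{(D,\sC)}(S^n,\kc_0).
\end{equation*}
The pullback of the minimizer on $M$ to $\cM_j$, normalized to unit norm, has energy $d_j^{1-r/r^\ast}Y(M)=d_j^{2k/n}Y(M)$, which tends to infinity. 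Hence $u_j$ is not a lift from $M$ once $j$ is large. I then let $\cu_j$ be the lift of $u_j$ to $\cM$. By Lemma~\ref{lem:geometric-cone}(iii), $\cu_j \in U_\sC^{\cpi^\ast g}$. Since PDE~\eqref{eqn:pde} is local and natural, $\cu_j$ solves it with constant $\lambda_j = Y_j$ (with the normalization above).

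Second, I use the hypothesis that $(\cM,\cpi^\ast\kc)$ is not conformally $(\bR^n,\kc_0)$. It is not the sphere either, because $\cM$ is the universal cover of a manifold whose fundamental group is infinite, so $\cM$ is noncompact. The Ferrand--Obata Theorem for noncompact manifolds then says that $\Conf(\cM)$ acts properly. In fact there is a metric $\tilde h \in \cpi^\ast\kc$ for which $\Conf(\cM)$ acts by isometries, and I plan to pick $\tilde h$ invariant under $\pi_1(M)$ by averaging. The quantities that stay invariant under the action~\eqref{eqn:action} are the following:
\begin{itemize}
 \item $\lambda$, since the action preserves solutions together with their constant by Lemma~\ref{lem:action};
 \item the function $|\cu|^{2r/(n-2k)}\tilde h$ up to pullback, i.e.\ the metric $\hat g_j := \cu_j^{2r/(n-2k)}\cpi^\ast g$ up to isometry.
\end{itemize}
The metric $\hat g_j$ is $\Gamma_j$-periodic, where $\Gamma_j=\pi_1(\cM_j)$. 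If $\cu_i = \cu_j\cdot\Phi$, then $\hat g_i$ and $\hat g_j$ are isometric, so their volume growth and their largest discrete isometry groups agree.

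Third, I separate the $\cu_j$. Passing to a subsequence, I ask that the constants $\lambda_j$ take infinitely many values; if instead they stabilize, I use the volume of $\hat g_j$ modulo its full isometry group. For a non-lifted $u_j$, the isometry group of $\hat g_j$ contains $\Gamma_j$ but cannot contain $\Gamma_{j'}$ for $j'<j$ with finite index, since otherwise $u_j$ would descend. Because $\Conf(\cM)$ is proper, the isometry group of $\hat g_j$ is discrete and cocompact, and its covolume is
\begin{equation*}
 \Vol(\cM_j,\hat g)/[\mathrm{Isom}:\Gamma_j] = 1/[\mathrm{Isom}:\Gamma_j].
\end{equation*}
The main obstacle is showing that these covolumes, or the data $(\lambda_j,\text{covolume})$, are pairwise distinct along a subsequence. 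I expect to get this from the energy blowup $d_j^{2k/n}Y(M)\to\infty$ against the bound $Y_j \le Y(S^n)$. This bound prevents $u_j$ from being conformally equivalent to a pullback from any cover of smaller degree, and after passing to a subsequence it yields infinitely many geometrically distinct $\cu_j$.
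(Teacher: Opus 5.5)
\textbf{There is a genuine gap.} It sits exactly at the step you call the ``main obstacle'': you never produce a quantity that is invariant under all of $\Conf(\cM,\cpi^\ast\kc)$ and that provably separates the lifts $\cu_j$.

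This is the difficulty the paper flags just before the statement. An element $\Phi\in\Conf(\cM,\cpi^\ast\kc)$ need not normalize $\Gamma_j$, so it does not descend to any $\cM_j$, and nothing proved on the compact covers transfers to $\cM$. Your closing argument (the energy blowup $d_j^{2k/n}Y(M)\to\infty$ against $Y_j\leq Y_{(D,\sC)}(S^n,\kc_0)$) only shows that $u_j$ is not a pullback from a smaller cover; it says nothing about such $\Phi$. Neither of your proposed invariants fills the hole:
\begin{itemize}
\item The constant $\lambda_j=Y_j$ only records your normalization $\lV u_j\rV_{r^\ast}=1$ on $\cM_j$. Rescaling changes $\lambda$ at will, so with $\lambda$ free the statement is trivial (take $c\,\cu_1$, $c>0$). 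At a fixed $\lambda$, as in PDE~\eqref{eqn:normalized-pde}, it carries no information. Nothing forces the $Y_j$ to take infinitely many values anyway.
\item For the covolume, properness of $\Conf(\cM)$ only makes $\mathrm{Isom}(\hat g_j)$ a Lie group with compact isotropy, not a discrete group. For the conformal Laplacian on $S^1\times S^{n-1}$, the periodic solutions on $\bR\times S^{n-1}$ are $O(n)$-invariant, so $[\mathrm{Isom}(\hat g_j):\Gamma_j]=\infty$ and your formula is meaningless. Even when $\mathrm{Isom}(\hat g_j)$ is discrete, the index $[\mathrm{Isom}(\hat g_j):\Gamma_j]$ is not controlled by anything in your argument, so there is no reason these covolumes differ.
\item Volume growth gives nothing either, since every $\hat g_j$ is quasi-isometric to $\pi_1(M)$.
\end{itemize}

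\textbf{The missing idea} is to use the Ferrand--Obata metric as the \emph{background} metric; you introduce it but never exploit it.
\begin{itemize}
\item Since $\pi_1(M)\subseteq\Conf(\cM)\subseteq\mathrm{Isom}(\cM,\tilde h)$, the metric $\tilde h$ is automatically deck-invariant, so no averaging is needed and $\tilde h=\cpi^\ast g_0$ for some $g_0\in\kc$.
\item Relative to $g_0$, every $\Phi\in\Conf(\cM)$ has $\lv J_\Phi\rv\equiv1$. Hence $\cu\cdot\Phi=\Phi^\ast\cu$, and $\inf_{\cM}\cu$ is a $\Conf(\cM)$-invariant.
\item Take minimizers on finite covers $\cM_j\to M$ of degree $D_j\to\infty$, rescaled to solve PDE~\eqref{eqn:normalized-pde}. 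Equation~\eqref{eqn:energy-estimate} gives $\int_{\cM_j}u_j^{r^\ast}\,\dvol=Y_j^{n/(2k)}\leq Y_{(D,\sC)}(S^n,\kc_0)^{n/(2k)}$, while $\Vol(\cM_j)=D_j\Vol(M,g_0)\to\infty$.
\item Therefore $0<\min u_j\to0$, and a subsequence with strictly decreasing minima yields pairwise geometrically distinct lifts.
\item To pass to an arbitrary $g=e^{2\Upsilon}g_0$, use Lemma~\ref{lem:geometric-cone} and the identity $(e^{w\Upsilon}v)\cdot_g\Phi=e^{w\Upsilon}(v\cdot_{g_0}\Phi)$ with $w=-\frac{n-2k}{r}$.
\end{itemize}
The paper gives no proof of its own. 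It defers to Andrade et al., who lift the solutions of Theorem~\ref{thm:compact-nonuniqueness} and keep them distinct using the Ferrand--Obata Theorem together with a covering lemma.
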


Theorem~\ref{thm:noncompact-nonuniqueness} differs from its original formulation~\cite{AndradeCasePiccioneWei2023}*{Theorem~1.3} only in its use of constrained polydifferential operators.
As such, we do not give a proof.

Since the only compact quotients of $(\bR^n,\kc_0)$ are (finite quotients of conformal classes of) flat tori,\footnote{
    Liouville's Theorem and the requirement that $M^n = \bR^n/\Gamma$ is smooth and compact imply that $\Gamma \subset \Conf(\bR^n,\kc_0)$ must be a subset of the isometry group of the flat metric.
}
the assumption on the conformal universal cover can be replaced by the assumption that $(M,\kc)$ has positive Yamabe constant.

Since $H^m \times S^{n-m}$ and $S^n \setminus S^{m-1}$ are conformally equivalent with their flat conformal structure~\cites{MazzeoSmale1991}, Theorem~\ref{thm:noncompact-nonuniqueness} produces infinitely many geometrically distinct \emph{singular} solutions of PDE~\eqref{eqn:pde} on $S^n$ for many constrained polydifferential operators $(D,\sC)$ (cf.\ \cites{BettiolPiccione2018,BettiolPiccioneSire2021,MazzeoPacard1996}).

Theorems~\ref{thm:compact-nonuniqueness} and~\ref{thm:noncompact-nonuniqueness} indicate the utility of the general framework of formally self-adjoint, conformally covariant, polydifferential operators.
In Section~\ref{sec:future} we present a series of objectives that aim to compute $Y_{(D,\sC)}(S^n,\kc_0)$ and to solve the $(D,\sC)$-Yamabe Problem.
We also pose Conjecture~\ref{conj:gjms-einstein}, which is a variant of the longstanding conjectural classification~\cites{AIM2003-Conference,Branson2005} of GJMS operators, and Conjectures~\ref{conj:gjms-aubin-set} and~\ref{conj:case-malchiodi}, which seek geometric Aubin sets for the GJMS operators.

This paper is organized as follows:

In Section~\ref{sec:bg} we discuss natural polydifferential operators in more detail, prove Lemmas~\ref{lem:euler-equation}, \ref{lem:geometric-cone}, and \ref{lem:action}, and sketch the proof of Theorem~\ref{thm:cly}.

In Section~\ref{sec:ambient} we present constructions of the Weyl operators in Table~\ref{table:known-results}.

In Section~\ref{sec:frank-lieb} we prove Theorem~\ref{thm:frank-lieb}, explain how to holographically compute the commutator in Condition~\eqref{eqn:fl-poincare}, and apply this to the operators in Table~\ref{table:known-results}.

In Section~\ref{sec:examples} we relate the positivity of the $(D,\sC)$-Yamabe constant to a nonlinear eigenvalue.
We also describe geometric Aubin sets for the operators in Table~\ref{table:known-results}.

In Section~\ref{sec:nonuniqueness} we prove a stronger version of Theorem~\ref{thm:compact-nonuniqueness} that relaxes the assumption that the $(D,\sC)$-Yamabe Problem is solvable.

In Section~\ref{sec:future} we present objectives that aim at the $(D,\sC)$-Yamabe Problem.

%% file: bg.tex
\section{Conformally covariant polydifferential operators}
\label{sec:bg}

A \defn{(natural) polydifferential operator} of \defn{rank} $\ell + 1 \in \bN$ on $n$-manifolds is an assignment $D$ to each Riemannian $n$-manifold $(M,g)$ of a linear operator
\begin{equation*}
 D^g \colon C^\infty(M)^{\otimes\ell} \to C^\infty(M)
\end{equation*}
such that $D^g(u_1 \otimes \dotsm \otimes u_\ell)$ can be universally expressed as an $\bR$-linear combination of complete contractions of tensors
\begin{equation}
 \label{eqn:building-blocks}
 \nabla^{N_1}\Rm \otimes \dotsm \otimes \nabla^{N_k}\Rm \otimes \nabla^{N_{k+1}}u_1 \otimes \dotsm \otimes \nabla^{N_{k+\ell}}u_\ell ,
\end{equation}
where $N_1,\dotsc,N_{k+\ell} \in \bN \cup \{ 0 \}$ are powers.
\defn{Scalar Riemannian invariants} are polydifferential operators of rank $1$.
We say that $D$ has \defn{homogeneity} $h \in \bR$ if $D^{c^2g} = c^hD^g$ for all Riemannian $n$-manifolds $(M,g)$ and all constants $c>0$.
Any complete contraction of the tensor~\eqref{eqn:building-blocks} has homogeneity $-2k-\sum N_j$, and a complete contraction can only be formed if $\sum N_j$ is even.
Hence the homogeneity, when defined, is a nonpositive even integer.
We say that $D$ is \defn{symmetric} if
\begin{equation*}
 D^g\bigl( u_{\sigma(1)} \otimes \dotsm \otimes u_{\sigma(\ell)}\bigr) = D^g\bigl( u_1 \otimes \dotsm \otimes u_\ell\bigr) 
\end{equation*}
for all Riemannian $n$-manifolds $(M,g)$, all $u_1,\dotsc,u_\ell \in C^\infty(M)$, and all permutations $\sigma \in S_\ell$.

The \defn{Dirichlet form} associated to a polydifferential operator $D$ of rank $r$ is the assignment $\kD$ to each Riemannian $n$-manifold $(M,g)$ of a linear map
\begin{equation*}
 \kD^g \colon C_0^\infty(M)^{\otimes r} \to \bR
\end{equation*}
determined by
\begin{equation*}
 \kD^g( u_1 \otimes \dotsm \otimes u_r) := \int_M u_1 D^g( u_2 \otimes \dotsm \otimes u_r ) \dvol_g ,
\end{equation*}
where $\dvol_g$ is the Riemannian volume element determined by $g$.
We say that $D$ is \defn{formally self-adjoint} if $\kD$ is symmetric;
i.e.
\begin{equation*}
 \kD( u_{\sigma(1)} \otimes \dotsm \otimes u_{\sigma(r)} ) = \kD( u_1 \otimes \dotsm \otimes u_r )
\end{equation*}
for all $u_1,\dotsc,u_r \in C_0^\infty(M)$ and all permutations $\sigma \in S_r$.
Note that if $D$ is formally self-adjoint, then it is symmetric.

Formal self-adjointness implies that PDE~\eqref{eqn:pde} is variational:

\begin{proof}[Proof of Lemma~\ref{lem:euler-equation}]
 Let $u,v \in C^\infty(M)$.
 Since $D$ is formally self-adjoint,
 \begin{equation*}
  \left. \frac{d}{dt} \right|_{t=0} \kD^g\bigl((u+tv)^{\otimes r}\bigr) = r\kD\bigl( v \otimes u^{\otimes(r-1)}\bigr) = \int_M rvD^g\bigl(u^{\otimes(r-1)}\bigr) \dvol_g .
 \end{equation*}
 It follows that if $u \not= 0$, then
 \begin{equation*}
  \left. \frac{d}{dt} \right|_{t=0} \mF^g(u+tv) = \frac{r}{\lV u \rV_{r^\ast}^r}\int_M v\left( D^g\bigl(u^{\otimes(r-1)}\bigr) - \frac{\kD^g\bigl(u^{\otimes r}\bigr)}{\lV u \rV_{r^\ast}^{r^\ast}}\lv u \rv^{r^\ast-2}u \right) \dvol_g .
 \end{equation*}
 The conclusion readily follows.
\end{proof}

A polydifferential operator $D$ of rank $\ell+1$ is \defn{conformally covariant} if there are weights $a_1,\dotsc,a_\ell,b \in \bR$ such that if $g$ and $\hg = e^{2\Upsilon}g$ are conformally related metrics on an $n$-manifold $M$, then
\begin{equation} 
 \label{eqn:conformal-covariant-bidegree}
 D^{\hg}( u_1 \otimes \dotsm \otimes u_\ell ) = e^{b\Upsilon}D^g ( e^{-a_1\Upsilon}u_1 \otimes \dotsm \otimes e^{-a_\ell \Upsilon}u_\ell )
\end{equation}
for all $u_1,\dotsc,u_\ell \in C^\infty(M)$.
Such an operator has homogeneity $h = b - \sum_{j=1}^\ell a_j$.
Moreover, the Dirichlet forms are related by
\begin{equation*}
 \kD^{\hg}(u_0 \otimes \dotsm \otimes u_\ell) = \kD^g( e^{(n+b)\Upsilon}u_0 \otimes e^{-a_1\Upsilon}u_1 \otimes \dotsm \otimes e^{-a_\ell \Upsilon}u_\ell ) .
\end{equation*}
In particular, if $D$ is also formally self-adjoint, then
\begin{equation*}
 n+b = -a_1 = \dotsm = -a_\ell ,
\end{equation*}
and hence $a_1 = -(n+h)/(\ell+1)$.

Naturality and conformal covariance imply that $U_{\sC}$ is a geometric cone:

\begin{proof}[Proof of Lemma~\ref{lem:geometric-cone}]
 Let $(M,g)$ be a Riemannian $n$-manifold.
 Fix $u \in U_{\sC}^g$ and $C \in \sC$.
 First, since $C$ is linear, $cu \in U_{\sC}^g$.
 Second, conformal invariance implies that if $\hg = e^{2\Upsilon}g$, then
 \begin{equation*}
  C^{\hg}\left( e^{-\frac{n-2k}{r}\Upsilon}u^{\otimes(r(C)-1)} \right) = e^{b(C)\Upsilon}C^g\bigl( u^{\otimes(r(C)-1)} \bigr) > 0 .
 \end{equation*}
 Thus $U_{\sC}^{\hg} = e^{-\frac{n-2k}{r}\Upsilon}U_{\sC}^g$.
 Third, naturality implies that if $f \colon \hM \to M$ is a local diffeomorphism, then
 \begin{equation*}
   C^{f^\ast g}\left( f^\ast u^{\otimes(r(C)-1)} \right) = f^\ast\left( C^g\bigl( u^{\otimes(r(C)-1)} \bigr) \right) > 0 .
 \end{equation*}
 Thus $f^\ast(U_{\sC}^g) \subseteq U_{\sC}^{f^\ast g}$.
\end{proof}

Naturality and conformal covariance also imply that PDE~\eqref{eqn:pde} is $\Conf(M,\kc)$-invariant:

\begin{proof}[Proof of Lemma~\ref{lem:action}]
 Let $(M,\kc)$ be a conformal $n$-manifold and pick $g \in \kc$.
 Let $\Phi \in \Conf(M,\kc)$.
 Then $\Phi^\ast g = \lv J_\Phi \rv^{2/n}g$.
 It follows that $u \cdot \Phi$ defines a right action of $\Conf(M,\kc)$ on $C^\infty(M)$.
 Since $\Phi$ is a diffeomorphism, Lemma~\ref{lem:geometric-cone} yields
 \begin{equation*}
  u \in U_{\sC}^g \Longleftrightarrow \Phi^\ast u \in U_{\sC}^{\Phi^\ast g} = \lv J_\Phi \rv^{w/n}U_{\sC}^g .
 \end{equation*}
 Thus $u \cdot \Phi \in U_{\sC}^g$, and hence $U_{\sC}^g$ is $\Conf(M,\kc)$-invariant.
 
 Suppose now that $u \in U_{\sC}^g$ is such that
 \begin{equation*}
  D^g\bigl(u^{\otimes(r-1)}\bigr) = \lambda \lv u \rv^{\frac{(r-2)n+4k}{n-2k}}u .
 \end{equation*}
 On the one hand, the naturality and conformal covariance of $D$ imply that
 \begin{align*}
  \Phi^\ast \left( D^g\bigl( u^{\otimes(r-1)} \bigr) \right) & = D^{\Phi^\ast g}\bigl( (\Phi^\ast u)^{\otimes(r-1)} \bigr) \\
   & = \lv J_\Phi \rv^{-\frac{(r-1)n+2k}{rn}} D^g \bigl( (u \cdot \Phi)^{\otimes(r-1)} \bigr) .
 \end{align*}
 On the other hand, direct computation gives
 \begin{equation*}
  \Phi^\ast \left( \lv u \rv^{\frac{(r-2)n+4k}{n-2k}}u \right) = \lv J_\Phi \rv^{-\frac{(r-1)n+2k}{rn}} \lv u \cdot \Phi \rv^{\frac{(r-2)n+4k}{n-2k}} (u \cdot \Phi) .
 \end{equation*}
 The conclusion readily follows.
\end{proof}

One construction of formally self-adjoint, conformally covariant, polydifferential operators is by iteratively taking a (weighted) conformal linearization of a CVI:

\begin{proof}[Proof of Theorem~\ref{thm:cly}]
 Given $r \in \bN$, define $D_j^r \colon C^\infty(M)^{\otimes j} \to C^\infty(M)$, $j \in \bN \cup \{ 0 \}$, by $D_0^r = I$ and
 \begin{equation}
  \label{eqn:noncritical-conformal-definition}
  (D_j^r)^g \bigl( u \otimes v^{\otimes(j-1)} \bigr) := \left. \frac{\partial}{\partial t} \right|_{t=0} e^{\frac{(r-1)n+2k}{r}tu} (D_{j-1}^r)^{e^{2tu}g}\bigl( (e^{-\frac{n-2k}{r}tu}v)^{\otimes(j-1)} \bigr)
 \end{equation}
 when $j \in \bN$.
 By induction, $D_j^r$ is formally self-adjoint~\cite{CaseLinYuan2018b}*{Proposition~4.3}.
 The homogeneity of $I$ implies that
 \begin{equation}
  \label{eqn:apply-one}
  D_j^r\bigl(1 \otimes v^{\otimes(j-1)} \bigr) = \frac{(n-2k)(r-j)}{r}D_{j-1}^r\bigl( v^{\otimes(j-1)} \bigr)
 \end{equation}
 for all $v \in C^\infty(M)$~\cite{CaseLinYuan2018b}*{Lemma~4.2}.
 In particular, $D_{2k}^{2k}\bigl(1 \otimes v^{\otimes(2k-1)}\bigr)=0$.
 Considering possible contractions of~\eqref{eqn:building-blocks} implies that $D_{2k}^{2k}=0$~\cite{CaseLinYuan2018b}*{Lemma~3.6(b)}.
 Let $r \in \bN$ be the minimal integer such that $D_r^r=0$.
 Then $D := \frac{1}{(r-1)!}D_{r-1}^r$ is a formally self-adjoint polydifferential operator of rank $r$ that, by Equation~\eqref{eqn:noncritical-conformal-definition}, is conformally covariant.
 Equation~\eqref{eqn:apply-one} implies that $D$ satisfies Condition~\eqref{eqn:associated-operator}.
 
 Suppose now that $\cD$ is a formally self-adjoint, conformally covariant, polydifferential operator of rank $r$ and homogeneity $-2k$ on $n$-manifolds that satisfies Condition~\eqref{eqn:associated-operator}.
 A straightforward induction argument yields
 \begin{equation*}
  D_j^r\bigl( v^{\otimes j} \bigr) = \frac{(r-1)!}{(r-1-j)!}\left(\frac{r}{n-2k}\right)^{r-1-j}\cD \bigl( v^{\otimes j} \otimes 1^{\otimes (r-1-j)} \bigr)
 \end{equation*}
 for all integers $0 \leq j \leq r-1$.
 In particular, $\cD = \frac{1}{(r-1)!}D_{r-1}^r$.
\end{proof}

%% file: ambient.tex
\section{Weyl operators}
\label{sec:ambient}

In this section we describe some formally self-adjoint, conformally covariant, polydifferential operators that are known to be Weyl operators.
We also indicate how to compute such operators at Einstein manifolds.
Doing so requires a quick summary of the Fefferman--Graham ambient space~\cite{FeffermanGraham2012}.

Let $(M,\kc)$ be an conformal $n$-manifold.
Consider the principal $\bR_+$-bundle
\begin{equation*}
 \mG := \left\{ (x,g_x) \suchthat x\in M, g \in \kc \right\}
\end{equation*}
with projection $\pi(x,g_x) := x$ and action $\delta_s(x,g_x) := (x,s^2g_x)$, called the \defn{dilation}.
The tautological symmetric $2$-tensor on $\mG$ is
\begin{equation*}
 \boldsymbol{g}(X,Y) := g_x(\pi_\ast X,\pi_\ast Y)
\end{equation*}
for $X,Y \in T_{(x,g_x)}\mG$.
Extend the dilation $\delta_s$ to $\mG \times \bR$ to act trivially on $\bR$.

A \defn{pre-ambient space} for $(M,\kc)$ is a pair $(\cmG,\cg)$ of a dilation-invariant open set $\cmG \subseteq \mG \times \bR$ containing $\mG \times \{ 0 \}$ and a pseudo-Riemannian metric $\cg$ that is homogeneous of degree $2$ with respect to dilations and satisfies $\iota^\ast\cg = \boldsymbol{g}$, where $\iota \colon \mG \hookrightarrow \mG \times \{ 0 \}$ is the standard inclusion.
Denote by $\rho$ the standard coordinate on $\bR$.
We say that $(\cmG,\cg)$ is an \defn{ambient space} if additionally
\begin{enumerate}
 \item if $n = 2$ or $n$ is odd, then $\Ric(\cg) = O(\rho^\infty)$ along $\mG \times \{ 0 \}$; and
 \item if $n \geq 4$ is even, then $\Ric(\cg) = O^+(\rho^{n/2-1})$.
\end{enumerate}
Two ambient spaces $(\cmG_1,\cg_1)$ and $(\cmG_2,\cg_2)$ for $(M,\kc)$ are \defn{ambient equivalent} if, after shrinking $\cmG_1$ and $\cmG_2$ if necessary, there is a dilation-equivariant diffeomorphism $\Phi \colon \cmG_1 \to \cmG_2$ such that $\Phi\rv_{\mG \times \{0\}} = \Id$ and
\begin{enumerate}
 \item if $n$ is odd, then $\cg_1 - \Phi^\ast\cg_2 = O(\rho^\infty)$; and
 \item if $n$ is even, then $\cg_1 - \Phi^\ast\cg_2 = O^+(\rho^{n/2})$.
\end{enumerate}
Here $O^+(\rho^s)$ consists of those symmetric two-tensors $T \in O(\rho^s)$ on $\cmG$ such that for each $(x,g_x) \in \mG$, there is an $S \in S^2T_x^\ast M$ such that $\tr_{g_x}S = 0$ and $\iota^\ast(\rho^{-s}T) = \pi^\ast S$.
Fefferman and Graham~\cite{FeffermanGraham2012}*{Theorem~2.3} showed that to each conformal $n$-manifold one can associate a unique, up to ambient equivalence, ambient space.
Moreover, one can always assume that $(\cmG,\cg)$ is \defn{straight} and \defn{in normal form};
i.e.
\begin{equation*}
 \cg = 2\rho \, dt^2 + 2t \, dt \, d\rho + t^2g_\rho 
\end{equation*}
for a one-parameter family of metrics $g_\rho$ with $g_0 \in \kc$.

Let $(\cmG,\cg)$ be an ambient space for a conformal $n$-manifold $(M,\kc)$ and let $X$ denote the infinitesimal generator of dilation.
Given $w \in \bR$, let
\begin{equation*}
 \cmE[w] := \left\{ \cu \in C^\infty(\cmG) \suchthat X\cu = w\cu \right\}
\end{equation*}
denote the space of homogeneous functions of weight $w$ on $\cmG$.
The space of \defn{conformal densities} of weight $w \in \bR$ on $(M,\kc)$ is
\begin{equation*}
 \mE[w] := \left\{ \iota^\ast\cu \suchthat \cu \in \cmE[w] \right\} = \left\{ u \in C^\infty(\mG) \suchthat \text{$\delta_s^\ast u = s^wu$ for all $s \in \bR_+$} \right\} .
\end{equation*}
Given $\cu \in \cmE[w]$ and $g \in \kc$, we define $\cu\rv_{(M,g)} \in C^\infty(M)$ by
\begin{equation*}
 \cu\rv_{(M,g)}(x) := \cu(x,g_x,0)
\end{equation*}
and call $\cu$ a \defn{homogeneous extension} of $\cu\rv_{(M,g)}$.
Note that $\cu\rv_{(M,g)}$ depends only on $\iota^\ast\cu$ and $g$.
Moreover, if $\hg = e^{2\Upsilon}g$, then
\begin{equation*}
 \cu\rv_{(M,\hg)} = e^{w\Upsilon}\cu\rv_{(M,g)} .
\end{equation*}
Conversely, if $u \in C^\infty(M)$ and $g \in \kc$ are given, then
\begin{equation*}
 \cu\bigl(x,(e^{2\Upsilon} g)_x,\rho\bigr) := e^{w\Upsilon(x)}u(x)
\end{equation*}
defines an element $\cu \in \cmE[w]$ such that $\cu\rv_{(M,g)} = u$.
In particular, homogeneous extensions always exist.

Let $\cD \colon \cmE[a_1] \otimes \dotsm \otimes \cmE[a_\ell] \to \cmE[b]$ be a natural polydifferential operator on the ambient space of a conformal $n$-manifold.
We say that $\cD$ is \defn{tangential} if
\begin{equation*}
 \iota^\ast \left( \cD( \cu_1 \otimes \dotsm \otimes \cu_\ell ) \right)
\end{equation*}
depends only on $\iota^\ast\cu_1, \dotsc, \iota^\ast\cu_\ell$, and on $\cg$ mod $O^+(\rho^{n/2})$ if $n$ is even.
In this case,
\begin{equation}
 \label{eqn:induced-operator}
 D^g(u_1 \otimes \dotsm \otimes u_\ell) := \cD( \cu_1 \otimes \dotsm \otimes \cu_\ell ) \bigl|_{(M,g)} ,
\end{equation}
where $\cu_j \in \cmE[a_j]$ are homogeneous extensions of $u_j$, defines a conformally covariant\footnote{
 The domain and codomain of $\cD$ imply that $D$ transforms as in Equation~\eqref{eqn:conformal-covariant-bidegree}.
}
polydifferential operator of rank $\ell+1$.
A \defn{Weyl operator} is a conformally covariant polydifferential operator that can be defined holographically in this way.
If $\cI \in \cmE[w]$ is a natural scalar Riemannian invariant, then Equation~\eqref{eqn:induced-operator} determines a scalar conformal invariant $\mI$ of weight $w$~\cite{FeffermanGraham2012}.

Graham, Jenne, Mason, and Sparling~\cite{GJMS1992} made two important observations for the construction of Weyl operators.
First, $Q := \cg(X,X) \in \cmE[2]$ is a defining function for $\iota(\mG) \subset \cmG$.
In particular, if $\cu,\cv \in \cmE[w]$, then $\iota^\ast\cu = \iota^\ast\cv$ if and only if $\cu - \cv = Q\cf$ for some $\cf \in \cmE[w-2]$.
Second, regarding $Q$ as a multiplication operator and $X$ as a derivation on $\cmG$, one has the commutator identity\footnote{
 Recall our convention $\cDelta \geq 0$; this is responsible for a sign discrepancy with~\cite{GJMS1992}.
}~\cite{GJMS1992}*{Equation~(1.8)}
\begin{equation}
 \label{eqn:sl2-algebra}
 [\cDelta^k,Q] = -2k\cDelta^{k-1}(2X + n + 4 - 2k) .
\end{equation}
Suppose $\cD \colon \cmE[a_1] \otimes \dotsm \otimes \cmE[a_\ell] \to \cmE[b]$ is a natural polydifferential operator that is expressed as a linear combination of compositions of the ambient Laplacian and multiplication operators.
Equation~\eqref{eqn:sl2-algebra} allows one to check if each commutator
\begin{multline}
 \label{eqn:commutator-defn}
 [\cD,Q]_j(\cu_1 \otimes \dotsm \otimes \cu_\ell) := \cD( \cu_1 \otimes \dotsm \otimes \cu_{j-1} \otimes Q\cu_j \otimes \cu_{j+1} \otimes \dotsm \otimes \cu_{\ell} ) \\
  - Q\cD( \cu_1 \otimes \dotsm \otimes \cu_\ell )
\end{multline}
vanishes on $\cmE[a_1] \otimes \dotsm \otimes \cmE[a_{j-1}] \otimes \cmE[a_j-2] \otimes \cmE[a_{j+1}] \otimes \dotsm \otimes \cmE[a_\ell]$.
If so, and if $\cD$ depends only on $\cg$ mod $O^+(\rho^{n/2})$ if $n$ is even, then $\cD$ is tangential.
This is the approach taken in each of the examples of this section.

Graham et al.~\cite{GJMS1992} gave the first general construction of formally self-adjoint, conformally covariant, differential operators:

\begin{example}[GJMS operators~\cite{GJMS1992}]
 \label{ex:gjms-operator}
 Fix $k,n \in \bN$;
 if $n$ is even, then assume additionally that $k \leq n/2$.
 Equation~\eqref{eqn:sl2-algebra} implies that $\cDelta^k$ is tangential on $\cmE\bigl[-\frac{n-2k}{2}\bigr]$.
 The induced operator $L_{2k}$ has rank $2$ and homogeneity $-2k$.
 
 There are four proofs that $L_{2k}$ is formally self-adjoint:
 via scattering theory~\cite{GrahamZworski2003},
 via the renormalized Dirichlet form of a second-order operator~\cite{FeffermanGraham2002},
 via Juhl's formula~\cites{Juhl2009,FeffermanGraham2013},
 and via the renormalized Dirichlet form of an operator of order $2k$~\cite{CaseYan2024}.
 The first two approaches use the realization of $L_{2k}u$ as the obstruction to the existence of a formally harmonic homogeneous extension $\cu \in \cmE\bigl[-\frac{n-2k}{2}\bigr]$ of $u$~\cite{GJMS1992}, while the last two use the construction of $L_{2k}$ via a tangential operator.
 \qed
\end{example}

There are two GJMS operators that have attracted particular attention.
The \defn{conformal Laplacian} is the second-order GJMS operator
\begin{equation*}
 L_2 := \Delta + \frac{n-2}{2}J .
\end{equation*}
It features heavily in the solution of the Yamabe Problem~\cite{LeeParker1987}.
The \defn{Paneitz operator}~\cite{Paneitz1983} is the fourth-order GJMS operator
\begin{align*}
 L_4 & := \Delta^2 - \delta\bigl(4P-(n-2)Jg\bigr)d + \frac{n-4}{2}Q_4 , \\
 Q_4 & := \Delta J - 2\lv P \rv^2 + \frac{n}{2}J^2 .
\end{align*}
It is especially important in studying compact Riemannian four-manifolds~\cite{Chang2005}.
We call $Q_4$ the \defn{fourth-order $Q$-curvature}.
In these formulas,
\begin{align*}
 P & := \frac{1}{n-2}\left( \Ric - J g \right) , & J & := \frac{R}{2(n-1)} ,
\end{align*}
are the \defn{Schouten tensor} and its trace, respectively, and
\begin{equation*}
 \delta\omega := -\nabla^a\omega_a
\end{equation*}
is the $L^2$-adjoint of the exterior derivative on functions.

The (noncritical) \defn{$Q$-curvature of order $2k$} on $n$-manifolds, $n > 2k$, is the scalar Riemannian invariant defined by $L_{2k}(1) = \frac{n-2k}{2}Q_{2k}$.
While $L_{2k}$ has minimal rank, there are other formally self-adjoint, conformally covariant, polydifferential operators associated to $Q_{2k}$ in the sense of Equation~\eqref{eqn:associated-operator}:

\begin{example}[GJMS operators of rank $4$]
 \label{ex:nonminimal-gjms-operator}
 \pushQED{\qed}
 Fix $k,n \in \bN$ such that $2k < n$.
 Comparison with Example~\ref{ex:gjms-operator} implies that
 \begin{equation*}
  \cD_{2k}( \cu^{\otimes 3} ) := \frac{(n-2k)^2}{32}\cu\cDelta^k\cu^2
 \end{equation*}
 determines a tangential operator on $\cmE\bigl[-\frac{n-2k}{4}\bigr]^{\otimes 3}$ by polarization, and the induced conformally covariant operator $D_{2k}$ satisfies
 \begin{equation*}
  D_{2k}\bigl( u^{\otimes 3} \bigr) = \frac{(n-2k)^2}{32} u L_{2k}\bigl(u^{2}\bigr) ,
 \end{equation*}
 is formally self-adjoint, and has rank $4$.
 Our normalizations are such that
 \begin{equation*}
  D_{2k}(1^{\otimes3}) = \left(\frac{n-2k}{4}\right)^3Q_{2k} . \qedhere
 \end{equation*}
 \popQED
\end{example}

Example~\ref{ex:gjms-operator} already indicates the utility of the ambient Laplacian for constructing Weyl operators.
Another benefit of the ambient Laplacian is that it leads to a simple proof (cf.\ \citelist{ \cite{FeffermanGraham2012}*{Proposition~7.9} \cite{Gover2006q}*{Theorem~1.2} }) of the factorization of the GJMS operators at Einstein manifolds.
This is because, as first observed by Matsumoto~\cite{Matsumoto2013}*{Lemma~4.1} in the context of the Lichnerowicz Laplacian, the Laplacian of the ambient space~\cite{FeffermanGraham2012}*{Equation~(7.12)}
\begin{equation}
 \label{eqn:einstein-ambient}
 \begin{split}
  \cmG & := (0,\infty)_t \times M \times (-\varepsilon,\varepsilon)_\rho , \\
  \cg & := 2\rho \, dt^2 + 2t \, dt \, d\rho + t^2\left(1+\lambda\rho/2\right)^2g ,
 \end{split}
\end{equation}
of an Einstein $n$-manifold with $\Ric = (n-1)\lambda g$ takes a particularly simple form.

\begin{proposition}
 \label{prop:factorization}
 Fix $k,n \in \bN$;
 if $n$ is even, then assume additionally that $k \leq n/2$.
 Let $(M,g)$ be an Einstein $n$-manifold with $\Ric = (n-1)\lambda g$.
 Then
 \begin{equation}
  \label{eqn:factorization}
  L_{2k} = \prod_{j=0}^{k-1} \left( \Delta + \frac{(n-2j-2)(n+2j)}{4}\lambda \right) .
 \end{equation}
\end{proposition}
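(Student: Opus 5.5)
We will compute $\cDelta$ directly on the explicit ambient space~\eqref{eqn:einstein-ambient} and then iterate. Write $h(\rho) := 1 + \lambda\rho/2$, so that $\cg = 2\rho\,dt^2 + 2t\,dt\,d\rho + t^2h^2g$. This metric is straight and in normal form, and for Einstein $g$ it is Ricci flat. Hence it is an ambient space for $(M,[g])$, and we may use it to compute $L_{2k}$ via Example~\ref{ex:gjms-operator}.

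\textbf{Step 1: the Laplacian on homogeneous functions.} Let $\cu \in \cmE[w]$, so $\cu = t^w f(x,\rho)$. The inverse metric has $\cg^{\rho\rho} = -2\rho/t^2$, $\cg^{t\rho} = 1/t$, $\cg^{tt}=0$, and the $M$-block $t^{-2}h^{-2}g^{-1}$. The volume density is proportional to $t^{n+1}h^n$. Using $\cDelta = -\lv\cg\rv^{-1/2}\partial_a(\lv\cg\rv^{1/2}\cg^{ab}\partial_b)$, we compute
\begin{equation*}
 -t^{2-w}\cDelta \cu = -2\rho f'' + \bigl(2w+2-n\bigr)f' - n\rho\frac{h'}{h}f' + \frac{nwh'}{h} f - h^{-2}\Delta f ,
\end{equation*}
up to checking the exact coefficients. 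Here $'$ denotes $\partial_\rho$. The key observation is that this expression is an operator in $\rho$ whose only $M$-dependence is through $\Delta$, with coefficients depending only on $\rho$ and $\lambda$. Consequently $\cDelta$ preserves the class of functions $t^{w}F(\rho,\Delta)\phi$ for $\phi$ an eigenfunction of $\Delta$.

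\textbf{Step 2: reduce to eigenfunctions.} It suffices to prove~\eqref{eqn:factorization} on each eigenfunction $\Delta\phi = \mu\phi$. Both sides are then polynomials in $\mu$ with coefficients depending only on $\lambda$, and one could alternatively argue by polynomial identity in $\mu$. On an eigenfunction, take the extension $\cu = t^{w}\phi$ with $w = -(n-2k)/2$. Then $\cDelta^k\cu = t^{w-2k}p_k(\rho)\phi$, where $p_j$ satisfies a recursion obtained from Step 1 with $f = p_{j-1}$ and weight $w-2(j-1)$. We have $L_{2k}\phi = p_k(0)\phi$. Since $\cDelta^k$ is tangential by~\eqref{eqn:sl2-algebra}, no harmonic correction of the extension is needed.

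\textbf{Step 3: the main obstacle.} The remaining task is to evaluate $p_k(0)$. Evaluating at $\rho=0$ involves $p_{j-1}'(0)$, so the recursion couples all $\rho$-derivatives. We will attempt a clever choice of extension instead. Choose $\cu = t^{w}h(\rho)^{s}\phi$ with exponent $s$ chosen so that $\cDelta\cu = t^{w-2}c\,h^{s-2}\phi$ exactly, with constant $c$. This is equivalent to the substitution $r$ with $t^2h^2$-type conformal variables, under which the ambient metric is a cone over hyperbolic/Einstein geometry, following Matsumoto~\cite{Matsumoto2013}*{Lemma~4.1}. Plugging $f = h^s$ into the formula of Step 1 gives terms in $h^{s}$, $h^{s-1}$, and $h^{s-2}$, because $\rho = 2(h-1)/\lambda$. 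We will choose $s$ to kill the $h^{s-1}$ term. We expect this to force $s = w$, or a linear expression in $w$, leaving $\cDelta\cu = t^{w-2}h^{s-2}\bigl(\mu + c_w\lambda\bigr)\phi$ with $c_w$ quadratic in $w$.

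Iterating with $w \mapsto w-2$ and $s \mapsto s-2$ then yields the product $\prod_{j=0}^{k-1}(\mu + c_{w-2j}\lambda)$. Evaluating at $\rho=0$, where $h=1$, gives $L_{2k}\phi$. The last check is that $c_{w-2j} = \tfrac{(n-2j-2)(n+2j)}{4}$ when $w = -\tfrac{n-2k}{2}$, after reindexing $j \mapsto k-1-j$. We would test this check first for $k=1$, where it must reproduce $L_2 = \Delta + \tfrac{n(n-2)}{4}\lambda$, using $J = n\lambda/2$.
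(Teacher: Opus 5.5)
Your strategy is the paper's. With $\tau := t(1+\lambda\rho/2)$, your extension $t^wh^s\pi^\ast u$ with $s=w$ is exactly $\tau^w\pi^\ast u$. The paper proves $\cDelta(\tau^w\pi^\ast u)=\tau^{w-2}\pi^\ast\bigl(\Delta u-w(n+w-1)\lambda u\bigr)$, iterates from $w=-\frac{n-2k}{2}$, and invokes tangentiality of $\cDelta^k$, just as you plan. However, two computations in your write-up are wrong as stated, and the checks you describe would fail.

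First, the Step~1 formula should read
\begin{equation*}
 -t^{2-w}\cDelta(t^wf) = -2\rho f'' + (n+2w-2)f' - 2n\rho\frac{h'}{h}f' + nw\frac{h'}{h}f - h^{-2}\Delta f .
\end{equation*}
As a check, the $f'$ coefficient at $\rho=0$ must vanish when $w=1-\frac{n}{2}$, since $\cDelta$ is tangential on $\cmE\bigl[1-\frac{n}{2}\bigr]$. Yours vanishes at $w=\frac{n-2}{2}$ instead. With your coefficients, $s=w$ does not cancel the $h^{s-1}$ terms. With the correct ones and $f=h^su$, the $h^{s-1}$ coefficient is $-\lambda(s-w)(s+\frac{n}{2})u$. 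The root $s=w$ is the one that persists under $w\mapsto w-2$, and the right-hand side then reduces to $h^{w-2}\bigl(w(n+w-1)\lambda u-\Delta u\bigr)$, so $c_w=-w(n+w-1)$. This holds for arbitrary $u\in C^\infty(M)$, so the eigenfunction reduction of Step~2 is unnecessary. As stated, it is also not justified when $M$ is noncompact.

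Second, with $w=-\frac{n-2k}{2}$ one gets
\begin{equation*}
 c_{w-2j} = \frac{(n-2k+4j)(n+2k-4j-2)}{4} = \frac{(n-1)^2-(2k-4j-1)^2}{4} ,
\end{equation*}
while the target factor is $\frac{(n-1)^2-(2j+1)^2}{4}$. So the substitution $j\mapsto k-1-j$ does not match the factors once $k\geq 3$; for $k=3$, $j=0$ it compares $\frac{(n+2)(n-4)}{4}$ with $\frac{n(n-2)}{4}$. What is true is that $\{\lv 2k-4j-1\rv \suchthat 0\leq j\leq k-1\}=\{1,3,\dotsc,2k-1\}$. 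Since all factors are polynomials in $\Delta$ and hence commute, the two products agree. With these corrections your argument is complete and coincides with the paper's.
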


\begin{proof}
 Let $(\cmG,\cg)$ be as in Display~\eqref{eqn:einstein-ambient}.
 Set $\tau := t(1+\lambda\rho/2)$.
 Direct computation implies that if $u \in C^\infty(M)$ and $w \in \bR$, then
 \begin{equation}
  \label{eqn:einstein-laplacian}
  \cDelta(\tau^w\pi^\ast u) = \tau^{w-2}\pi^\ast ( \Delta u - w(n+w-1)\lambda u ) ;
 \end{equation}
 see~\cite{CaseLinYuan2022or}*{Lemma~5.1}.
 Set $w := -(n-2k)/2$.
 Iterating Equation~\eqref{eqn:einstein-laplacian} yields
 \begin{align*}
  \cDelta^k(\tau^w\pi^\ast u) & = \tau^{w-2k}\pi^\ast (Du) , \\
  D & := \prod_{j=0}^{k-1} \left( \Delta + \frac{(n-2k+4j)(n+2k-4j-2)}{4}\lambda \right) .
 \end{align*}
 Since $\cDelta^k$ is tangential on $\cmE[w]$, we see that $L_{2k}=D$.
 The final conclusion follows by reindexing the product.
\end{proof}

The factorization~\eqref{eqn:factorization} plays an important role in studying Poincar\'e--Einstein manifolds;
e.g.\ in computing renormalized volumes~\cite{ChangQingYang2006} or deriving interior energy identities for the fractional GJMS operators~\cite{CaseChang2013}.
We conjecture that Equation~\eqref{eqn:factorization} \emph{characterizes} the GJMS operators:

\begin{conjecture}
 \label{conj:gjms-einstein}
 Let $D$ be a formally self-adjoint, conformally covariant, polydifferential operator of rank $2$ and homogeneity $-2k$ on $n$-manifolds, where $n \geq 2k$.
 If $D$ factors as in Equation~\eqref{eqn:factorization} on Einstein manifolds, then $D = L_{2k}$.
\end{conjecture}

A related conjecture~\citelist{ \cite{Branson2005}*{Remark~4} \cite{AIM2003-Conference}*{Problem~11} } asserts that $L_{2k}$ is the unique conformally covariant operator with leading-order term $\Delta^k$ that can be expressed as a linear combination of complete contractions of covariant derivatives of the Ricci tensor and the input function.

Conjecture~\ref{conj:gjms-einstein} is not trivial, as $L_{2k}$ is not the unique formally self-adjoint, conformally covariant, polydifferential operator of rank $2$, homogeneity $-2k$, and leading-order term $\Delta^k$.
For example, adding a nontrivial scalar conformal invariant of weight $-2k$ (e.g.\ a complete contraction of $W^{\otimes k}$) produces another such operator.
The following building blocks of Case and Yan~\cite{CaseYan2024} allow one to construct perturbations of $L_{2k}$ through higher-order terms:

\begin{lemma}
 \label{lem:case-yan}
 Let $(\cmG,\cg)$ be a straight and normal ambient space for a conformal $n$-manifold $(M,\kc)$ and let $\cf \in \cmE[w']$.
 Let $k \in \bN$ and $w \in \bR$.
 Define
 \begin{equation}
  \label{eqn:case-yan-operator}
  \cD_{2k,w,\cf}(\cu) := \sum_{a+b=k} \frac{k!}{a!b!}\frac{\Gamma\bigl(\frac{n-2k}{2}+w+a\bigr)\Gamma\bigl(b-w-w'-\frac{n-2k}{2}\bigr)}{\Gamma\bigl(\frac{n-2k}{2}+w\bigr) \Gamma\bigl(-w-w'-\frac{n-2k}{2}\bigr)}\cDelta^a(\cf\cDelta^b\cu) 
 \end{equation}
 on $\cmE[w]$.
 Then $\iota^\ast\cD_{2k,w,\cf}(\cu)$ depends only on $\iota^\ast\cu$, $\cf$, and $\cg$.
 If also $w = -\frac{n+w'-2k}{2}$, then the operator $D_{2k,w,\cf}^g \colon C^\infty(M) \to C^\infty(M)$ defined by
 \begin{equation*}
  D_{2k,w,\cf}^g(u) := \left. \cD_{2k,w,\cf}(t^w\pi^\ast u) \right|_{(M,g)}
 \end{equation*}
 is formally self-adjoint.
\end{lemma}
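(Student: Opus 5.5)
Two things must be established: tangentiality of $\cD_{2k,w,\cf}$ on $\cmE[w]$, and formal self-adjointness of the induced operator in the balanced case $w=-\frac{n+w'-2k}{2}$.

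\emph{Tangentiality.} Following the strategy described before Example~\ref{ex:gjms-operator}, I will show that the commutator $[\cD_{2k,w,\cf},Q]$ annihilates $\cmE[w-2]$ modulo $Q$. Fix $\cv\in\cmE[w-2]$. Applying Equation~\eqref{eqn:sl2-algebra} twice to each summand gives
\begin{equation*}
 \cDelta^a\bigl(\cf\cDelta^b(Q\cv)\bigr) = \cDelta^a\bigl(\cf Q\cDelta^b\cv\bigr) - 2b\,c_b\,\cDelta^a\bigl(\cf\cDelta^{b-1}\cv\bigr),
\end{equation*}
where $c_b$ is the scalar obtained by evaluating $2X+n+4-2b$ on $\cmE[w-2]$. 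Moving $Q$ through $\cDelta^a$ similarly yields
\begin{equation*}
 \cDelta^a(Q\cf\cDelta^b\cv) = Q\cDelta^a(\cf\cDelta^b\cv) - 2a\,c'_{a,b}\,\cDelta^{a-1}(\cf\cDelta^b\cv),
\end{equation*}
where $c'_{a,b}$ is the weight of $\cf\cDelta^b\cv$, namely $w'+w-2-2b$, inserted into $2X+n+4-2a$. Thus $[\cD,Q](\cv)$ is a linear combination of terms $\cDelta^{a}(\cf\cDelta^{b}\cv)$ with $a+b=k-1$. Collecting the coefficient of a fixed $(a,b)$ gives a two-term identity between contributions from $(a,b+1)$ and $(a+1,b)$. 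This identity should be exactly the Pochhammer relations satisfied by the Gamma ratios in~\eqref{eqn:case-yan-operator}: the factor $(\frac{n-2k}{2}+w+a)$ arises from one commutator and $(b-w-w'-\frac{n-2k}{2})$ from the other. Verifying this telescoping is the main computation. The key bookkeeping is that each factor $2X+n+4-2j$ evaluates to twice one of the Gamma arguments, and the binomial coefficients absorb the factors $2a$ and $2b$. Dependence on $\cg$ modulo $O^+(\rho^{n/2})$ in even dimensions should follow as for the GJMS operators, since only $k\le n/2$ powers appear. I expect this step to be routine once the indices are aligned.

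\emph{Self-adjointness.} In the balanced case, the weight of $u$ is $w$ and the output has weight $w+w'-2k=-n-w$, which is the dual weight. I will then show that $\int_M v\,D_{2k,w,\cf}(u)\,\dvol_g$ is symmetric in $u$ and $v$. My first approach is to use the explicit straight normal form $\cg=2\rho\,dt^2+2t\,dt\,d\rho+t^2g_\rho$ and to mimic the proof of~\cite{CaseYan2024}. Namely, I would realize the pairing as a renormalized ambient integral, $\int \cv\,\cD(\cu)$ over a region $\{\rho>\epsilon\}$ cut off in $t$, and integrate by parts $\cDelta^a$ onto $\cv$. By the symmetry $a\leftrightarrow b$ of the coefficients under $w\leftrightarrow -n-w'-w+2k$, which in the balanced case maps $w$ to itself, this gives $\int\cDelta^a\cv\,\cf\,\cDelta^b\cu$, which is manifestly symmetric after reindexing.

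The obstacle is controlling the boundary terms and the divergent finite-part parameters. Here the Gamma prefactors are designed to kill the log or pole terms. As an alternative, I could express $\cD$ via a generating identity with a Fefferman--Graham harmonic extension, in the spirit of Juhl's formula, and reduce symmetry to the known self-adjointness of the GJMS operators. I would attempt the renormalized integration by parts first.
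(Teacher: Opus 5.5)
Your tangentiality argument is the paper's. Push $Q$ through $\cDelta^b$ and then through $\cDelta^a$ using \eqref{eqn:sl2-algebra}. The two contributions to $\cDelta^a(\cf\cDelta^b\cv)$ with $a+b=k-1$ carry the factors $c_{b+1}=2\bigl(\frac{n-2k}{2}+w+a\bigr)$ and $c'_{a+1,b}=-2\bigl(b-w-w'-\frac{n-2k}{2}\bigr)$. The Pochhammer ratios make them cancel, so in fact $[\cD_{2k,w,\cf},Q]=0$ on $\cmE[w-2]$. Your remark about $\cg$ modulo $O^+(\rho^{n/2})$ is not needed, since the lemma allows dependence on $\cf$ and $\cg$. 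The self-adjointness part, however, has a genuine gap, in two places.

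First, the integral you propose to renormalize does not see $\int_M v\,D^g_{2k,w,\cf}u\,\dvol_g$. In the balanced case $\cv\,\cD_{2k,w,\cf}(\cu)\in\cmE[-n]$. In normal form it is therefore $t^{-n}G(x,\rho)$ with $G(\cdot,0)=v\,D^g_{2k,w,\cf}u$, while $\dvol_{\cg}=t^{n+1}\,dt\,d\rho\,\dvol_{g_\rho}$.
\begin{itemize}
 \item On $\{t_1<t<t_2,\ \rho>\epsilon\}$ the integral equals $\frac{t_2^2-t_1^2}{2}\int_M\int_{\epsilon}^{\rho_0} G\,d\rho\,\dvol_{g_\rho}$. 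It converges as $\epsilon\to0$, has no log or pole to extract, and is not the pairing.
 \item Because the weight is $-n$ rather than $-n-2$, the fluxes through $\{t=t_1\}$ and $\{t=t_2\}$ scale like $t^2$ and do not cancel. Integration by parts therefore leaves bulk $\rho$-integrals, not small boundary terms.
\end{itemize}
The paper instead restricts to the Poincar\'e manifold $\mathring X=\{Q=-1\}$. There $t=1/r$, $\rho=-r^2/2$ and $\dvol_{g_+}=r^{-n-1}\,dr\,\dvol_{g_\rho}$, so the integrand becomes $r^{-1}G$ and the coefficient of $\log(1/\varepsilon)$ is exactly the pairing. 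Equivalently, one inserts $Q^{-1}$ into your slab integral. That log coefficient is the quantity one keeps; the Gamma factors do not kill it.

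Second, even with the correct renormalization, integrating $\cDelta^a$ by parts does not yield $\sum C_{a,b}\int(\cDelta^a\cv)\,\cf\,\cDelta^b\cu$, where $C_{a,b}$ are the coefficients in \eqref{eqn:case-yan-operator}. Since $\cg=-ds^2+s^2g_+$ with $s=\sqrt{-Q}$, $\cDelta$ on $\cmE[\sigma]$ restricts to $\Delta_{g_+}+c_\sigma$, where $c_\sigma:=\sigma(n+\sigma)$. In the balanced case this gives
\[
 \cD_{2k,w,\cf}(\cu)|_{\mathring X}=\sum_{a+b=k}C_{a,b}\prod_{j=1}^{a}\bigl(\Delta_{g_+}+c_{w-2j}\bigr)\,F\prod_{j=0}^{b-1}\bigl(\Delta_{g_+}+c_{w-2j}\bigr)\,U ,
\]
with $U,F,V$ the restrictions of $\cu,\cf,\cv$.
\begin{itemize}
 \item Integrating by parts on $\mathring X$ moves $\prod_{j=1}^a(\Delta_{g_+}+c_{w-2j})$ onto $V$. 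That is the restriction of $\cDelta^a(-Q^{-1}\cv)$, not of $\cDelta^a\cv$, which uses the constants $c_w,\dots,c_{w-2a+2}$. So $C_{a,b}=C_{b,a}$ alone does not give symmetry.
 \item What is needed is that the displayed operator is symmetric on $L^2(\mathring X,g_+)$. This is a genuine identity in the coefficients: for $k=2$ it reads $\alpha(\alpha+1)(c_{w-4}-c_w)+2\alpha^2(c_w-c_{w-2})=0$ with $\alpha=\frac{n-4}{2}+w$.
 \item You also need that the boundary terms on $\{r=\varepsilon\}$ expand in powers of $\varepsilon$ with no $\log\varepsilon$ term.
\end{itemize}
Your Juhl-type alternative is the route of Chern and Yan, but as written it is only a pointer, not an argument.
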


\begin{proof}
  The first claim~\cite{CaseYan2024}*{Lemma~2.4} follows from Equation~\eqref{eqn:sl2-algebra}.
  The second claim~\cite{CaseYan2024}*{Lemma~4.1} follows by considering the logarithmic part of the polyhomogeneous expansion, in $\varepsilon$, of
 \begin{equation*}
  \int_{\{ r>\varepsilon \}} \left. \left( \cv \cD_{2k,w,\cf}(\cu) \right) \right|_{\mathring{X}} \dvol_{g_+} ,
 \end{equation*}
 where $(\mathring{X},g_+)$ is the Poincar\'e manifold determined by $(\cmG,\cg)$.
\end{proof}

\begin{remark}
 Chern and Yan~\cite{ChernYan2024}*{Theorem~1.2} gave an alternative proof of the formal self-adjointness of $D_{2k,w,\cf}^g$ by deriving a Juhl-type formula.
\end{remark}

Lemma~\ref{lem:case-yan} does \emph{not} give rise to conformally covariant differential operators on $M$, as the operator $\iota^\ast\cu \mapsto \iota^\ast\bigl(\cD_{2k,w,\cf}(\cu)\bigr)$ depends on $(\cmG,\cg)$ via its dependence on $\cf$.
However, choosing $\cf$ to be an ambient scalar Riemannian invariant which is independent of the ambiguity of $\cg$ \emph{does} produce a conformally covariant polydifferential operator of rank $2$:

\begin{example}[Polydifferential operators of rank two~\cite{CaseYan2024}]
 \label{ex:linear-operator}
 Fix $k,n \in \bN$ and let $\cI \in \cmE[-2\ell]$ be an ambient scalar Riemannian invariant;
 if $n$ is even, then assume additionally\footnote{
  If $\ell \geq 1$, then one need only assume that $k + \ell \leq n/2+1$.
  This ensures that the ambiguity of the ambient metric does not affect the induced operator~\cite{CaseYan2024}*{Corollary~2.5}.
 }
 that $k + \ell \leq n/2$.
 Lemma~\ref{lem:case-yan} implies that
 \begin{equation*}
  \cL_{2k,\cI} := \cD_{2k,-\frac{n-2k-2\ell}{2},\cI} \colon \cmE\left[ -\frac{n-2k-2\ell}{2} \right] \to \cmE\left[ -\frac{n+2k+2\ell}{2} \right]
 \end{equation*}
 induces a formally self-adjoint, conformally covariant, polydifferential operator of rank $2$ and homogeneity $-2k-2\ell$, which we denote by $L_{2k,\mI}$.
 \qed
\end{example}

If $k=0$ or $\ell=0$, then the operator $L_{2k,\mI}$ recovers multiplication by the scalar conformal invariant $\mI$ or a multiple of the GJMS operator $L_{2k}$, respectively.

Case, Lin, and Yuan~\cite{CaseLinYuan2022or} gave the first general construction of a family of conformally covariant polydifferential operators of rank $3$ via the ambient metric, generalizing operators found on the conformal $n$-sphere by Ovsienko and Redou~\cite{OvsienkoRedou2003} and Clerc~\cites{Clerc2016,Clerc2017}.
Following Case and Yan~\cite{CaseYan2024}, we use Lemma~\ref{lem:case-yan} to present the case when these operators are formally self-adjoint.

\begin{example}[Ovsienko--Redou operators~\cite{CaseLinYuan2022or}]
 \label{ex:ovsienko-redou-operator}
 Fix $k,n \in \bN$;
 if $n$ is even, then assume additionally that $k \leq n/2$.
 Consider the operator
 \begin{align}
  \label{eqn:ovsienko-redou-form} \cD_{2k}(\cu \otimes \cv) & := \sum_{a+b+c=k} A_{a,b,c} \cDelta^a \left( \bigl(\cDelta^b\cu\bigr) \bigl(\cDelta^c\cv\bigr) \right) , \\
  A_{a,b,c} & := \frac{k!}{a!b!c!} \frac{\Gamma\bigl( \frac{n-2k}{6}+a+b \bigr) \Gamma\bigl( \frac{n-2k}{6}+a+c \notag \bigr) \Gamma\bigl( \frac{n-2k}{6}+b+c \bigr)}{ \Gamma\bigl(\frac{n-2k}{6}\bigr) \Gamma\bigl(\frac{n+4k}{6}\bigr)^2 } ,
 \end{align}
 on $\cmE\bigl[-\frac{n-2k}{3}\bigr]^{\otimes2}$.
 Clearly $\cD_{2k}$ is symmetric.
 Direct computation yields
 \begin{equation}
  \label{eqn:or-to-cy}
  \cD_{2k}(\cu \otimes \cv) = \sum_{a+b=k} \frac{k!}{a!b!}\frac{\Gamma\bigl( \frac{n-2k}{6}+a \bigr) \Gamma\bigl( \frac{n-2k}{6}+b \bigr)^2}{\Gamma \bigl( \frac{n-2k}{6} \bigr) \Gamma\bigl( \frac{n+4k}{6} \bigr)^2} \cD_{2a,-\frac{n-2k}{3},\cDelta^b\cu}(\cv) .
 \end{equation}
 We conclude from Lemma~\ref{lem:case-yan} that $\cD_{2k}$ induces a formally self-adjoint, conformally covariant, polydifferential operator $D_{2k}$ of rank $3$ and homogeneity $-2k$.
 \qed
\end{example}

The argument in the proof of Proposition~\ref{prop:factorization} can be used to compute $L_{2k,\mI}$ and $D_{2k}$ at Einstein manifolds.
However, the resulting formula~\cite{CaseLinYuan2018b}*{Theorem~1.4} is not manifestly formally self-adjoint without further algebraic manipulation, and it is not presently clear what is the ``best'' way to organize terms.

Unlike the rank $2$ case~\cite{Branson1995}, there are conformally covariant bidifferential operators on the round $n$-sphere that are not formally self-adjoint.
Indeed, Ovsienko and Redou~\cite{OvsienkoRedou2003} classified these operators in the generic case, and Clerc~\cites{Clerc2016,Clerc2017} classified them in general.
Case, Lin, and Yuan showed~\cite{CaseLinYuan2022or} that these operators all have curved analogues on conformal manifolds when the homogeneity is at least $-n$.
We use Lemma~\ref{lem:case-yan} to discuss a particular family of such operators that arise when checking the Frank--Lieb Property for the Ovsienko--Redou operators:

\begin{example}[More bidifferential operators~\cite{CaseLinYuan2022or}]
 \label{ex:non-fsa}
 Fix $k,n \in \bN$;
 if $n$ is even, then assume additionally that $k \leq n/2$.
 Consider the operator
 \begin{align*}
  \cD_{2k}^\prime( \cu \otimes \cv) & := \sum_{a+b+c=k} B_{a,b,c} \cDelta^a \left( \bigl( \cDelta^b\cu \bigr) \bigl( \cDelta^c\cv \bigr) \right) , \\
  B_{a,b,c} & := \frac{k!}{a!b!c!} \frac{ \Gamma\bigl( \frac{n-2k+4}{6} + a + b \bigr) \Gamma\bigl( \frac{n-2k-2}{6} + a + c \bigr) \Gamma\bigl( \frac{n-2k-2}{6} + b + c \bigr)}{\Gamma\bigl( \frac{n-2k-2}{6} \bigr) \Gamma\bigl( \frac{n+4k+4}{6} \bigr)^2} ,
 \end{align*}
 on $\cmE\bigl[-\frac{n-2k+1}{3}\bigr] \otimes \cmE\bigl[-\frac{n-2k-2}{3}\bigr]$.
 Direct computation yields
 \begin{align*}
  \cD_{2k}^\prime(\cu \otimes \cv) & = \sum_{a+b=k} B_{a,b}' D_{2a,-\frac{n-2k+1}{3},\cDelta^b\cv}(\cu) \\
  & = \sum_{a+b=k} B_{a,b}'' D_{2a,-\frac{n-2k-2}{3},\cDelta^b\cu}(\cv) ,
 \end{align*}
 where
 \begin{align*}
  B_{a,b}' & := \frac{k!}{a!b!} \frac{ \Gamma\bigl(\frac{n-2k+4}{6}+a\bigr) \Gamma\bigl(\frac{n-2k-2}{6}+b\bigr)^2 }{ \Gamma\bigl(\frac{n-2k-2}{6}\bigr) \Gamma\bigl(\frac{n+4k+4}{6}\bigr)^2 } , \\
  B_{a,b}'' & := \frac{k!}{a!b!} \frac{ \Gamma\bigl(\frac{n-2k-2}{6}+a\bigr) \Gamma\bigl(\frac{n-2k-2}{6}+b\bigr) \Gamma\bigl(\frac{n-2k+4}{6}+b\bigr) }{ \Gamma\bigl(\frac{n-2k-2}{6}\bigr) \Gamma\bigl(\frac{n+4k+4}{6}\bigr)^2 } .
 \end{align*}
 Lemma~\ref{lem:case-yan} implies that $\cD_{2k}^\prime$ induces a conformally covariant polydifferential operator $D_{2k}^\prime$ of rank $3$ and homogeneity $-2k$.
 \qed
\end{example}

We conclude by discussing the operator (cf.\ \citelist{ \cite{CaseWang2016s}*{Lemma~3.1} \cite{Case2019fl}*{Remark~2.2} })
\begin{multline}
 \label{eqn:sigma2-operator}
 D(u^{\otimes 3}) := -\frac{1}{2}\delta\bigl( \lv\nabla u\rv^2 \, du \bigr) + \frac{n-4}{16}\left( u\Delta\lv\nabla u\rv^2 + \delta\bigl( (\Delta u^2)\,du \bigr) \right) \\
  + \frac{1}{2}\left( \frac{n-4}{4} \right)^2 u\delta\bigl( T_1(\nabla u^2) \bigr) + \left( \frac{n-4}{4} \right)^3\sigma_2u^3
\end{multline}
associated to the $\sigma_2$-curvature
\begin{equation*}
 \sigma_2 := \frac{1}{2}\left( J^2 - \lv P \rv^2 \right)
\end{equation*}
by Theorem~\ref{thm:cly}, where $T_1 := Jg - P$.
The operator $D$ is a formally self-adjoint, conformally covariant, polydifferential operator of rank $4$ and homogeneity $-4$ on $n$-manifolds.
It and a related Dirichlet form have been used to study sharp fully nonlinear Sobolev inequalities in Euclidean space~\cites{CaseWang2019,CaseWang2016s,Gonzalez2006r}.
We give an alternative realization of $D$ as a Weyl operator (cf.\ \cite{CaseLinYuan2018b}):

\begin{example}[The $\sigma_2$-operator~\cite{CaseLinYuan2018b}]
 \label{ex:sigma2-operator}
 Fix $n \in \bN$.
 Consider the operator
 \begin{equation*}
  \cD(\cu^{\otimes 3}) := -\frac{n-2}{16}\cu\cDelta^2\cu^2 + \frac{n}{96}\left( \cDelta^2\cu^3 + 3\cu^2\cDelta^2\cu \right) + \frac{n-4}{16}\left( \cu(\cDelta\cu)^2 + \cDelta(\cu^2\cDelta\cu) \right)
 \end{equation*}
 on $\cmE\bigl[ -\frac{n-4}{4} \bigr]^{\otimes 3}$.
 On the one hand, the identities
 \begin{align}
  \label{eqn:delta-to-Delta} 2\cdelta(\cu\,\cd\cv) & = \cDelta(\cu\cv) + \cu\cDelta\cv - \cv\cDelta\cu , \\
  \label{eqn:Delta-on-three} \cDelta(\cu\cv\cw) + \cu\cv\cDelta\cw + \cu\cw\cDelta\cv + \cv\cw\cDelta\cu & = \cu\cDelta(\cv\cw) + \cv\cDelta(\cu\cw) + \cw\cDelta(\cu\cv) ,
 \end{align}
 imply that
 \begin{equation*}
  \cD(\cu^{\otimes3}) = -\frac{1}{2}\cdelta\left( \lv\cnabla\cu\rv^2\,\cd\cu \right) + \frac{n-4}{16}\left( \cu\cDelta\lv\cnabla\cu\rv^2 + \cdelta\bigl( (\cDelta\cu^2) \, \cd\cu \bigr) \right) .
 \end{equation*}
 Hence $D$ is induced by $\cD$~\cite{Case2019fl}*{Lemma~2.1}.
 On the other hand, we may write
 \begin{equation}
  \label{eqn:sigma2-operator-to-case-yan}
  \begin{split}
   \cD(\cu \otimes \cv^{\otimes2}) & = -\frac{n-2}{24}\cv\cDelta^2(\cu\cv) + \frac{1}{6(n-4)}\cD_{4,-\frac{n-4}{4},\cv^2}(\cu) \\
    & \quad + \frac{n-4}{6n}\cD_{2,-\frac{n-4}{4},\cv\cDelta\cv}(\cu) - \frac{n-2}{48}\cu\cDelta^2(\cv^2) \\
    & \quad + \frac{n}{48}\cu\cv\cDelta^2\cv + \frac{n-4}{48}\cu(\cDelta\cv)(\cDelta\cw) .
  \end{split}
 \end{equation}
 Lemma~\ref{lem:case-yan} implies that $D$ is a formally self-adjoint, conformally covariant, polydifferential operator of rank $4$ and homogeneity $-4$.
 \qed
\end{example}

Case and Cieslak~\cite{CaseCieslak2025} gave additional examples of conformally covariant polydifferential operators of rank $4$ that are conjecturally formally self-adjoint.

We conclude by discussing the renormalized volume coefficients:

\begin{example}[The $v_k$-operators]
 \label{ex:vk}
 The renormalized volume coefficient $v_k$ is a CVI~\cite{Graham2009}*{Theorem~1.5} that recovers the $\sigma_k$-curvature when $k \leq 2$ or when evaluated at locally conformally flat manifolds~\cite{GrahamJuhl2007}*{Proposition~1}.
 The associated operators have rank $2k$ and homogeneity $-2k$~\cite{CaseLinYuan2018b}*{Proposition~5.5}, and are Weyl operators~\cite{CaseLinYuan2018b}*{Theorem~6.6}.
\end{example}

%% file: frank-lieb.tex
\section{The Frank--Lieb argument}
\label{sec:frank-lieb}

The Frank--Lieb argument~\cites{FrankLieb2012a,FrankLieb2012b} classifies local minimizers of sharp Sobolev inequalities by exploiting conformal covariance and carefully studying the spectral properties of the underlying operator.
In this section we describe the generalization of this argument to polydifferential operators~\cite{Case2019fl}.

Conformal covariance helps produce useful test functions for the second variation at a local minimizer.
To that end, given $w < 0$, we say that $u \in C^\infty(S^n)$ is \defn{$w$-balanced} if
\begin{equation*}
 \int_{S^n} \vec{x} \lv u \rv^{-n/w} \dvol_{d\theta^2} = 0 ,
\end{equation*}
where $\vec{x}$ is the position vector on $S^n$, regarded as the unit sphere in $\bR^{n+1}$ with the induced metric $d\theta^2$ of constant sectional curvature $1$.
In particular, if $u$ is $w$-balanced, then for each $i \in \{ 0, \dotsc, n \}$ one computes that
\begin{equation*}
 \left. \frac{d}{dt} \right|_{t=0} \int_{S^n} \lv (1+tx^i)u \rv^{-n/w} \dvol_{d\theta^2} = 0 .
\end{equation*}
One can use the action of the conformal group of $S^n$ to make a function $w$-balanced:

\begin{lemma}
 \label{lem:balancing}
 Fix $w < 0$ and consider the round $n$-sphere $(S^n,d\theta^2)$.
 For each $u \in C^\infty(S^n)$, there is a $\Phi \in \Conf(S^n,\kc_0)$ such that $u \cdot \Phi$ is $w$-balanced.\footnote{
  The action $u \cdot \Phi$ is defined in terms of $n$ and $w$ by Equation~\eqref{eqn:action}.
 }
\end{lemma}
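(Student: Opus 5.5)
The plan is to reduce the statement to a center-of-mass problem for a fixed measure on $S^n$ and then solve that problem with a topological degree argument, following Frank and Lieb. If $u \equiv 0$ there is nothing to prove, so assume $u \not\equiv 0$. Define the finite Borel measure
\begin{equation*}
 d\mu := \lv u \rv^{-n/w} \dvol_{d\theta^2} .
\end{equation*}
Since $w<0$, the exponent $-n/w$ is positive. Hence $\mu$ is nonzero, absolutely continuous with respect to $\dvol_{d\theta^2}$, and in particular has no atoms.

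\emph{Step 1: the action transports $\mu$ by pullback.} From Equation~\eqref{eqn:action} we have $\lv u\cdot\Phi \rv^{-n/w} = \lv J_\Phi \rv \, \Phi^\ast\lv u \rv^{-n/w}$. Combined with $\dvol_{\Phi^\ast d\theta^2} = \lv J_\Phi \rv \dvol_{d\theta^2}$ and the change of variables formula, this gives
\begin{equation*}
 \int_{S^n} \vec{x} \, \lv u\cdot\Phi \rv^{-n/w} \dvol_{d\theta^2} = \int_{S^n} \bigl(\vec{x}\circ\Phi^{-1}\bigr) \, d\mu .
\end{equation*}
Thus $u\cdot\Phi$ is $w$-balanced if and only if the pushforward $(\Phi^{-1})_\ast\mu$ has center of mass $0 \in \bR^{n+1}$. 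The problem therefore becomes: find a conformal map $\Psi$ of $S^n$ with $\int_{S^n} \vec{x}\circ\Psi \, d\mu = 0$, and then take $\Phi := \Psi^{-1}$.

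\emph{Step 2: a family of conformal dilations.} For each $\xi$ in the open unit ball $B^{n+1}$, let $\Psi_\xi \in \Conf(S^n,\kc_0)$ be the standard conformal dilation toward $\xi/\lv\xi\rv$. Concretely, this is the boundary action of the hyperbolic isometry of $B^{n+1}$ translating $0$ to $\xi$; in stereographic coordinates from $-\xi/\lv\xi\rv$ it is a Euclidean dilation by a factor tending to infinity as $\lv\xi\rv \to 1$. Set $\Psi_0 := \Id$, and define
\begin{equation*}
 F(\xi) := \frac{1}{\mu(S^n)} \int_{S^n} \vec{x}\circ\Psi_\xi \, d\mu .
\end{equation*}
Then $F$ is continuous on $B^{n+1}$ and takes values in the closed ball.

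\emph{Step 3: boundary behavior and degree.} As $\xi \to \eta \in S^n$, the maps $\Psi_\xi$ converge to the constant map $\eta$ uniformly on compact subsets of $S^n \setminus \{-\eta\}$. Because $\mu$ has no atoms, $\mu(\{-\eta\}) = 0$, so dominated convergence gives $F(\xi) \to \eta$. Hence $F$ extends continuously to the closed ball $\overline{B^{n+1}}$ with $F = \Id$ on $\partial B^{n+1}$. A continuous self-map of the closed ball that restricts to the identity on the boundary has degree one, so it is surjective; this follows from Brouwer, since otherwise radial projection away from a missed interior point would give a retraction of the ball onto the sphere. Consequently $F(\xi) = 0$ for some $\xi \in B^{n+1}$, and $\Phi := \Psi_\xi^{-1}$ makes $u\cdot\Phi$ $w$-balanced.

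The main obstacle is Step 3: proving that the extension is continuous up to the boundary, uniformly in the direction of approach $\xi \to \eta$ and not just along radial rays. This is where absolute continuity of $\mu$ is used. I would handle it by fixing the explicit formula for $\Psi_\xi$ and estimating: on the complement of a small cap around $-\xi/\lv\xi\rv$, the image under $\Psi_\xi$ lies in a cap around $\xi/\lv\xi\rv$ whose radius tends to $0$ as $\lv\xi\rv\to1$, while the $\mu$-measure of the small caps around $-\eta$ tends to $0$ uniformly.
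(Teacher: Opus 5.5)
Your proposal is correct and follows the paper's proof. The paper makes the same change of variables $\lvert u\cdot\Phi\rvert^{-n/w} = \lvert J_\Phi\rvert\,\Phi^\ast\lvert u\rvert^{-n/w}$ and cites Frank--Lieb's Lemma~B.1, a Brouwer-type argument giving a conformal map that recenters the measure $\lvert u\rvert^{-n/w}\dvol_{d\theta^2}$; your Steps~2--3 reprove exactly that lemma. The boundary continuity you flag as the main obstacle is already settled by your dominated-convergence argument: for any sequence $\xi_j\to\eta$ one has $\Psi_{\xi_j}(x)\to\eta$ for every $x\neq-\eta$, so no uniform cap estimates are needed.
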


\begin{proof}
 We need only consider the case $u \not= 0$.
 An application~\cite{FrankLieb2012b}*{Lemma~B.1} of Brouwer's fixed point theorem implies that there is a $\Phi \in \Conf(S^n)$ such that
 \begin{equation*}
  \int_{S^n} (\vec{x} \circ \Phi^{-1} ) \lv u \rv^{-n/w} \dvol_{d\theta^2} = 0 .
 \end{equation*}
 Hence, by change of variables,
 \begin{equation*}
  0 = \int_{S^n} \vec{x} \lv \Phi^\ast u \rv^{-n/w} \lv J_\Phi \rv \dvol_{d\theta^2} = \int_{S^n} \vec{x} \lv u \cdot \Phi \rv^{-n/w} \dvol_{d\theta^2} . \qedhere
 \end{equation*}
\end{proof}

The role of the Frank--Lieb Property is to extract consequences from the nonnegativity of the second variation at a local minimizer:

\begin{proof}[Proof of Theorem~\ref{thm:frank-lieb}]
 Let $u \in U_{\sC}^{d\theta^2} \setminus \{ 0 \}$ be a local minimizer of $\mF^{d\theta^2}$.
 By scaling, we may assume that $\lV u \rV_{r^\ast} = 1$.
 Since $\mF^{d\theta^2}$ is $\Conf(S^n)$-invariant, Lemma~\ref{lem:balancing} implies that we may also assume that $u$ is $-(n-2k)/r$-balanced.
 
 Since $u$ is a local minimizer of $\mF^{d\theta^2}$, we compute that
 \begin{align*}
  0 & \leq \frac{1}{r(r-1)}\left. \frac{d^2}{dt^2} \right|_{t=0} \mF^{d\theta^2}\bigl( (1+tv)u \bigr) \\
   & = \kD^{d\theta^2}( (uv)^{\otimes 2} \otimes u^{\otimes (r-2)}) - \frac{(r-1)n+2k}{(r-1)(n-2k)}\kD^{d\theta^2}(u^{\otimes r})\int_{S^n} v^2\lv u \rv^{\frac{rn}{n-2k}} \dvol_{d\theta^2}
 \end{align*}
 for all $v \in C^\infty(S^n)$ such that
 \begin{equation}
  \label{eqn:volume-preserved}
  \int_{S^n} v\lv u \rv^{\frac{rn}{n-2k}} \dvol_{d\theta^2} = 0 .
 \end{equation}
 Since $u$ is balanced, the functions $v=x^i$ satisfy Equation~\eqref{eqn:volume-preserved}.
 Thus
 \begin{equation*}
  \sum_{i=0}^n \kD^{d\theta^2}\bigl( (x^iu)^{\otimes 2} \otimes u^{\otimes (r-2)}\bigr) \geq \frac{(r-1)n+2k}{(r-1)(n-2k)}\kD^{d\theta^2}(u^{\otimes r}) .
 \end{equation*}
 Combining this with the commutator identity
 \begin{equation*}
  \sum_{i=0}^n \kD^{d\theta^2}( (x^iu)^{\otimes 2} \otimes u^{\otimes (r-2)}) = \kD^{d\theta^2}(u^{\otimes r}) + \sum_{i=0}^n \int_{S^n} x^iu [D^{d\theta^2},x^i]_1(u^{\otimes(r-1)}) \dvol_{d\theta^2}
 \end{equation*}
 yields
 \begin{equation*}
  \sum_{i=0}^n \int_{S^n} x^iu [D^{d\theta^2},x^i](u^{\otimes(r-1)}) \dvol_{d\theta^2} \geq \frac{2rk}{(r-1)(n-2k)}\kD^{d\theta^2}(u^{\otimes r}) .
 \end{equation*}
 Since $(D,\sC)$ satisfies the Frank--Lieb Property, we conclude that $u$ is constant.
\end{proof}

\begin{remark}
 \label{rk:fl-regularity}
 The proof of Theorem~\ref{thm:frank-lieb} only requires that $u$ is a local minimizer of a continuous extension of $\mF$ to a completion of $U_{\sC}^{d\theta^2}$ for which Condition~\eqref{eqn:fl-poincare} remains valid.
 In particular, Theorem~\ref{thm:frank-lieb} establishes regularity of the minimizers for the norm inequality for the embedding $W^{k,2}(S^n) \hookrightarrow L^{2n/(n-2k)}(S^n)$, $n > 2k$, without applying elliptic theory (cf.\ \cite{Yan2022}).
\end{remark}

There are presently two cases where the Frank--Lieb Property is verified~\cite{Case2019fl}:
the GJMS operators and the operator associated to the $\sigma_2$-curvature.
Additionally, the commutator $\sum x^i[D,x^i]_1$ has been computed for the Ovsienko--Redou operators~\cite{CaseLinYuan2022or}.
More generally, this commutator is easily computed for Weyl operators:

\begin{lemma}
 \label{lem:ambient-commutator-computation}
 Let $D$ be a formally self-adjoint, conformally covariant, polydifferential Weyl operator of rank $r$ and homogeneity $-2k$ on $n$-manifolds.
 Then
 \begin{multline*}
  \sum_{i=0}^n x^i[D^{d\theta^2},x^i]_1\bigl(u \otimes v^{\otimes(r-2)}\bigr) \\
  = \sum_{i=0}^n \left. \Bigl( \cx^i\ct \cD\bigl( \cx^i\cu \otimes \cv^{\otimes(r-2)}\bigr) - (\cx^i)^2\cD\bigl( \ct\cu \otimes \cv^{\otimes(r-2)}\bigr) \Bigr) \right|_{(S^n,d\theta^2)} ,
 \end{multline*}
 where $\cD \colon \cmE\bigl[-\frac{n-2k}{r}\bigr]^{\otimes(r-1)} \to \cmE\bigl[-\frac{(r-1)n+2k}{r}\bigr]$ defines $D$ holographically,
 \begin{equation*}
  (\cmG,\cg) = \Bigl( \bR_{\ct} \times \bR_{\cx}^{n+1} , -d\ct^2 + \sum_{i=0}^n (dx^i)^2 \Bigr)
 \end{equation*}
 is the flat ambient space for $(S^n,\kc_0)$, and $\cu \in \cmE\bigl[-\frac{n-2k}{r}-1\bigr]$ and $\cv \in \cmE\bigl[-\frac{n-2k}{r}\bigr]$ are homogeneous extensions of $u$ and $v$, respectively.
\end{lemma}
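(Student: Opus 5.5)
The plan is to unwind the definition~\eqref{eqn:induced-operator} of a Weyl operator on the flat ambient space of $(S^n,\kc_0)$. I would choose homogeneous extensions of $x^iu$ and $u$ that are built from a single extension $\cu$ of weight $w-1$, where $w := -\frac{n-2k}{r}$.

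\textbf{Step 1: identify the slice.} In the flat ambient space $\bigl(\bR_{\ct}\times\bR^{n+1}_{\cx},\,-d\ct^2+\sum(d\cx^i)^2\bigr)$, the metric bundle $\mG$ is the future null cone $\{\ct=\lv\cx\rv>0\}$. The dilation $\delta_s$ is multiplication by $s$. The section of $\mG$ corresponding to the round metric $d\theta^2$ is $\{\ct=1,\ \lv\cx\rv=1\}$: the tautological tensor pulled back along $\vec{x}\mapsto(1,\vec{x})$ is the induced metric on the unit sphere. Hence for any homogeneous $\cf$, the restriction $\cf\rv_{(S^n,d\theta^2)}$ is the function $\vec{x}\mapsto\cf(1,\vec{x})$. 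In particular, $\ct\rv_{(S^n,d\theta^2)}=1$ and $\cx^i\rv_{(S^n,d\theta^2)}=x^i$, with $\ct,\cx^i\in\cmE[1]$.

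\textbf{Step 2: choose extensions.} Let $\cu\in\cmE[w-1]$ extend $u$ and let $\cv\in\cmE[w]$ extend $v$. Multiplication by a weight-one function raises weight by one. So $\ct\cu\in\cmE[w]$ is a homogeneous extension of $u$, and $\cx^i\cu\in\cmE[w]$ is a homogeneous extension of $x^iu$, by Step~1. Tangentiality of $\cD$ says the restriction of $\cD$ depends only on $\iota^\ast$ of its inputs. Therefore
\begin{align*}
 D^{d\theta^2}\bigl(x^iu\otimes v^{\otimes(r-2)}\bigr) &= \cD\bigl(\cx^i\cu\otimes\cv^{\otimes(r-2)}\bigr)\bigr|_{(S^n,d\theta^2)} ,\\
 D^{d\theta^2}\bigl(u\otimes v^{\otimes(r-2)}\bigr) &= \cD\bigl(\ct\cu\otimes\cv^{\otimes(r-2)}\bigr)\bigr|_{(S^n,d\theta^2)} .
\end{align*}
The first argument may be placed in any slot, since $D$ is symmetric.

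\textbf{Step 3: assemble.} By definition,
\begin{equation*}
 x^i[D,x^i]_1\bigl(u\otimes v^{\otimes(r-2)}\bigr) = x^iD\bigl(x^iu\otimes v^{\otimes(r-2)}\bigr) - (x^i)^2D\bigl(u\otimes v^{\otimes(r-2)}\bigr) .
\end{equation*}
Restriction to the slice $\{\ct=1\}$ is multiplicative, and $\ct=1$ there. So I can write $x^i=(\cx^i\ct)\rv_{(S^n,d\theta^2)}$ in the first term and $(x^i)^2=(\cx^i)^2\rv_{(S^n,d\theta^2)}$ in the second. Inserting the formulas from Step~2 and summing over $i$ gives the claimed identity.

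\textbf{Expected difficulty.} I expect no real obstacle. The only points needing care are, first, confirming that the $d\theta^2$-section is exactly $\{\ct=1\}$ on the cone, and second, checking that restriction of a product of homogeneous functions equals the product of restrictions even when their weights differ. The second holds because restriction is just pointwise evaluation at $(1,\vec{x})$.
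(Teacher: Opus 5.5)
Your proposal is correct and is essentially the paper's proof. You use that $\ct,\cx^i\in\cmE[1]$ restrict to $1$ and $x^i$ on the slice $\{\ct=1,\lv\cx\rv=1\}$, so $\ct\cu$ and $\cx^i\cu$ are homogeneous extensions of weight $-\frac{n-2k}{r}$ of $u$ and $x^iu$, and tangentiality of $\cD$ gives the two restriction formulas; your remarks on the round-metric section and on multiplicativity of restriction only spell out what the paper summarizes as ``the conclusion readily follows.''
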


\begin{proof}
 Note that $\ct \in \cmE[1]$ and $\cx^i \in \cmE[1]$ are homogeneous extensions of the constant function and the standard Cartesian coordinates, respectively, on $(S^n,d\theta^2)$, regarded as $\left\{ (\ct,\cx) \suchthat \ct=1, \lv \cx \rv^2 = 1 \right\} \subset \cmG$.
 Then $\ct\cu, \cx^i\cu \in \cmE\bigl[-\frac{n-2k}{r}\bigr]$ are homogeneous extensions of $u,x^iu$, respectively.
 Therefore
 \begin{align*}
  D^{d\theta^2}\bigl(u \otimes v^{\otimes(r-2)} \bigr) & = \left. \cD \bigl( \ct\cu \otimes \cv^{\otimes(r-2)} \bigr) \right|_{(S^n,d\theta^2)} , \\
  D^{d\theta^2}\bigl( x^iu \otimes v^{\otimes(r-2)} \bigr) & = \left. \cD \bigl( \cx^i\cu \otimes \cv^{(r-2)} \bigr) \right|_{(S^n,d\theta^2)} .
 \end{align*}
 The conclusion readily follows.
\end{proof}

The next lemma allows one to easily compute commutators for Weyl operators expressed in terms of ambient Laplacians.
Our proof simplifies that of Case, Lin, and Yuan~\cite{CaseLinYuan2022or}*{Lemma~4.1}.

\begin{lemma}
 \label{lem:commutator-building-blocks}
 Let $(\cmG,\cg)$ denote the flat ambient space of the standard conformal $n$-sphere.
 Let $k \in \bN$.
 Then
 \begin{equation}
  \label{eqn:commutator-building-block-one-laplacian}
  \sum_{i=0}^n \cx^i\left( \ct\,[\cDelta^k,\cx^i] - \cx^i[\cDelta^k,\ct\,] \right) = -2k\ct X\cDelta^{k-1} + 2kQ(\cnabla\ct\,)\cDelta^{k-1}
 \end{equation}
 as operators on $C^\infty(\cmG)$.
 Additionally, if $\cu,\cv \in C^\infty(\cmG)$, then
 \begin{multline}
  \label{eqn:commutator-building-lbock-two-laplacian}
  \sum_{i=0}^n \cx^i\left( \ct\cDelta^k(\cu(\cnabla\cx^i)\cv) - \cx^i\cDelta^k(\cu(\cnabla\ct\,)\cv) \right) \\
   = \ct\cDelta^k( \cu ( X - k)\cv) - k\ct\cDelta^{k-1}(\cu\cDelta\cv - \cv\cDelta\cu) - Q\cDelta^k(\cu(\cnabla\ct)\cv) .
 \end{multline}
\end{lemma}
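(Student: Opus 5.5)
The plan is a direct computation in the flat ambient space $\bR_{\ct}\times\bR^{n+1}_{\cx}$ with $\cg = -d\ct^2+\sum (d\cx^i)^2$. Here $Q=\lv\cx\rv^2-\ct^2$, $X=\ct\partial_{\ct}+\sum\cx^i\partial_{\cx^i}$, and, with our sign convention, $\cDelta = \partial_{\ct}^2-\sum_i\partial_{\cx^i}^2$. I will read $(\cnabla f)$, acting on a function, as the derivation $\cw\mapsto\langle\cnabla f,\cnabla\cw\rangle$, so that $\cnabla\ct=-\partial_{\ct}$ and $\cnabla\cx^i=\partial_{\cx^i}$. Everything will rest on three elementary facts.
\begin{enumerate}
 \item If $f$ is linear in the Cartesian coordinates, then $\cnabla^2 f=0$. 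Hence $[\cDelta,f]=-2(\cnabla f)$, where $\cnabla f$ is a parallel vector field commuting with $\cDelta$. Iterating gives $[\cDelta^k,f]=-2k\,\cDelta^{k-1}(\cnabla f)$.
 \item $\sum_i\cx^i(\cnabla\cx^i)=X+\ct(\cnabla\ct)$ and $\sum_i(\cx^i)^2=Q+\ct^2$.
 \item $[X,\cDelta]=-2\cDelta$, together with the product identity~\eqref{eqn:delta-to-Delta}.
\end{enumerate}

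For \eqref{eqn:commutator-building-block-one-laplacian}, substitute fact (i) with $f=\cx^i$ and $f=\ct$. The left-hand side becomes
\[
 -2k\ct\sum_i\cx^i(\cnabla\cx^i)\cDelta^{k-1}+2k\sum_i(\cx^i)^2(\cnabla\ct)\cDelta^{k-1},
\]
where $\cDelta^{k-1}$ commutes with the parallel fields. Fact (ii) turns this into
\[
 -2k\ct X\cDelta^{k-1}-2k\ct^2(\cnabla\ct)\cDelta^{k-1}+2k(Q+\ct^2)(\cnabla\ct)\cDelta^{k-1}.
\]
The $\ct^2$ terms cancel, which gives the claim.

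For \eqref{eqn:commutator-building-lbock-two-laplacian}, move the outer factors $\cx^i$ inside $\cDelta^k$ using $\cx^i\cDelta^k=\cDelta^k\cx^i+2k\cDelta^{k-1}(\cnabla\cx^i)$ from (i). This produces an \emph{inner} sum
\[
 \ct\,\cDelta^k\Bigl(\cu\sum_i\cx^i(\cnabla\cx^i)\cv\Bigr)-\cDelta^k\Bigl(\sum_i(\cx^i)^2\,\cu(\cnabla\ct)\cv\Bigr),
\]
plus lower-order terms
\[
 2k\ct\,\cDelta^{k-1}\sum_i(\cnabla\cx^i)\bigl(\cu(\cnabla\cx^i)\cv\bigr)-2k\cDelta^{k-1}\sum_i(\cnabla\cx^i)\bigl(\cx^i\cu(\cnabla\ct)\cv\bigr).
\]
In the inner sum, fact (ii) replaces $\sum\cx^i\cnabla\cx^i$ by $X+\ct\cnabla\ct$ and $\sum(\cx^i)^2$ by $Q+\ct^2$. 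Then I would pull $\ct$ and $Q$ back out through $\cDelta^k$, using (i) for $\ct$ and \eqref{eqn:sl2-algebra} for $Q$. The pieces $\ct\,\cDelta^k(\cu X\cv)$ and $-Q\cDelta^k(\cu(\cnabla\ct)\cv)$ then appear directly. What remains is a collection of $\cDelta^{k-1}$ terms:
\begin{itemize}
 \item commutator terms of the form $\cDelta^{k-1}(X+\text{const})$ coming from \eqref{eqn:sl2-algebra};
 \item the term $2k\ct\,\cDelta^{k-1}\bigl(-\cdelta(\cu\,\cd\cv)\bigr)$ arising from $\sum_i(\cnabla\cx^i)(\cu\,\cnabla\cx^i\cv)$, once it is completed with the $\ct$-direction to a full ambient divergence.
\end{itemize}

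The main obstacle is the bookkeeping in this last step. One must check three things. First, the $X$-terms, after commuting via (iii) and using that $(\cnabla\ct)$ and $\cDelta$ commute, collapse to exactly $-k\ct\cDelta^k(\cu\cv)$ plus terms cancelling the $\ct^2$ contributions; this yields the $-k$ in $(X-k)$. Second, the divergence $2\cdelta(\cu\,\cd\cv)$ converts via \eqref{eqn:delta-to-Delta} into $\cDelta(\cu\cv)+\cu\cDelta\cv-\cv\cDelta\cu$. Third, the $\cDelta(\cu\cv)$ part combines with the $-k$ term so that only $-k\ct\cDelta^{k-1}(\cu\cDelta\cv-\cv\cDelta\cu)$ survives. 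I would organize the computation by treating $k=1$ first and then inducting on $k$: apply $\cDelta$ on the left to the $k$ identity and use (i) and \eqref{eqn:sl2-algebra} to absorb the new commutators. Tracking the constant shifts carefully is where errors would occur.
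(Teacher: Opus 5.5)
\textbf{The second identity has a genuine gap.} Your proof of \eqref{eqn:commutator-building-block-one-laplacian} is correct and is the paper's argument. For \eqref{eqn:commutator-building-lbock-two-laplacian}, the route you sketch contains a wrong expansion and misidentifies where the $-k$ comes from. The bookkeeping that would settle it is never carried out.

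Write $w := \cu(\cnabla\ct\,)\cv$. Commuting \emph{two} linear factors through $\cDelta^k$ produces two commutator terms, not one:
\[
 (\cx^i)^2\cDelta^k w = \cDelta^k\bigl((\cx^i)^2w\bigr) + 2k\cDelta^{k-1}(\cnabla\cx^i)(\cx^iw) + 2k\,\cx^i\cDelta^{k-1}(\cnabla\cx^i)w .
\]
Your display drops the last term. Already for $k=1$ one has $\lv\cx\rv^2\cDelta w = \cDelta(\lv\cx\rv^2w) + 2(n+1)w + 4\sum_i\cx^i\partial_{\cx^i}w$, whereas your formula gives coefficient $2$ in the last sum.

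More importantly, pushing $\sum_i(\cx^i)^2$ into $\cDelta^k$ and pulling $Q+\ct^2$ back out with \eqref{eqn:sl2-algebra} is a round trip with zero net contribution. This is because $\sum_i(\cx^i)^2\cDelta^kw=(Q+\ct^2)\cDelta^kw$ holds trivially. Moreover, every term the round trip generates is an operator applied to $w$, so it vanishes when $\cv=1$. Hence it cannot produce $-k\ct\cDelta^k(\cu\cv)$, as your first check asserts. Your first and third checks are also mutually inconsistent: if the $\cDelta(\cu\cv)$ part coming from \eqref{eqn:delta-to-Delta} were absorbed by that $-k$ term, no $-k$ would remain in $(X-k)$.

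\textbf{The missing idea is not to move $(\cx^i)^2$ at all.}
\begin{enumerate}
 \item In the first term, push only the single $\cx^i$ inside. This gives $\cDelta^k(\cu X\cv)+\cDelta^k(\ct w)+2k\cDelta^{k-1}\sum_i(\cnabla\cx^i)\bigl(\cu(\cnabla\cx^i)\cv\bigr)$.
 \item Write $\sum_i(\cnabla\cx^i)\bigl(\cu(\cnabla\cx^i)\cv\bigr)=-\cdelta(\cu\,\cd\cv)+(\cnabla\ct\,)w$.
 \item By your fact (i), $\cDelta^k(\ct w)=\ct\cDelta^kw-2k\cDelta^{k-1}(\cnabla\ct\,)w$. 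The two $(\cnabla\ct\,)w$ terms therefore cancel exactly, giving $\sum_i\cx^i\cDelta^k\bigl(\cu(\cnabla\cx^i)\cv\bigr)=\cDelta^k(\cu X\cv)-2k\cDelta^{k-1}\cdelta(\cu\,\cd\cv)+\ct\cDelta^kw$.
 \item Multiply by $\ct$ and subtract $(Q+\ct^2)\cDelta^kw$. The $\ct^2\cDelta^kw$ terms cancel, leaving $-Q\cDelta^kw$.
 \item Finally, \eqref{eqn:delta-to-Delta} rewrites $-2k\ct\cDelta^{k-1}\cdelta(\cu\,\cd\cv)$ as $-k\ct\cDelta^k(\cu\cv)-k\ct\cDelta^{k-1}(\cu\cDelta\cv-\cv\cDelta\cu)$. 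The first piece is precisely the $-k$ in $(X-k)$.
\end{enumerate}
This argument needs neither \eqref{eqn:sl2-algebra}, nor $[X,\cDelta]=-2\cDelta$, nor induction on $k$.
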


\begin{proof}
 Let $\cy$ be $\ct$ or $\cx^i$.
 Then $[\cDelta,\cy] = -2\cnabla\cy$ and $[\cDelta,\cnabla\cy] = 0$.
 Thus
 \begin{align}
  \label{eqn:Deltak-y-commutator} [ \cDelta^k , \cy ] & = -2k(\cnabla\cy)\cDelta^{k-1} , \\
  \notag \cy\cDelta^k\bigl( \cu(\cnabla\cy)\cv \bigr) & = \cDelta^k\bigl( \cu (\cy\cnabla\cy)\cv \bigr) + 2k\cDelta^{k-1}\bigl(\cnabla\cy\bigr)\bigl( \cu(\cnabla\cy)\cv \bigr) .
 \end{align}
 Observe that the infinitesimal generator of dilation is the Euler vector field
 \begin{equation*}
  X = \ct\partial_{\ct} + \sum_{i=0}^n\cx^i\partial_{\cx^i} = -\ct\cnabla\ct + \sum_{i=0}^n \cx^i\cnabla\cx^i .
 \end{equation*}
 Therefore $Q = \lv \cx \rv^2 - \ct\,{}^2$.
 On the one hand,
 \begin{align*}
  \sum_{i=0}^n \cx^i\ct[\cDelta^k,\cx^i](\cu) & = -2k\ct(X + \ct\cnabla\ct)\cDelta^{k-1}\cu , \\
  \sum_{i=0}^n (\cx^i)^2[\cDelta^k,\ct](\cu) & = -2k(Q + \ct^2)(\cnabla\ct)\cDelta^{k-1}\cu .
 \end{align*}
 This establishes Equation~\eqref{eqn:commutator-building-block-one-laplacian}.
 On the other hand,
 \begin{align*}
  \sum_{i=0}^n \cx^i\cDelta^k\bigl(\cu(\cnabla\cx^i)\cv\bigr) & = \sum_{i=0}^n \left( \cDelta^k\bigl(\cu(\cx^i\cnabla\cx^i)\cv\bigr) + 2k\cDelta^{k-1}\bigl(\cnabla\cx^i\bigr)\bigl(\cu(\cnabla\cx^i)\cv\bigr) \right) \\
   & = \cDelta^k(\cu X\cv) - 2k\cDelta^{k-1}\cdelta(\cu\,\cd\cv) + \ct\,\cDelta^k(\cu(\cnabla\ct\,)\cv) , \\
   \sum_{i=0}^n (\cx^i)^2\cDelta^k\bigl(\cu(\cnabla\ct)\cv\bigr) & = (Q + \ct^2)\cDelta^k\bigl(\cu(\cnabla\ct\,)\cv\bigr) .
 \end{align*}
 Combining this with Equation~\eqref{eqn:delta-to-Delta} yields Equation~\eqref{eqn:commutator-building-lbock-two-laplacian}.
\end{proof}

We illustrate how these lemmas recover known commutators.

Our first examples are the GJMS operators of Example~\ref{ex:gjms-operator}.

\begin{example}[GJMS operators (cf.\ \cite{Case2019fl}*{Theorem~1.3})]
 \label{ex:gjms-commutator}
 Let $L_{2k}^{d\theta^2}$ denote the GJMS operator of order $2k<n$ on the round $n$-sphere.
 Let $\cu \in \cmE\bigl[-\frac{n-2k+2}{2}\bigr]$ be a homogeneous extension of a function $u \in C^\infty(S^n)$.
 Lemmas~\ref{lem:ambient-commutator-computation} and~\ref{lem:commutator-building-blocks} yield
 \begin{align*}
  \sum_{i=0}^n x^i [ L_{2k},x^i](u) & = \sum_{i=0}^n \left. \cx^i\Bigl( \ct\bigl[\cDelta^k,\cx^i\bigr](\cu) - \cx^i\bigl[ \cDelta^k,\ct \bigr](\cu) \Bigr) \right|_{(S^n,d\theta^2)} \\
   & = \left. -2k\ct X \cDelta^{k-1}\cu \right|_{(S^n,d\theta^2)} .
 \end{align*}
 Since $\cDelta^{k-1}\cu \in \cmE\bigl[-\frac{n+2k-2}{2}\bigr]$ is a homogeneous extension of $L_{2k-2}u$, we see that
 \begin{equation*}
  \sum_{i=0}^n x^i[L_{2k},x^i] = k(n+2k-2)L_{2k-2} .
 \end{equation*}
 We conclude from Proposition~\ref{prop:factorization} that
 \begin{equation*}
  L_{2k} - \frac{n-2k}{4k}\sum_{i=0}^n x^i[L_{2k},x^i] = L_{2k} - \frac{(n-2k)(n+2k-2)}{4}L_{2k-2} = \Delta L_{2k-2}
 \end{equation*}
 is nonnegative.
 In particular, $L_{2k}$ satisfies the Frank--Lieb Property.
 \qed
\end{example}

Our next examples are the Ovsienko--Redou operators of Example~\ref{ex:ovsienko-redou-operator}.
These are easily treated using the commutator of a building block from Lemma~\ref{lem:case-yan}:

\begin{lemma}
 \label{lem:case-yan-commutator}
 Let $(\cmG,\cg)$ be the flat ambient space of the standard conformal $n$-sphere and let $\cf \in \cmE[w^\prime]$.
 Let $k \in \bN$ and $w \in \bR$.
 Then
 \begin{multline*}
  \sum_{i=0}^n x^i \bigl[D^{d\theta^2}_{2k,w,\cf},x^i\bigr]  = -k(w'-k+1)(n+2k-2)D_{2k-2,w-1,\cf} \\
   + 2k(k-1)\left(\frac{n-2k}{2}+w+w'\right)\left(\frac{n-2k}{2}+w\right)D_{2k-4,w-1,\cDelta\cf} .
 \end{multline*}
\end{lemma}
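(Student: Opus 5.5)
The plan is to compute the commutator directly on the flat ambient space $(\cmG,\cg)$ of $(S^n,\kc_0)$, in the same way as Lemma~\ref{lem:ambient-commutator-computation}. Write $D_{2k,w,\cf}$ for the operator induced by $\cD_{2k,w,\cf}$ in Equation~\eqref{eqn:case-yan-operator}, with coefficients $c_{a,b}$. Let $u \in C^\infty(S^n)$ and let $\cu \in \cmE[w-1]$ be a homogeneous extension. Then $\ct\cu$ and $\cx^i\cu$ lie in $\cmE[w]$ and extend $u$ and $x^iu$. By the first claim of Lemma~\ref{lem:case-yan}, $\iota^\ast\cD_{2k,w,\cf}$ depends only on $\iota^\ast$ of its argument. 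Hence
\begin{equation*}
 \sum_{i=0}^n x^i\bigl[D_{2k,w,\cf},x^i\bigr](u) = \sum_{i=0}^n \Bigl( \cx^i\ct\,\cD_{2k,w,\cf}(\cx^i\cu) - (\cx^i)^2\cD_{2k,w,\cf}(\ct\cu) \Bigr)\Bigr|_{(S^n,d\theta^2)} .
\end{equation*}
The goal is to rewrite the right-hand side as $\ct$ times ambient expressions of the form $\cD_{2k-2,w-1,\cf}(\cu)$ and $\cD_{2k-4,w-1,\cDelta\cf}(\cu)$, which are again tangential by Lemma~\ref{lem:case-yan}.

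For a fixed term $\cDelta^a(\cf\cDelta^b\,\cdot\,)$ with $\cy \in \{\ct,\cx^i\}$, I would commute $\cy$ to the outside using Equation~\eqref{eqn:Deltak-y-commutator} twice:
\begin{equation*}
 \cDelta^a\bigl(\cf\cDelta^b(\cy\cu)\bigr) = \cy\,\cDelta^a(\cf\cDelta^b\cu) - 2a(\cnabla\cy)\cDelta^{a-1}(\cf\cDelta^b\cu) - 2b\,\cDelta^a\bigl(\cf(\cnabla\cy)\cDelta^{b-1}\cu\bigr) .
\end{equation*}
Forming $\sum_i \cx^i(\ct\,\cdot\,|_{\cy=\cx^i} - \cx^i\,\cdot\,|_{\cy=\ct})$ kills the first term.

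For the second term, the identity $\sum_i \cx^i(\ct\cnabla\cx^i - \cx^i\cnabla\ct) = \ct X - Q\cnabla\ct$ from the proof of Lemma~\ref{lem:commutator-building-blocks} turns it into $-2a\ct X\cDelta^{a-1}(\cf\cDelta^b\cu)$, up to a term with a factor $Q$ in front. That term vanishes on restriction. Since $\cDelta^{a-1}(\cf\cDelta^b\cu)$ is homogeneous of weight $w+w'-2k+1$, this is a scalar multiple of $\ct\cDelta^{a-1}(\cf\cDelta^b\cu)$.

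For the third term, apply Equation~\eqref{eqn:commutator-building-lbock-two-laplacian} with $(\cu,\cv)$ replaced by $(\cf,\cDelta^{b-1}\cu)$ and $k$ replaced by $a$. The $Q$-term again drops on restriction. The $X$ acts on $\cDelta^{b-1}\cu$ by the scalar $w-2b+1$. The remaining term $\cDelta^{a-1}(\cf\cDelta^b\cu - \cDelta^{b-1}\cu\,\cDelta\cf)$ produces both $\cDelta^{a-1}(\cf\cDelta^b\cu)$ and $\cDelta^{a-1}\bigl((\cDelta\cf)\cDelta^{b-1}\cu\bigr)$. Collecting everything, the commutator restricts to $\ct$ times
\begin{equation*}
 \sum_{a'+b'=k-1}\alpha_{a',b'}\cDelta^{a'}(\cf\cDelta^{b'}\cu) + \sum_{a'+b'=k-2}\beta_{a',b'}\cDelta^{a'}\bigl((\cDelta\cf)\cDelta^{b'}\cu\bigr) .
\end{equation*}
Here $\alpha$ and $\beta$ are explicit combinations of $c_{a,b}$ with linear factors in $a$, $b$, $w$, $w'$, $n$, $k$. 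Since $\ct|_{(S^n,d\theta^2)}=1$, this restricts to the induced operators.

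The final and main step is the bookkeeping. I need to show that $\alpha_{a',b'}$ equals $-k(w'-k+1)(n+2k-2)$ times the coefficient of $\cDelta^{a'}(\cf\cDelta^{b'}\,\cdot\,)$ in $\cD_{2k-2,w-1,\cf}$. Likewise, $\beta_{a',b'}$ should be the stated multiple of the coefficients of $\cD_{2k-4,w-1,\cDelta\cf}$; note that $\cDelta\cf \in \cmE[w'-2]$. Writing the Gamma ratios in Equation~\eqref{eqn:case-yan-operator} as Pochhammer symbols $(\frac{n-2k}{2}+w)_a(-w-w'-\frac{n-2k}{2})_b$, the shifts $k\to k-1$ and $w\to w-1$ (and $w'\to w'-2$) leave the base parameters almost unchanged. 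Then each $\alpha_{a',b'}$ is a sum of two contributions, from $(a,b)=(a'+1,b')$ and $(a',b'+1)$. The identity should reduce to a two-term linear relation that I would verify by pulling out common Pochhammer factors. I expect this cancellation, in particular confirming that the $a$-dependence collapses to the constant prefactors in the statement, to be the only real obstacle. I would check it first in the cases $k=1,2$ before doing the general computation.
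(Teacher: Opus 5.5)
Your proposal is correct and is essentially the paper's own proof: the paper likewise combines Lemma~\ref{lem:ambient-commutator-computation}, Lemma~\ref{lem:commutator-building-blocks}, and Equation~\eqref{eqn:Deltak-y-commutator} to reach exactly your intermediate expression. That expression is the restriction of $-2\sum_{a+b=k}c_{a,b}\bigl(a(w+w'-2k-b+1)\cDelta^{a-1}(\cf\cDelta^b\cu)+b(w-k-b+1)\cDelta^a(\cf\cDelta^{b-1}\cu)+ab\,\cDelta^{a-1}\bigl((\cDelta\cf)\cDelta^{b-1}\cu\bigr)\bigr)$, which the paper then reindexes, and the bookkeeping you flag does close: with $\alpha=\frac{n-2k}{2}+w$ and $\beta=-w-w'-\frac{n-2k}{2}$ (unchanged for $D_{2k-2,w-1,\cf}$, shifted to $\alpha+1,\beta+1$ for $D_{2k-4,w-1,\cDelta\cf}$), the two-term bracket $(\alpha+a')(w+w'-2k-b'+1)+(\beta+b')(w-k-b')$ equals $\frac12(n+2k-2)(w'-k+1)$ for every $a'+b'=k-1$, while $ab\,(\alpha)_a(\beta)_b=\alpha\beta\,ab\,(\alpha+1)_{a-1}(\beta+1)_{b-1}$ supplies the factor $-\alpha\beta=\bigl(\frac{n-2k}{2}+w\bigr)\bigl(\frac{n-2k}{2}+w+w'\bigr)$ in the second coefficient.
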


\begin{proof}
 Lemma~\ref{lem:ambient-commutator-computation}, Lemma~\ref{lem:commutator-building-blocks}, and Equation~\eqref{eqn:Deltak-y-commutator} imply that
 \begin{multline*}
  \sum_{i=0}^n x^i [ D^{d\theta^2}_{2k,w,\cf} , x^i ](u) = -2\sum_{a+b=k} A_{a,b} \Bigl( a(w+w'-2k-b+1)\cDelta^{a-1}(\cf\cDelta^b\cu) \\
   + b(w-k-b+1)\cDelta^a(\cf\cDelta^{b-1}\cu) + ab\cDelta^{a-1}\bigl( (\cDelta\cf)\cDelta^{b-1}\cu \bigr) \Bigr) \Bigr|_{(S^n,d\theta^2)} ,
 \end{multline*}
 where
 \begin{equation*}
  A_{a,b} := \frac{k!}{a!b!} \frac{ \Gamma\bigl( \frac{n-2k}{2} + w + a \bigr) \Gamma\bigl( b - w - w' - \frac{n-2k}{2}\bigr)}{ \Gamma\bigl( \frac{n-2k}{2}+w \bigr) \Gamma\bigl( -w-w'-\frac{n-2k}{2}\bigr)} .
 \end{equation*}
 Reindexing and comparing to Equation~\eqref{eqn:case-yan-operator} yields the conclusion.
\end{proof}

\begin{example}[Ovsienko--Redou operators (cf.\ \cite{CaseLinYuan2022or}*{Theorem~1.5})]
 \label{ex:ovsienko-redou-commutator}
 Let $D_{2k}^{d\theta^2}$ denote the Ovsienko--Redou operator of homogeneity $-2k$ on the round $n$-sphere.
 Combining Equation~\eqref{eqn:or-to-cy} with Lemma~\ref{lem:case-yan-commutator} and simplifying the result yields
 \begin{equation*}
  \sum_{i=0}^n x^i[D^{d\theta^2}_{2k} , x^i]_1 = \frac{k(n+2k-2)(n+k-3)}{3}D_{2k-2}' ,
 \end{equation*}
 where $D_{2k-2}'$ is as in Example~\ref{ex:non-fsa}.
 From here one can directly verify that $(D_{2k},\{1\})$ satisfies the Frank--Lieb Property if $k \in \{ 1, 2 \}$;
 the cases $k \geq 3$ are open.
 \qed
\end{example}

Our third example is the $\sigma_2$-operator of Example~\ref{ex:sigma2-operator}.

\begin{example}[The $\sigma_2$-operator (cf.\ \cite{Case2019fl}*{Theorem~1.4})]
 \label{ex:sigma2-commutator}
 Let $D^{d\theta^2}$ be the $\sigma_2$-operator on the round $n$-sphere.
 Let $\cu \in \cmE\bigl[-\frac{n}{4}\bigr]$ and $\cv \in \cmE\bigl[-\frac{n-4}{4}\bigr]$ be homogeneous extensions of functions $u,v \in C^\infty(S^n)$.
 Using Equations~\eqref{eqn:Delta-on-three} and~\eqref{eqn:sigma2-operator-to-case-yan} and computing as in Example~\ref{ex:ovsienko-redou-commutator} yields
 \begin{equation*}
  \sum_{i=0}^n x^i[D,x^i](u \otimes v^{\otimes2}) = \frac{n-1}{24}uC(v^{\otimes 2}) ,
 \end{equation*}
 where $C$ is the conformally covariant operator defined holographically by
 \begin{align*}
  \cC(\cv^{\otimes2}) := n\cDelta\cv^2 - 8\cv\cDelta\cv
 \end{align*}
 on $\cmE\bigl[-\frac{n-4}{4}\bigr]^{\otimes2}$.
 The formula~\cite{FeffermanGraham2012}*{Equations~(3.6) and~(4.4)} for the straight and normal ambient metric implies that
 \begin{equation*}
  C(u \otimes v^{\otimes 2}) := n\Delta v^2 - 8v\Delta v + \frac{(n-4)^2}{2}J .
 \end{equation*}
 Direct computation implies that the constrained polydifferential operator $(D,\{C\})$ satisfies the Frank--Lieb Property~\cite{Case2019fl}*{Proof of Theorem~1.2}.
 \qed
\end{example}

We conclude with a brief comment on the $v_k$-operators of Example~\ref{ex:vk}.

\begin{example}[The $v_k$-operators]
 \label{ex:vk-commutator}
 It is not presently known if the $v_k$-operators satisfy the Frank--Lieb Property for some choice of constraint set.
 However, when restricted to locally conformally flat manifolds, Viaclovsky~\cite{Viaclovsky2000} classified critical points in the elliptic $k$-cones via an Obata-type argument.
\end{example}

%% file: examples.tex
\section{Geometric Aubin sets}
\label{sec:examples}

Examples of geometric Aubin sets are known for the GJMS operators and for the operators associated to the renormalized volume coefficients.
In the former case, one requires additional restrictions as the order of the operator grows.
In the latter case, one also requires the renormalized volume coefficient $v_k$ to coincide with the $\sigma_k$-curvature;
hence $k \leq 2$ or the manifold is locally conformally flat~\cite{BransonGover2008}.
One unifying feature of these sets is that they all involve positivity assumptions that can be understood in terms of the first nonlinear eigenvalue of $(D,\sC)$.

\begin{definition}
 \label{defn:spectrum}
 Let $(D,\sC)$ be a constrained polydifferential operator of rank $r$ and homogeneity $-2k>-n$ on $n$-manifolds.
 The \defn{first nonlinear eigenvalue} of $(D,\sC)$ on a compact Riemannian $n$-manifold $(M,g)$ is
 \begin{equation*}
  \lambda_{(D,\sC)}(M,g) := \inf \left\{ \kD^g\bigl(u^{\otimes r}\bigr) \suchthat u \in U_{\sC}^g , \lV u \rV_r = 1 \right\} .
 \end{equation*}
\end{definition}

We denote by $\lambda_D$ the first nonlinear eigenvalue of an unconstrained polydifferential operator $D$.

If $D$ has rank $2$, then $\lambda_D$ recovers the usual definition of the first eigenvalue.
If $(D,\sC)$ is as in Example~\ref{ex:sigma2-commutator}, then $\lambda_{(D,\sC)}$ recovers the nonlinear eigenvalue associated to the $\sigma_2$-curvature as studied by Ge and Wang~\cite{GeWang2006}*{Proposition~3} and by Ge, Lin, and Wang~\cite{GeLinWang2010}*{Lemma~3}.

The signs of the first nonlinear eigenvalue for $(D,\sC)$ and the $(D,\sC)$-Yamabe constant coincide under the assumption of a Sobolev-type inequality for $D$:

\begin{lemma}
 \label{lem:positivity-to-spectrum}
 Let $(D,\sC)$ be a constrained polydifferential operator of rank $r$ and homogeneity $-2k > -n$ on $n$-manifolds.
 Let $(M,g)$ be a compact Riemannian $n$-manifold for which there are constants $A,B >0$ such that
 \begin{equation}
  \label{eqn:sobolev-assumption}
  A\lV u \rV_{r^\ast}^r \leq \kD^g(u^{\otimes r}) + B\lV u \rV_r^r
 \end{equation}
 for all $u \in U_{\sC}^g$.
 Then the signs of $\lambda_{(D,\sC)}(M,g)$ and $Y_{(D,\sC)}(M,[g])$ coincide.
\end{lemma}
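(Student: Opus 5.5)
We compare the two infima through Hölder's inequality and the Sobolev-type inequality~\eqref{eqn:sobolev-assumption}, splitting into the cases where $\lambda_{(D,\sC)}(M,g)$ is negative, zero, or positive. Since $r < r^\ast$ and $M$ is compact, Hölder gives $\lV u \rV_r \leq \Vol(M)^{\frac{1}{r}-\frac{1}{r^\ast}}\lV u \rV_{r^\ast}$. Both quantities are infima over the same cone $U_{\sC}^g$, which by Lemma~\ref{lem:geometric-cone} is invariant under positive scaling. Hence, by homogeneity of degree $r$ of $\kD^g(u^{\otimes r})$, the sign of each infimum is determined by the sign of $\kD^g(u^{\otimes r})$ over $U_{\sC}^g\setminus\{0\}$; only the normalization differs. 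Throughout, we use Equation~\eqref{eqn:yamabe-defn-by-homogeneity} and its analogue for $\lambda_{(D,\sC)}$, namely the infimum of $\kD^g(u^{\otimes r})/\lV u \rV_r^r$.

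We first treat the case $\lambda_{(D,\sC)}(M,g)<0$. Then there is $u\in U_\sC^g$ with $\kD^g(u^{\otimes r})<0$, and hence $\mF^g(u)<0$, so $Y_{(D,\sC)}<0$; the converse holds by the same argument. Thus the two quantities are negative simultaneously. It therefore remains to show that, when both are nonnegative, one vanishes if and only if the other does.

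Next suppose $\lambda := \lambda_{(D,\sC)}(M,g) >0$. For $u\in U_\sC^g$ we have $\lV u \rV_r^r \leq \lambda^{-1}\kD^g(u^{\otimes r})$. Inserting this into~\eqref{eqn:sobolev-assumption} gives
\begin{equation*}
 A\lV u \rV_{r^\ast}^r \leq (1+B\lambda^{-1})\kD^g(u^{\otimes r}) ,
\end{equation*}
so $Y_{(D,\sC)}(M,[g]) \geq A/(1+B\lambda^{-1}) >0$. This is the only place where~\eqref{eqn:sobolev-assumption} is used, and it is the key step.

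Conversely, suppose $Y:=Y_{(D,\sC)}(M,[g])>0$. Then Hölder gives
\begin{equation*}
 \kD^g(u^{\otimes r}) \geq Y\lV u \rV_{r^\ast}^r \geq Y\Vol(M)^{-r\left(\frac1r-\frac1{r^\ast}\right)}\lV u \rV_r^r ,
\end{equation*}
so $\lambda_{(D,\sC)}>0$. Combining these two implications, $\lambda_{(D,\sC)}>0$ if and only if $Y_{(D,\sC)}>0$. Together with the negative case, this also yields that $\lambda_{(D,\sC)}=0$ if and only if $Y_{(D,\sC)}=0$.

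Finally, we must handle the degenerate case where $U_\sC^g$ is empty, so that both infima equal $+\infty$. We adopt the convention that both are then positive; with this convention the equivalences above still hold trivially. The only real obstacle in the argument is the positive-to-positive direction for $\lambda\Rightarrow Y$, and it is settled by the displayed estimate.
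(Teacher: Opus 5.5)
Your proof is correct and follows the paper's argument essentially step for step: the negative case by comparing after rescaling, $Y>0\Rightarrow\lambda>0$ via H\"older (your exponent $r(\frac1r-\frac1{r^\ast})=\frac{2k}{n}$ matches the paper's factor $\Vol(M)^{-2k/n}$), and $\lambda>0\Rightarrow Y>0$ by inserting $\lV u\rV_r^r\le\lambda^{-1}\kD^g(u^{\otimes r})$ into the Sobolev-type inequality. One small caution: your opening claim that the sign of each infimum is determined by the sign of $\kD^g(u^{\otimes r})$ on $U_{\sC}^g\setminus\{0\}$ only holds for the negative versus nonnegative dichotomy, since an infimum of positive values can be $0$ (which is exactly why the Sobolev assumption is needed); your actual argument does not rely on that claim.
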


\begin{proof}
 Clearly $\lambda_{(D,\sC)}(M,g)<0$ if and only if $Y_{(D,\sC)}(M,[g])<0$.
 
 Suppose that $Y_{(D,\sC)}(M,[g])>0$.
 Let $u \in U_{\sC}^g$ be such that $\lV u \rV_r=1$.
 H\"older's inequality implies that
 \begin{equation*}
  \kD^g(u^{\otimes r}) \geq Y_{(D,\sC)}(M,[g])\lV u \rV_{r^\ast}^r \geq Y_{(D,\sC)}(M,[g])\Vol(M)^{-2k/n} .
 \end{equation*}
 Therefore $\lambda_{(D,\sC)}(M,g) > 0$.
 
 Suppose instead that $\lambda_{(D,\sC)}(M,g) > 0$.
 By homogeneity,
 \begin{equation*}
  \kD^g(u^{\otimes r}) \geq \lambda_{(D,\sC)}(M,g) \lV u \rV_r^r
 \end{equation*}
 for all $u \in U_{\sC}^g$.
 Let $u \in U_{\sC}^g$ be such that $\lV u \rV_{r^\ast}=1$.
 Condition~\eqref{eqn:sobolev-assumption} implies that
 \begin{equation*}
  A \leq \left( 1 + \frac{B}{\lambda_{(D,\sC)}(M,g)} \right)\kD^g(u^{\otimes r}) .
 \end{equation*}
 Therefore $Y_{(D,\sC)}(M,[g])>0$.
\end{proof}

Note that Condition~\eqref{eqn:sobolev-assumption} was only used to prove that if $\lambda_{(D,\sC)}(M,g) > 0$, then $Y_{(D,\sC)}(M,[g])>0$.
As in Druet and Hebey's $AB$-Program~\cite{DruetHebey2002}, it is interesting to determine the best constants $A$ and $B$ in Condition~\eqref{eqn:sobolev-assumption}, when they exist.

We begin our discussion of examples with the conformal Laplacian, whose best geometric Aubin set comes from the resolution of the Yamabe Problem~\cite{LeeParker1987}:

\begin{example}[The conformal Laplacian]
 \label{ex:yamabe-set}
 Fix $n \geq 3$ and let $L_2$ denote the conformal Laplacian.
 The $L_2$-Yamabe constant is the usual Yamabe constant.
 Since a compact conformal $n$-manifold has positive Yamabe constant if and only if it admits a metric with positive scalar curvature~\cite{Schoen1989}*{Lemma~1.2}, the set
 \begin{equation*}
  \sA_{n,2} := \left\{ (M^n,\kc) \suchthat Y_{L_2}(M,\kc) > 0 \right\}
 \end{equation*}
 is closed under finite coverings.
 Results of Aubin~\cite{Aubin1976} and Schoen~\cite{Schoen1984} imply that $\sA_{n,2}$ is a geometric Aubin set for $L_2$.
 Applying Theorem~\ref{thm:compact-nonuniqueness} recovers nonuniqueness results of Schoen~\cite{Schoen1989} and of Bettiol and Piccione~\cite{BettiolPiccione2018}.
 \qed
\end{example}

In dimensions $n>4$, early partial solutions of the Yamabe problem for the Paneitz operator assume the positivity of the Yamabe constant and the existence of a metric with positive fourth-order $Q$-curvature~\cites{GurskyMalchiodi2014,HangYang2016}.
More recently, Gursky, Hang, and Lin~\cite{GurskyHangLin2016} showed that the positivity of $L_2$ and $L_4$ are sufficient to solve the $L_4$-Yamabe Problem in dimensions $n \geq 6$:

\begin{example}[The Paneitz operator]
 \label{ex:paneitz-set}
 Fix $n \geq 6$ and let $L_4$ denote the Paneitz operator.
 The $L_4$-Yamabe constant coincides with the Yamabe-type constant denoted $Y_4(M^n,g)$ by Gursky, Hang, and Lin~\cite{GurskyHangLin2016}*{Equation~(1.12)}.
 Their work, as shown by Andrade et al.~\cite{AndradeCasePiccioneWei2023}*{Proposition~3.2}, implies that
 \begin{equation*}
  \sA_{n,4} := \left\{ (M^n,\kc) \suchthat Y_{L_2}(M,\kc) , Y_{L_4}(M,\kc) > 0 \right\}
 \end{equation*}
 is a geometric Aubin set for $L_4$.
 Applying Theorem~\ref{thm:compact-nonuniqueness} to $\sA_{n,4}$ recovers nonuniqueness results of Bettiol, Piccione, and Sire~\cite{BettiolPiccioneSire2021}.
 \qed
\end{example}

It is expected that $\sA_{5,4}$ is also a geometric Aubin set~\cite{GurskyHangLin2016}*{p.\ 1532}.
The following conjecture asks whether the obvious generalization of Examples~\ref{ex:yamabe-set} and~\ref{ex:paneitz-set} is true (cf.\ \citelist{ \cite{AndradePiccioneWei2023}*{Conjecture~1} \cite{CaseGover2026}*{Conjecture~4.4}}).

\begin{conjecture}
 \label{conj:gjms-aubin-set}
 Fix $k,n \in \bN$ such that $n > 2k$.
 Then
 \begin{equation}
  \label{eqn:gjms-aubin-set}
  \sA_{n,2k} := \left\{ (M^n,\kc) \suchthat Y_{L_2}(M,\kc), \dotsc, Y_{L_{2k}}(M,\kc) > 0 \right\}
 \end{equation}
 a geometric Aubin set for $L_{2k}$.
\end{conjecture}

By a result of Mazumdar~\cite{Mazumdar2016}*{Theorem~3}, to verify Conjecture~\ref{conj:gjms-aubin-set}, one need only show that if $(M,\kc) \in \sA_{n,2k}$, then $L_{2k}$ satisfies the Strong Maximum Principle---i.e.\ if $L_{2k}u \geq 0$, then $u=0$ or $u>0$---and that $Y_{L_{2k}}(M,\kc) < Y_{L_{2k}}(S^n,\kc_0)$ unless $(M,\kc) = (S^n,\kc_0)$.
Beckner~\cite{Beckner1993}*{Theorem~6} identified the extremals of the constant $Y_{L_{2k}}(S^n,\kc_0)$;
they are also identified by the Frank--Lieb argument.
At present, the best partial solutions to Conjecture~\ref{conj:gjms-aubin-set} currently known require additional structural assumptions on the conformal manifold.
For example:

\begin{example}[The higher-order GJMS operators]
 \label{ex:gjms-set}
 Let $L_{2k}$ denote the GJMS operator of order $2k$.
 Results of Case and Malchiodi~\cite{CaseMalchiodi2023} imply that for each $m \in \bN \cup \{ 0 \}$ there is an $N = N(k,m)$ such that if $n \geq N(k,m)$, then
 \begin{equation*}
  \sA := \left\{ (M^n, \kc) \suchthat \text{$(M^n,\kc)$ is virtually conformal to an $m$-SEP} \right\}
 \end{equation*}
 is a geometric Aubin set for $L_{2k}$~\cite{AndradeCasePiccioneWei2023}*{Proposition~3.4}.
 Applying Theorem~\ref{thm:compact-nonuniqueness} in the case $k=3$ recovers a nonuniqueness result of Andrade, Piccione, and Wei~\cite{AndradePiccioneWei2023}.
 \qed
\end{example}

Here an \defn{$m$-SEP}, or $m$-special Einstein product~\cite{GoverLeitner2009}, is the Riemannian product of Einstein manifolds $(M_1^m,g_1)$ and $(M_2^{n-m},g_2)$ with $\Ric_{g_1} = -(m-1)g_1$ and $\Ric_{g_2} = (n-m-1)g_2$.
We say that $(M^n,\kc)$ is \defn{virtually} conformal to an $m$-SEP if there is a finite connected covering $\pi \colon \cM \to M$ for which there is a $\cg \in \pi^\ast\kc$ such that $(\cM,\cg)$ is isometric to an $m$-SEP.

A specialization of Conjecture~\ref{conj:gjms-aubin-set} is to the optimal dimension in Example~\ref{ex:gjms-set}:

\begin{conjecture}
 \label{conj:case-malchiodi}
 $N(k,m) = 2k + 2m - 1$.
\end{conjecture}

Case and Malchiodi~\cite{CaseMalchiodi2023}*{Lemma~4.3} verified Conjecture~\ref{conj:case-malchiodi} for locally conformally flat manifolds.
Otherwise, when $m>0$ the value of $N(k,m)$ produced by their proof is much larger than $2k+2m-1$.
If Conjecture~\ref{conj:gjms-aubin-set} is true, then a computation of Case and Malchiodi~\cite{CaseMalchiodi2023}*{Theorem~1.1} verifies Conjecture~\ref{conj:case-malchiodi}.

Work on the $\sigma_k$-curvature produces geometric Aubin sets for the operators associated to the renormalized volume coefficients~\cites{GuanWang2004,GeWang2006}.
We detail the case of the $\sigma_2$-curvature:

\begin{example}[The $\sigma_2$-operator]
 \label{ex:sigma2-set}
 Fix $n \geq 5$, let $D$ denote the $\sigma_2$-operator, and let $C$ be as in Example~\ref{ex:sigma2-commutator}.
 Set $\sC_1 := \{ C \}$ and $\sC_2 := \{ C, D \}$.
 The positive elliptic $1$- and $2$-cones are $U_{\sC_1}$ and $U_{\sC_2}$, respectively.
 Ge, Lin, and Wang~\cite{GeLinWang2010}*{Theorem~1} showed that if $(M,\kc)$ is a compact conformal $n$-manifold with positive Yamabe constant and $\lambda_{(D,\sC_1)}>0$, then $U_{\sC_2}^g \not= \emptyset$ for each $g \in \kc$.
 Combining this with an existence result of Guan and Wang~\cite{GuanWang2004}*{Theorem~1(A)} implies that the set
 \begin{equation*}
  \sA' := \left\{ (M,\kc) \suchthat W = 0 , Y_{L_2}(M,\kc) > 0 , Y_{(D,\sC_1)}(M,\kc) > 0 \right\}
 \end{equation*}
 of compact, locally conformally flat $n$-manifolds with positive $L_2$- and $(D,\sC_1)$-Yamabe constants is a geometric Aubin set for $(D,\sC_2)$.
 Combining this instead with a result of Ge and Wang~\cite{GeWang2006}*{Proposition~1} when $n \geq 9$ implies that the set
 \begin{equation*}
  \sA'' := \left\{ (M,\kc) \suchthat Y_{L_2}(M,\kc), Y_{(D,\sC_1)}(M,\kc) > 0 \right\}
 \end{equation*}
 of compact conformal $n$-manifolds with positive $L_2$- and $(D,\sC_1)$-Yamabe constants is a geometric Aubin set for $(D,\sC_2)$.
 Applying Theorem~\ref{thm:compact-nonuniqueness} to these sets establishes the nonuniqueness of metrics with constant $\sigma_2$-curvature in all dimension $n \geq 5$, generalizing an observation of Viaclovsky~\cite{Viaclovsky2000}*{Section~5} for $S^1 \times S^{n-1}$.
 \qed
\end{example}

Ge and Wang~\cite{GeWang2025} announced that $\sA''$ is a geometric Aubin set for $(D,\sC_1)$ in all dimensions $n \geq 5$.

We conclude by commenting on the $v_k$-operators:

\begin{example}[The $v_k$-operators]
 \label{ex:vk-set}
 Locally conformally flat manifolds with nonempty positive $k$-cone form a geometric Aubin set for the $v_k$-operators~\cite{GuanWang2004}.
\end{example}

%% file: nonuniqueness.tex
\section{A stronger nonuniqueness result}
\label{sec:nonuniqueness}

Theorems~\ref{thm:compact-nonuniqueness} and~\ref{thm:noncompact-nonuniqueness} are true for a weaker notion of geometric Aubin sets that assumes only the existence of critical points of the corresponding constrained functional with uniformly bounded energy.
We present the generalization of Theorem~\ref{thm:compact-nonuniqueness} that, for concreteness, constructs geometrically distinct solutions of
\begin{equation}
 \label{eqn:normalized-pde}
 D^g(u^{\otimes\ell}) = \lv u \rv^{\frac{(r-2)n+4k}{n-2k}}u .
\end{equation}
Combining this with the covering argument of Andrade et al.~\cite{AndradeCasePiccioneWei2023}*{Lemma~4.2} gives the generalization of Theorem~\ref{thm:noncompact-nonuniqueness}.

\begin{theorem}
 \label{weaker-nonuniqueness}
 Let $(D,\sC)$ be a constrained polydifferential operator of rank $r$ and homogeneity $-2k>-n$ on $n$-manifolds.
 Suppose that $\sA$ is a set of compact conformal $n$-manifolds such that
 \begin{enumerate}
  \item if $(M,\kc) \in \sA$, then $Y_{(D,\sC)}(M,\kc) > 0$;
  \item there is a constant $\Lambda>0$ such that if $(M,\kc) \in \sA$ and $g \in \kc$, then there is a solution $u \in U_{\sC}^g$ of PDE~\eqref{eqn:normalized-pde} for which $\lV u \rV_{r^\ast} \leq \Lambda$; and
  \item if $(M,\kc) \in \sA$ and $\pi \colon \cM \to M$ is a finite covering, then $(\cM,\pi^\ast\kc) \in \sA$.
 \end{enumerate}
 Let $(M,\kc) \in \sA$.
 If $\pi_1(M)$ has infinite profinite completion, then for each $m \in \bN$ there is a finite regular covering $\pi \colon \cM \to M$ such that for any $g \in \kc$ there are at least $m$ pairwise geometrically distinct solutions of PDE~\eqref{eqn:pde} in $U_{\sC}^{\pi^\ast g}$.
\end{theorem}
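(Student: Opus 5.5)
We follow the covering argument of Bettiol--Piccione and Andrade et al., replacing minimizers by the bounded-energy solutions of hypothesis~(ii). The key quantity is the energy $E(u) := \kD^g(u^{\otimes r})$ of a solution of PDE~\eqref{eqn:normalized-pde}. Pairing that equation with $u$ gives $E(u) = \lV u \rV_{r^\ast}^{r^\ast}$. Hence, for any solution $u \in U_\sC^g\setminus\{0\}$,
\begin{equation*}
 Y_{(D,\sC)}(M,\kc) \leq \mF^g(u) = \lV u \rV_{r^\ast}^{r^\ast-r} .
\end{equation*}
This yields a positive lower bound $E(u) \geq Y_{(D,\sC)}(M,\kc)^{n/(2k)}$, since $r^\ast/(r^\ast-r) = n/(2k)$. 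Hypothesis~(ii) yields an upper bound $E(u) \leq \Lambda^{r^\ast}$ for the chosen solutions, uniform over $\sA$.

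\textbf{Step 1: energies of pullbacks.} If $\pi\colon\cM\to M$ is a covering of degree $d$ and $u$ solves PDE~\eqref{eqn:normalized-pde} on $(M,g)$, then by naturality $\pi^\ast u$ solves it on $(\cM,\pi^\ast g)$, with $E(\pi^\ast u) = d\,E(u)$. By Lemma~\ref{lem:geometric-cone}(iii), $\pi^\ast u \in U_\sC^{\pi^\ast g}$. Energy is preserved by the conformal action of Lemma~\ref{lem:action}, because the Dirichlet form is conformally invariant with this weight and $\Conf$ preserves $\lV\cdot\rV_{r^\ast}$. Consequently, solutions with different energies are geometrically distinct.

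\textbf{Step 2: tower construction.} Since $\pi_1(M)$ has infinite profinite completion, we use a tower of finite regular coverings $\cdots\to M_3\to M_2\to M_1\to M$, each step of degree at least two, with total degrees $d_j\to\infty$. We may take the coverings $M_j\to M$ regular by passing to normal cores. By~(iii), each $M_j$ lies in $\sA$. For each $j$, hypothesis~(ii) gives a solution $u_j$ on $(M_j,\pi_j^\ast g)$ with $E(u_j)\leq \Lambda^{r^\ast}$. Pulling $u_j$ back to $M_N$ for $N \geq j$ gives a solution with energy $(d_N/d_j)E(u_j)$. This lies in the interval
\begin{equation*}
 \left[ (d_N/d_j)\,Y_{(D,\sC)}(M_j)^{n/2k},\ (d_N/d_j)\,\Lambda^{r^\ast} \right] .
\end{equation*}

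\textbf{Step 3: separating energies.} We need the lower bound to persist uniformly along the tower. Here $Y_{(D,\sC)}(M_j)$ may decrease as $j$ grows, but it is bounded below by something controlled. The main obstacle is that hypothesis~(i) gives positivity but no uniform bound. I would argue as Andrade et al.\ do. Fix $m$ and choose indices $j_1 < \dots < j_m$ with $d_{j_{i+1}}/d_{j_i}$ huge. The pulled-back energies on $M_N$ are then $d_N E(u_{j_i})/d_{j_i}$. Each is at most $d_N\Lambda^{r^\ast}/d_{j_i}$, and at least $d_N Y(M_{j_i})^{n/2k}/d_{j_i}$.

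To force these intervals to be disjoint, I would exploit the monotonicity $Y(M_{j'}) \leq d_{j'}^{2k/n}\,Y(M_j)$ coming from pullback test functions. I would then try instead to pull back the \emph{minimizing-energy} data downward. If that fails, I would choose $j_{i+1}$ inductively, given $j_i$, so that $d_{j_{i+1}}/d_{j_i} > \Lambda^{r^\ast}/Y(M_{j_i})^{n/2k}$. This makes $E(u_{j_i})\,d_N/d_{j_i}$ strictly greater than $\Lambda^{r^\ast} d_N/d_{j_{i+1}} \geq E(u_{j_{i+1}})\,d_N/d_{j_{i+1}}$. The choice is possible because $d_j\to\infty$ and only $Y(M_{j_i})$, a quantity already fixed, enters the inequality.

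\textbf{Conclusion.} The $m$ pulled-back solutions on $\cM := M_{j_m}$ have distinct energies, so they are pairwise geometrically distinct. Rescaling by constants converts solutions of PDE~\eqref{eqn:normalized-pde} into solutions of PDE~\eqref{eqn:pde}, and the cone property keeps them in $U_\sC^{\pi^\ast g}$. Since $g \in \kc$ was arbitrary and the covering was chosen independently of $g$, this holds for every $g\in\kc$.

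However, the inductive choice in Step~3 involves $Y(M_{j_i})$, which is conformally invariant and hence independent of $g$. So the covering does not depend on $g$.
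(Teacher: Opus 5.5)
Your argument is correct and is essentially the paper's proof. Your inductive condition $d_{j_{i+1}}/d_{j_i} > \Lambda^{r^\ast}Y_{(D,\sC)}(M_{j_i})^{-n/(2k)}$ is exactly the paper's degree condition on $\pi_{j+1}$, and your energies $\kD(u^{\otimes r}) = \lV u\rV_{r^\ast}^{r^\ast}$ play the role of the paper's $\mF$-values, which equal their $2k/n$ powers. The only differences are cosmetic: you pass to a subsequence of a fixed tower made regular via normal cores, whereas the paper chooses each covering adaptively; the exploratory asides in Step~3 (monotonicity of $Y$ along the tower, ``pulling data downward'') can be deleted; and no rescaling is needed at the end, since solutions of PDE~\eqref{eqn:normalized-pde} already solve PDE~\eqref{eqn:pde} for a specific $\lambda$.
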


\begin{proof}
 We claim that there is an infinite tower of finite regular coverings
 \begin{equation*}
  \dotsm \overset{\pi_4}{\longrightarrow} \cM_3 \overset{\pi_3}{\longrightarrow} \cM_2 \overset{\pi_2}{\longrightarrow} \cM_1 := M
 \end{equation*}
 and a sequence of solutions $u_j \in U_{\sC}^{\Pi_j^\ast g}$ of PDE~\eqref{eqn:normalized-pde} with $\lV u_j \rV_{r^\ast} \leq \Lambda$ such that
 \begin{equation}
  \label{eqn:key-estimate}
  \mF^{\Pi_j^\ast g}(u_j) < \mF^{\Pi_{j}^\ast g}(\pi_j^\ast u_{j-1})
 \end{equation}
 for each $j \in \bN$, where
 \begin{equation*}
  \Pi_j := \pi_2 \circ \dotsm \circ \pi_j \colon \cM_j \to \cM_1 ,
 \end{equation*}
 with the convention $\Pi_1 := \Id$.
 If true, then Inequality~\eqref{eqn:key-estimate} implies that
 \begin{equation*}
  \mF^{\Pi_m^\ast g}(u_m) < \mF^{\Pi_m^\ast g_m}(\pi_m^\ast u_{m-1}) < \mF^{\Pi_m^\ast g_m}(\pi_m^\ast \pi_{m-1}^\ast u_{m-2}) < \dotsm < \mF^{\Pi_m^\ast g}(\Pi_m^\ast u_1) .
 \end{equation*}
 Therefore $\{ u_m , \pi_m^\ast u_{m-1} , \pi_m^\ast \pi_{m-1}^\ast u_{m-2} , \dotsc , \Pi_m^\ast u_1 \}$ are geometrically distinct solutions of PDE~\eqref{eqn:normalized-pde} on $(\cM_m,\Pi_m^\ast g)$.
 
 We now prove the claim.
 To that end, observe that if $(\hM,\widehat{\kc}) \in \sA$ and $\widehat{u} \in U_{\sC}^{\hg}$, $\hg \in \widehat{\kc}$, is a solution of PDE~\eqref{eqn:normalized-pde} with $\lV \widehat{u} \rV_{r^\ast} \leq \Lambda$, then
 \begin{equation}
  \label{eqn:energy-estimate}
  \mF^{\hg}(\widehat{u}) = \lV \widehat{u} \rV_{r^\ast}^{\frac{2kr}{n-2k}} \leq \Lambda^{\frac{2kr}{n-2k}} .
 \end{equation}

 Since $(\cM_1,\Pi_1^\ast\kc) \in \sA$, there is a solution $u_1 \in U_{\sC}^{\Pi_1^\ast g}$ of PDE~\eqref{eqn:normalized-pde} such that $\lV u_1 \rV_{r^\ast} \leq \Lambda$.
 Suppose that a finite connected covering $\Pi_j \colon \cM_j \to \cM_1$ and a solution $u_j \in U_{\sC}^{\Pi_j^\ast g}$ of PDE~\eqref{eqn:normalized-pde} with $\lV u_j \rV_{r^\ast} \leq \Lambda$ are given.
 Conditions~(i) and~(iii) yield
 \begin{equation*}
  \mF^{\Pi_j^\ast g}(u_j) \geq Y_{(D,\sC)}(\cM_j,\Pi_j^\ast\kc) > 0 .
 \end{equation*}
 Since $\pi_1(M)$ has infinite profinite completion, so too does $\cM_j$.
 Pick a finite connected covering $\pi_{j+1} \colon \cM_{j+1} \to \cM_j$ of degree
 \begin{equation*}
  d_{j+1} > \Lambda^{\frac{rn}{n-2k}} \bigl( Y_{(D,\sC)}(\cM_j,\Pi_j^\ast\kc) \bigr)^{-\frac{n}{2k}} .
 \end{equation*}
 Set $\Pi_{j+1} := \Pi_j \circ \pi_{j+1}$.
 On the one hand, direct computation yields
 \begin{equation}
  \label{eqn:covered-energy-estimate}
  \mF^{\Pi_{j+1}^\ast g}(\pi_{j+1}^\ast u_j) = d_{j+1}^{\frac{2k}{n}}\mF^{\Pi_j^\ast g}(u_j) > \Lambda^{\frac{2kr}{n-2k}} .
 \end{equation}
 On the other hand, Condition~(ii) implies that there is a solution $u_{j+1} \in U_{\sC}^{\Pi_{j+1}^\ast g}$ of PDE~\eqref{eqn:normalized-pde} with $\lV u_{j+1} \rV_{r^\ast} \leq \Lambda$.
 Combining Inequalities~\eqref{eqn:energy-estimate} and~\eqref{eqn:covered-energy-estimate} yields
 \begin{equation*}
  \mF^{\Pi_{j+1}^\ast g}(u_{j+1}) \leq \Lambda^{\frac{2kr}{n-2k}} < \mF^{\Pi_{j+1}^\ast g}(\pi_{j+1}^\ast u_j) . \qedhere
 \end{equation*}
\end{proof}

%% file: future.tex
\section{Future directions}
\label{sec:future}

The applicability of Theorems~\ref{thm:compact-nonuniqueness} and~\ref{thm:noncompact-nonuniqueness} depends on being able to identify geometric Aubin sets.
Doing so is arguably the most important task in the general study of formally self-adjoint, conformally covariant, polydifferential operators\footnote{
 \label{cr-footnote}
 It would also be interesting to develop the parallel theory in CR geometry, building on the analogy between the Yamabe Problem and the CR Yamabe Problem~\cite{JerisonLee1987}.
 A theory that includes ``primed operators'', such as the $P^\prime$-operator~\cites{CaseYang2012,Hirachi2013} and multiplication by the $I^\prime$-curvatures~\cites{CaseGover2013,CaseTakeuchi2019,Marugame2019,Marugame2020}, is also of interest.}.
We conclude this paper with a series of objectives that aim to complete this task.

The $(D,\sC)$-Yamabe Problem is an analytic problem on manifolds, and as such requires detailed knowledge of the operator $D$.
While Theorem~\ref{thm:cly} constructs formally self-adjoint, conformally covariant, polydifferential operators, it says little about their analytic properties.
Our first objective aims to address this issue by classifying Weyl operators.

\begin{objective}
 \label{o:classification}
 Show that every conformally covariant polydifferential operator of homogeneity $-2k > -n$ on $n$-manifolds is a Weyl operator.
\end{objective}

There are two partial results towards Objective~\ref{o:classification}.
First, a result of Bailey, Eastwood, and Graham~\cite{BaileyEastwoodGraham1994} achieves Objective~\ref{o:classification} for operators of rank $1$.
Second, Alexakis~\cite{Alexakis2003} showed, by adapting the arguments of Bailey et al., that any conformally covariant polydifferential operator of homogeneity $-2k > -n$ on $n$-manifolds can be realized by applying an ambient operator to a \emph{formally harmonic} extension of its input.
One possible approach to Objective~\ref{o:classification} is to take an operator $D$ produced by Alexakis' result, compute its commutators with $Q$, and compare with Equation~\eqref{eqn:sl2-algebra} to find a tangential modification $D'$ of $D$ by adding a suitable operator with $\cDelta$ as a right factor.

Solving the $(D,\sC)$-Yamabe Problem requires identifying constraints.
Example~\ref{ex:sigma2-commutator} indicates the role of the Frank--Lieb Property in this task.

\begin{objective}
 \label{o:commutator}
 Classify Weyl operators for which there is a finite set $\sC$ of polydifferential operators such that $(D,\sC)$ satisfies the Frank--Lieb Property.
\end{objective}

The focus on Weyl operators is motivated by the observations of Section~\ref{sec:ambient} and Section~\ref{sec:frank-lieb} that formulas for $D$ and $\sum x^i[D,x^i]$, respectively, are straightforward to compute holographically.
It is not presently clear how best to verify Condition~\eqref{eqn:fl-poincare}.
One expects that the elements of $\sC$ are Weyl operators with rank at most that of $D$ and homogeneity strictly greater than that of $D$.

As discussed in Remark~\ref{rk:fl-regularity}, the Frank--Lieb Property sometimes establishes the regularity of solutions of PDE~\eqref{eqn:pde}.
Our next objective seeks the right space in which to construct minimizers.

\begin{objective}
 \label{o:existence}
 Determine which constrained polydifferential operators $(D,\sC)$ as in Objective~\ref{o:commutator} have the property that there is a completion $\sB^{d\theta^2}$ of $U_{\sC}^{d\theta^2}$ for which there is a $u$ in the interior $\Int\sB^{d\theta^2}$ of $\sB^{d\theta^2}$ satisfying
 \begin{align*}
  \kD^{d\theta^2}(u^{\otimes r}) & = Y_{(D,\sC)}(S^n,\kc_0) , \\
  \lV u \rV_{r^\ast} & = 1 .
 \end{align*}
\end{objective}

Ideally, the topology on $\sB$ will be sufficiently strong that one can deduce, as in Remark~\ref{rk:fl-regularity}, that $u$ is in fact smooth.
For the unconstrained GJMS operator $L_{2k}$, Objective~\ref{o:existence} is achieved with $\sB = W^{k,2}$.

One should look to achieve Objective~\ref{o:existence} using a ``natural'' completion.
A first check that the right choice has been made is to adapt Aubin's existence result~\cite{Aubin1976} to constrained polydifferential operators.

\begin{objective}
 \label{o:general-existence}
 Determine which constrained polydifferential operators $(D,\sC)$ as in Objective~\ref{o:commutator} have the property that there is an assignment $\sB$ to each compact Riemannian $n$-manifold $(M,g)$ of a completion $\sB^g$ of $U_{\sC}^{g}$ such that
 \begin{enumerate}
  \item if $\hg = e^{2\Upsilon} g$, then $\sB_{\sC}^{\hg} = e^{\frac{n-2k}{r}\Upsilon}\sB_{\sC}^g$;
  \item if $\pi \colon \hM \to M$ is a finite covering, then $\pi^\ast(\sB^g) \subseteq \sB^{\pi^\ast g}$; and
  \item if $Y_{(D,\sC)}(M,[g]) < Y_{(D,\sC)}(S^n,\kc_0)$, then there is a $u \in \Int \sB^g$ satisfying
  \begin{align*}
   \kD^{g}(u^{\otimes r}) & = Y_{(D,\sC)}(M,[g]) , \\
   \lV u \rV_{r^\ast} & = 1 .
  \end{align*}
 \end{enumerate}
\end{objective}

This represents the first step towards finding a geometric Aubin set for $(D,\sC)$.
The next step requires proving regularity, and is probably best considered first in two special cases.
First are the GJMS operators, where the missing ingredient is the Strong Maximum Principle;
cf.\ Conjecture~\ref{conj:gjms-aubin-set}.
Second are the $v_k$-operators, as the corresponding operator is second order~\cite{Graham2009}*{Theorem~1.1} and the requisite regularity theory has been worked out in the locally conformally flat case~\cites{GuanWang2003,ShengTrudingerWang2007}.